\spnewtheorem{ass}{Assumption}{\bf}{\rm}
\newcommand{\vr}{{\varrho}}
\newcommand{\ve}{{\varepsilon}}
\newcommand{\rmd}{{\rm d}}
\newcommand{\bx}{{ \mathbf{x} }}
\newcommand{\br}{{\mathbf{r}}}
\newcommand{\bq}{{\mathbf{q}}}
\newcommand{\bu}{{ \mathbf{v}}}
\newcommand{\bJ}{{ \mathbf{J}}}
\newcommand{\bv}{{\mathbf{u}}}
\newcommand{\bj}{{\mathbf{j}}}
\newcommand{\bT}{\mathbf{T}}
\newcommand{\bS}{{\mathbf{S}}}
\newcommand{\bI}{{\mathbf{I}}}
\newcommand{\bF}{{\bf F}}
\newcommand{\be}{\begin{equation}}
\newcommand{\ee}{\end{equation}}
\newcommand{\bzed}{{\bf 0}}
\newcommand{\cG}{{\mathcal{G}}}
\newcommand{\cO}{{\mathcal{O}}}
\newcommand{\Dlim}{{\mathcal{D}'}\mbox{-}\lim}
\newcommand{\nj}{{\mbox{\boldmath $\j$}}}
\newcommand{\diver}{{\rm div}_x}
\newcommand{\ol}[1]{\mkern 1.5mu\overline{\mkern-1.5mu#1\mkern-1.5mu}\mkern 1.5mu}
\newcommand{\ul}[1]{\mkern 1.5mu\underline{\mkern-1.5mu#1\mkern-1.5mu}\mkern 1.5mu}
\newcommand{\Dto}{{\stackrel{\mathcal{D}'}{\longrightarrow}}}
\newenvironment{customthm}[1]
  {\innercustomthm}
  {\endinnercustomthm}
\journalname{Communications in Mathematical Physics}
\begin{document}

\title{{An Onsager Singularity Theorem for Turbulent Solutions of Compressible Euler Equations}}
\titlerunning{Onsager Singularity Theorem}
\author{Theodore D. Drivas\inst{1} and Gregory L. Eyink \inst{1}\fnmsep\inst{2}}
\institute{Department of Applied Mathematics \& Statistics\\ The Johns Hopkins University, Baltimore, MD 21218, USA\\ \email{tdrivas2@jhu.edu} \and 
 Department of Physics and Astronomy \\ The Johns Hopkins University, Baltimore, MD 21218, USA\\
 \email{eyink@jhu.edu}}
\authorrunning{T. Drivas \& G. Eyink}

\date{\today}
\communicated{???}

\maketitle
\begin{abstract}
We prove that bounded weak solutions of the compressible Euler equations will conserve thermodynamic entropy 
unless the solution fields have sufficiently low space-time Besov regularity. A quantity measuring kinetic energy cascade 
will also vanish for such Euler solutions, unless the same singularity conditions are satisfied. It is shown furthermore that strong 
limits of solutions of compressible Navier-Stokes equations that are bounded and exhibit anomalous dissipation are 
weak Euler solutions. These inviscid limit solutions have non-negative anomalous entropy production and 
kinetic energy dissipation, with both vanishing when solutions are above the critical degree of Besov regularity.  
 Stationary, planar shocks in Euclidean space with an ideal-gas equation of state provide simple examples 
 that satisfy the conditions of our theorems and which demonstrate sharpness of our $L^3$-based conditions.
These conditions involve space-time Besov regularity, but we show that they are satisfied by 
Euler solutions that possess similar space regularity uniformly in time.
\end{abstract}


\section{Introduction}

In a 1949 paper on turbulence in incompressible fluids \cite{Onsager49}, L. Onsager announced a result that spatial H\"older exponents $\leq 1/3$ are required of the velocity field for anomalous turbulent dissipation (that is, energy dissipation non-vanishing in the limit of zero viscosity). His sketched argument involved the idea that the velocity field in the limit of infinite Reynolds number is a weak (distributional) solution of the incompressible Euler equations. Onsager never published a detailed proof of his singularity theorem, but works of Eyink \cite{eyink1994energy}, Constantin et al. \cite{CET1994}, and Duchon \& Robert \cite{DuchonRobert2000}, among others later, proved Onsager's claimed result and even more precise results. Onsager's own unpublished argument was essentially the same as that given in \cite{DuchonRobert2000}, according to the historical evidence \cite{EyinkSreenivasan06}. More recent mathematical work has established existence of dissipative weak Euler solutions of the type conjectured by Onsager, beginning with pioneering work of DeLellis \& Sz\'ekelyhidi, Jr. \cite{DeLellisSzekelyhidi10,de2012h} on the convex integration approach, that has since culminated in constructions of solutions with the critical $1/3$ regularity \cite{Buckmaster13,isett2016proof}. None of these theorems establish that dissipative Euler solutions exist as the zero-viscosity limits of incompressible Navier-Stokes solutions, necessary to rigorously found Onsager's theory for fluid turbulence from first principles. 

In this paper, we prove an Onsager singularity theorem for weak solutions of the compressible Euler equations in arbitrary 
space-dimension $d\geq 1.$  The basic state variables are the mass density $\vr:= \vr(\bx,t)$, fluid velocity $\bu:= \bu(\bx,t)$ 
and internal energy density $u:= u(\bx,t)$ (or specific internal energy $u_m=u /\vr$), with the latter defined 
implicitly by the relation $E:= \frac{1}{2}\vr |\bu|^2+u$ in terms of the total energy density $E$. The Euler system then consists of the 
$d+2$ dynamical equations expressing conservation of mass, momentum and energy:
\begin{eqnarray} \label{E-eq:rho}
\partial_t \vr + \nabla_x\!  &\cdot & \!(\vr\bu) = 0 , \\ \label{E-eq:u}
\partial_t (\vr \bu) + \nabla_x \!  &\cdot & \!\left(\vr \bu \bu +p\bI\right)  = 0, \\
\label{E-eq:TE}
\partial_t E+ \nabla_x \!  &\cdot & \!\left((p+E)\bu \right)  =0.
\end{eqnarray}
We use the ``dyadic product" notation $\bu\bu$ of J. W. Gibbs for the tensor product $\bu\otimes\bu $
of space-vectors, which is convenient in this paper.  
The pressure is given by a \emph{thermodynamic equation of state} $p:=p(u,\vr)$ as a function 
of $u$ and $\vr.$ A previous paper \cite{feireisl2016regularity} has studied a similar problem, but 
under the assumption of a barotropic equation of state, with pressure $p=p(\vr)$ a function only of mass 
density and with no independent equation for the total energy density $E.$ Our results are valid for a general 
equation of state $p(u,\vr),$ assuming only that the fluid undergoes no phase transitions during its evolution 
(see Assumption \ref{smoothAss} for a more precise statement).  We also consider strong limits of solutions 
of the compressible Navier-Stokes equations for Reynolds and P\'eclet numbers tending to infinity. As we shall show, 
such strong limits are weak solutions of the compressible Euler system (\ref{E-eq:rho})--(\ref{E-eq:TE}). 
This is a subclass of all Euler solutions, but arguably the one most relevant to compressible fluid turbulence. 

In order to state precisely our results, recall that the Navier-Stokes-Fourier system (or, simply, the compressible 
Navier-Stokes equations) for a viscous, heat-conducting fluid 
takes the form: 
\begin{eqnarray} \label{eq:rho}
\partial_t \vr + \nabla_x   \!&\cdot &\!  (\vr\bu) = 0, \\ \label{eq:u}
\partial_t (\vr \bu) + \nabla_x   \!&\cdot &\!   \left(\vr \bu \bu +p\bI +\bT\right)  = 0 ,\\
\label{eq:TE}
\partial_t E+ \nabla_x   \!&\cdot &\!   \left((p+E)\bu +\bT\cdot \bu + \bq \right)  =0.
\end{eqnarray}
 The \emph{viscous stress tensor}  $\bT$ is given by \emph{Newton's rheological law}:
\be\nonumber
\bT := -2  \eta\bS -\zeta\Theta\bI \ \ \ {\rm with} \ \ \  \bS  := \frac{1}{2}\left(\nabla_x\bu + (\nabla_x\bu)^\top-\frac{2}{d} \Theta\bI\right)\ \ \ {\rm and} \ \ \ \Theta:= \diver\bu,
\ee
where $\eta:= \eta(u,\vr) >0$ and $\zeta:= \zeta(u,\vr)>0$ represent the shear and bulk viscosity, respectively.  
The \emph{heat flux} $\bq$ is given by  \emph{Fourier's law}:
\be\label{eq:q}
\bq := -\kappa\nabla_xT,
\ee
with thermal conductivity $\kappa:=\kappa(u,\vr)>0,$ where $T:=T(u,\vr)$ is the \emph{temperature} of the fluid. For this system, 
see standard physics texts such as Landau \& Lifshitz 
\cite{LandauLifshitz87} (\S 49) or de Groot \& Mazur \cite{deGrootMazur84}, (Ch. XII, \S 1),  and, in the 
mathematics literature, Gallavotti \cite{gallavotti2013foundations} (\S 1.1), Feireisl 
\cite{feireisl2004dynamics,feireisl2013inviscid} or Lions \cite{lions1998mathematical}. 
Balance equations of kinetic energy density and internal energy density follow straightforwardly for smooth 
solutions of the system (\ref{eq:rho})--(\ref{eq:TE}). The equations for kinetic and internal energy densities are:
\begin{eqnarray} \label{eq:KE}
\partial_t \left(\frac{1}{2}\vr |\bu|^2\right) + \nabla_x   \!&\cdot &\!    \left(\left(p+\frac{1}{2}\vr |\bu|^2 \right)\bu +\bT\cdot \bu \right)  = p \ \Theta-Q, \\  \label{eq:IE}
\partial_t  u + \nabla_x  \!&\cdot &\!   \left( u\bu+\bq\right)  = Q  - p\  \Theta, 
\end{eqnarray}
where the rate of viscous heating of the fluid is explicitly:
\be\label{eq:Q}
Q :=-\bT: \nabla_x\bu=2\eta|\bS|^2 +\zeta \Theta^2.
\ee

An essential role will be played in our analysis by the {\it thermodynamic entropy}.
The entropy density $s:=s(u,\vr)$ (or the specific entropy $s_m=s/\vr$) is related to 
$u$ and $\vr$ through the first law of thermodynamics in the form:
\be\label{eq:Gibbs}
T \rmd s= \rmd u- \mu  \rmd  \vr, 
\ee
 with the \emph{chemical potential} $\mu:=\mu(u,\vr)$.  The entropy $s$ is a concave function of $(u,\vr),$  
 as a consequence of extensivity of the thermodynamic limit \cite{martin1979statistical,ruelle1999statistical}
 or macroscopically as an expression of thermodynamic stability \cite{callen1985thermodynamics,evans2015entropy}. 
 The {\it fundamental equation} $s:=s(u,\vr)$ completely determines the thermodynamics of any system, 
 yielding by equilibrium thermodynamic relations all other functions, including temperature $T(u,\vr),$  
 chemical potential $\mu(u,\vr),$ pressure $p(u,\vr),$ etc. These functions satisfy the thermodynamic \emph{Gibbs relation}:
 \be\label{eq:homogenousGibbs}
T s = u+p-\mu\vr, 
\ee
by an application of the Euler theorem on homogeneous functions \cite{callen1985thermodynamics,evans2015entropy}.
\begin{remark} \label{EOSremark}
For concreteness, we mention here a couple of examples of thermodynamic fundamental equations of some standard 
fluids. First, an \emph{ideal gas} has 
\begin{eqnarray} \label{idealGasEOS}
s(u,\vr) =  \alpha k_B \vr\left[\log\left(\frac{u}{\vr^{1+1/\alpha}}\right)+s_0\right] 
\end{eqnarray}
for Boltzmann's constant $k_B$ and parameter $\alpha=f/2>0,$ related to the number of mechanical degrees of freedom
$f$ of individual gas molecules. For a simple monatomic gas in $d$ space dimensions, $f=d.$ The constant $s_0$ is 
determined from microscopic statistical mechanics. This simple model with an appropriate choice of $\alpha$ describes 
the thermodynamics of most gaseous systems at low density.

Another standard example is the \emph{van der Waals fluid} with entropy: 
\begin{eqnarray} \label{vanderWaalsEOS}
s(u,\vr) =  {\rm conc.\ env.}\left\{\alpha k_B \vr \left[\log\left((1/\vr-b)^{1/\alpha} (u/\vr+a\vr)\right)+s_0\right]\right\},
\end{eqnarray}
Here the notation ``conc.\ env.'' denotes the upper concave envelope of the function inside the curly brackets, which is smooth but not 
a globally concave function of $(u,\vr).$  The van der Waals model incorporates some density corrections through the new 
terms involving constants $a,b>0,$ but reduces to the ideal gas law in the low-density limit $\rho\to 0.$ This is the simplest 
example of a fluid model exhibiting a gas-liquid phase transition for low energies and high densities, at the points 
in the $(u,\vr)$-plane of non-smoothness of the concave envelope in (\ref{vanderWaalsEOS}).

For these models, see  \cite{callen1985thermodynamics,evans2015entropy}. Needless to say,
our results apply not just to these specific examples but very widely, because the relations
(\ref{eq:Gibbs}) and (\ref{eq:homogenousGibbs}) are general results of equilibrium thermodynamics 
and statistical mechanics \cite{martin1979statistical,ruelle1999statistical}.
\end{remark}
From the compressible Navier-Stokes system (\ref{eq:rho})--(\ref{eq:TE}) and the thermodynamic relation 
(\ref{eq:Gibbs}) follows the balance equation for the entropy density:
\be\label{eq:s}
\partial_t s+ \nabla_x \cdot \left(s\bu + \frac{\bq}{T} \right)  =   \frac{Q }{T}  + \Sigma_\kappa.
\ee
The entropy production rate $\Sigma := {Q }/{T}  + \Sigma_\kappa$ involves a viscous heating contribution with $Q$ again given by (\ref{eq:Q}), and a term due to thermal conduction:
\be\label{eq:Lamb}
\Sigma_\kappa := -\frac{\bq\cdot \nabla_xT}{T^2}= \kappa \frac{|\nabla_x T|^2}{T^2}.
\ee
In accord with second law of thermodynamics, entropy is globally increased since:
\be\label{dissterms}
\Sigma := \frac{Q}{T} +\Sigma_\kappa = 2\frac{\eta}{T}|\bS|^2 +\frac{\zeta}{T} |\Theta|^2 +  \kappa \frac{|\nabla_x T|^2}{T^2}\geq 0.
\ee
For these standard results see \cite{LandauLifshitz87,deGrootMazur84}. 

Smooth solutions of the compressible Euler system satisfy the same balance equations as (\ref{eq:KE}), (\ref{eq:IE}), 
and (\ref{eq:s}), but with $\zeta,\eta,\kappa\equiv 0$ so all of the non-ideal terms vanish, i.e.  $\bT,\bq=0$ and $Q,\Sigma\equiv 0$. This 
need not be true, of course, for weak solutions. 
An important class of weak solutions that we consider are those arising from limits of solutions $\vr^{\varepsilon},u^{\varepsilon},\bu^{\varepsilon}$
of the Navier-Stokes system with transport coefficients scaled as $\eta^\ve=\ve\eta,$  $\zeta^\ve=\varepsilon\zeta,$ 
$\kappa^\ve=\varepsilon\kappa,$ for $\varepsilon \rightarrow 0.$ Essentially, $1/\varepsilon$ represents the Reynolds 
and P\'eclet numbers of the fluid. To avoid issues involving boundary conditions, we consider only flows on space
domains $\Omega$ either $d$-dimensional Euclidean space $\Omega=\mathbb{R}^d$ or the $d$-torus 
$\Omega=\mathbb{T}^d$. We shall often use the notation $\Gamma =  \Omega\times (0,T)$ for the 
space-time domain, $T<\infty$ or $T=\infty.$
  
 We then make the following specific assumptions: 

\begin{ass}\label{assum1}
 Given $\varepsilon>0$, we assume that there exists a unique smooth solution $u^\varepsilon,\vr^{\varepsilon},\bu^\varepsilon$ of the compressible 
 Navier-Stokes system (\ref{eq:rho})--(\ref{eq:TE}) on $\Omega \times (0,T)$ for a given equation of state.  In fact, most of our analysis 
will apply to suitable weak Navier-Stokes solutions. We assume $u^\varepsilon,\vr^\varepsilon,\bu^\varepsilon\in L^\infty(\Omega \times (0,T))$ 
uniformly bounded for $\ve<\ve_0$ and that for some $1\leq p<\infty$
strong limits exist 
\be
u^{\varepsilon}\to u, \ \  \vr^{\varepsilon}\to \vr,\ \  \bu^{\varepsilon}\to \bu \ \ {\rm in} \  \ L_{loc}^p(\Omega \times (0,T)).
\label{ass-str-conv} 
\ee
Here $L_{loc}^p(\Gamma)$, as usual (see e.g. \cite{gilbarg2015elliptic,evans2015measure})
, denotes the linear space of measurable functions which are locally $p$-integrable:
\be
L_{loc}^p(\Gamma) = \{ f:\Gamma\to \mathbb{R} \ {\rm meas.} \ | \ f\in L^p(O),\  \forall\ {\rm open}\  O\subset\subset \Gamma\}
\ee
 where $A\subset\subset B$ denotes that the closure $\bar{A}$ is compact and $\bar{A}\subset B$. Strong convergence 
$f_n\to f$ in $L_{loc}^p(\Gamma)$ is the requirement that for any open $O\subset\subset \Gamma$ the restrictions 
converge $f_n\big|_O\to f\big|_O$ strong in $L^p(O).$  With this topology,  $L_{loc}^p(\Gamma)$ 
is a complete metrizable space for all $p\geq 1$.  Whenever $\bar{\Gamma}$ is itself compact 
(e.g. $\bar{\Gamma} = \mathbb{T}^d\times [0,T]$ with $T<\infty$), $L_{loc}^p(\Gamma)= L^p(\Gamma)$.  
We remark also that, trivially,  $L^\infty(\Gamma)\subset L_{loc}^p(\Gamma)$ for all $p\geq 1$. 
Thus the convergence in (\ref{ass-str-conv}) implies convergence pointwise almost everywhere for a subsequence $\ve_k\rightarrow 0$ and 
$u,\vr,\bu\in L^\infty(\Omega \times (0,T)).$ 
The mode of convergence (\ref{ass-str-conv}) permits limiting fields with jump discontinuities. We also assume $\vr^\ve\geq \vr_0$ 
for some $\vr_0>0$ and $\varepsilon<\varepsilon_0, $ so that the fluid nowhere approaches a vacuum state with zero density. 
\end{ass}

\begin{ass}\label{smoothAss}
We assume that the solutions involve 
thermodynamic states $(u,\vr)$ strictly away from  phase transitions, so that all thermodynamic functions 
$h=p,$ $T,$ $\mu,$ $s$, $\eta,$ $\zeta,$ $\kappa,$ etc. are smooth in $u,$ $\vr$. The set of states 
attained by any solution is the \emph{essential range} over space-time, ${\mathcal R}={\rm ess.ran}(u,\vr)$ and 
${\mathcal R}^\ve={\rm ess.ran}(u^\ve,\vr^\ve)$
for $\ve>0,$ which are compact sets in $\mathbb{R}^2$ \cite{rudin1987real}. 
The uniform boundedness in $L^\infty(\Omega \times (0,T))$ 
of $u^\ve,$ $\vr^\ve$ for $\ve<\ve_0$ implies that there exists a compact set $K\subset \mathbb{R}^2$ such that the 
closed convex hull 
\be {\rm conv}[{\mathcal R}^\ve \cup {\mathcal R}]\subseteq K, \quad \forall \ve<\ve_0. \ee
We then assume for $h$ that there is an open set $U\subset \mathbb{R}^2,$ with $K\subset U$ and $h\in C^M(U)$
with smoothness exponent $M\geq 2.$
\end{ass}

\begin{ass}\label{assum1b}  
Assume that the dissipation terms defined in equations (\ref{eq:Q}) and (\ref{dissterms}) 
converge as $\varepsilon\rightarrow 0$ in the sense of distributions:
$$
 Q_\eta^{\varepsilon}:= 2\eta^\ve |\bS^\ve|^2, \quad Q_\zeta^{\varepsilon}:= \zeta^\ve (\Theta^\ve)^2, \quad Q^\ve:=Q^\ve_\eta+ Q^\ve_\zeta\ \Dto\  Q,
$$
and 
$$
 \Sigma_\eta^{\varepsilon}:= \frac{Q_\eta^{\varepsilon}}{T^\ve}, \quad 
 \Sigma_\zeta^{\varepsilon}:= \frac{Q_\zeta^{\varepsilon}}{T^\ve}, \quad 
 \Sigma_\kappa^\varepsilon := \kappa^\ve\left| \frac{\nabla_x T^\varepsilon}{T^\varepsilon}\right|^2, \quad 
\Sigma^\ve:= \Sigma_\eta^\ve+\Sigma_\zeta^\ve+\Sigma_\kappa^\ve \ \Dto \ \Sigma.
 $$
The limit distributions are obviously non-negative, and thus Radon measures. 
 \end{ass}

\begin{remark} \label{rem:NSshockSol}
The set of compressible Navier-Stokes solutions on Euclidean space $\mathbb{R}^d$ 
satisfying these three assumptions is non-empty and includes, 
in particular, shock solutions. See examples in \cite{johnson2014closed} and \cite{eyinkdrivascompr}.  
Numerical simulations of compressible turbulence with the system (\ref{eq:rho})--(\ref{eq:TE}) 
on the torus $\mathbb{T}^d$ show that small-scale shocks (or ``shocklets'')
naturally develop. There is also some evidence, however, that at sufficiently high Mach numbers the limiting mass density 
$\vr$ as $\varepsilon\rightarrow 0$ may exist only as a measure and not as a bounded function \cite{kim2005density}. 
There is thus empirical motivation to weaken Assumption \ref{assum1} in future work. 
\end{remark}

We now state our main theorems. First, we establish the balance equations of energy and entropy for general bounded weak Euler solutions :

\begin{theorem}\label{Onsager1E}
\noindent  Let $u,\vr,\bu\in L^\infty(\Omega \times (0,T))$ be any weak solution of the compressible Euler system (\ref{E-eq:rho})--(\ref{E-eq:TE}) satisfying $\vr\geq\vr_0>0$ and Assumption \ref{smoothAss}.  Let $Q_\ell^{\rm flux}$ be the ``energy flux'' defined by 
(\ref{eq:Quell}) below and $\Sigma_\ell^{\rm inert *}$ the ``inertial entropy production'' defined by (\ref{eq:SigInert}). Assuming that 
the distributional limit of $Q^{\rm flux}_\ell$ exists, 
\be\label{QcascadeDefects}
 Q_{\rm flux}= \Dlim_{\ell\to 0} Q_\ell^{\rm flux}
 \ee
then local energy and entropy balance equations hold in the sense of distributions on $\Omega \times (0,T)$:
 \begin{eqnarray}
 \label{eq:KE*Euler}
\partial_t \left(\frac{1}{2}\vr |\bu|^2\right) + \nabla_x \cdot \left(\left(p+\frac{1}{2}\vr |\bu|^2\right) \bu \right)   &=& p \circ\Theta-  Q_{\rm flux},\\  
\label{eq:IE*Euler}
\partial_t  u + \nabla_x \cdot \left( u\bu\right)   &=&  Q_{\rm flux}- p \circ \Theta,\\
 \label{eq:s*Euler}
\partial_t s+ \nabla_x \cdot \left(s\bu \right)  &=&   \Sigma_{\rm inert}.
\end{eqnarray}
where  $\Sigma_{\rm inert}$ and $p \circ\Theta$ necessarily exist and are defined by the distributional limits
\be\label{cascadeDefects}
\Sigma_{\rm inert}=\Dlim_{\ell\to 0} \Sigma_\ell^{\rm inert *}, \quad \quad  p\circ\Theta = \Dlim_{\ell\rightarrow 0} (p*\cG_\ell)(\Theta*\cG_\ell), \ee
with $\cG_\ell,$ $\ell>0$ a space-time mollifying sequence. 
 \end{theorem}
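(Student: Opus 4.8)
The plan is to run the Duchon--Robert / Constantin--E--Titi coarse-graining argument, adapted to the thermodynamic structure of the compressible system; no Besov regularity enters, only the $L^\infty$ bounds, the lower bound $\vr\geq\vr_0$, smoothness of the thermodynamic functions on the compact range of $(u,\vr)$ (Assumption~\ref{smoothAss}), and the assumed convergence (\ref{QcascadeDefects}). First I would mollify the weak balances (\ref{E-eq:rho})--(\ref{E-eq:TE}) against $\cG_\ell$: since convolution commutes with $\partial_t,\nabla_x$ and is continuous on $\mathcal{D}'$, the filtered mass, momentum and total-energy balances hold as \emph{pointwise} identities between smooth functions on the interior of $\Gamma$. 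Writing $\ol{f}_\ell:=f*\cG_\ell$ and introducing the Favre-filtered velocity $\tilde\bu_\ell:=\ol{(\vr\bu)}_\ell/\ol\vr_\ell$ (legitimate since $\ol\vr_\ell\geq\vr_0>0$), I would dot the filtered momentum balance with $\tilde\bu_\ell$ and combine with the filtered mass balance to get an exact balance for a coarse-grained kinetic-energy density, e.g. $\frac12\ol\vr_\ell|\tilde\bu_\ell|^2$. The nonlinear mismatches created by filtering the advective and pressure terms are a Favre subscale stress contracted with $\nabla_x\tilde\bu_\ell$ together with a baropycnal term comparing $\ol p_\ell\diver\tilde\bu_\ell$ with $\ol p_\ell\diver\ol\bu_\ell$; these are precisely what is collected into $Q_\ell^{\rm flux}$ by (\ref{eq:Quell}), leaving the clean pressure-work term $\ol p_\ell\diver\ol\bu_\ell=(p*\cG_\ell)(\Theta*\cG_\ell)$. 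Subtracting this kinetic-energy identity from the filtered total-energy balance gives the coarse-grained internal-energy identity, whose flux collapses to $\ol{(u\bu)}_\ell$ up to an $L^\infty$-bounded subscale piece vanishing in $L^1_{loc}$.

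Next I would produce the coarse-grained entropy identity by applying the thermodynamic chain rule to the coarse-grained internal-energy and mass identities, exactly as the smooth balance (\ref{eq:s}) is obtained from (\ref{eq:rho}) and (\ref{eq:IE}), using $\partial s/\partial u=1/T$, $\partial s/\partial\vr=-\mu/T$ and the Gibbs relation (\ref{eq:homogenousGibbs}) --- all licit by the $C^M$ smoothness of $s$, $T$, $\mu$. This yields an exact identity $\partial_t\sigma_\ell+\nabla_x\cdot\bF_\ell=\Sigma_\ell^{\rm inert *}$ with $\sigma_\ell$ a coarse-grained entropy density ($\sigma_\ell\to s$ in $L^1_{loc}$), $\bF_\ell$ an entropy flux ($\bF_\ell\to s\bu$ in $L^1_{loc}$), and $\Sigma_\ell^{\rm inert *}$ the explicit assembly (\ref{eq:SigInert}) of $Q_\ell^{\rm flux}$ weighted by the resolved temperature, the pressure-dilatation and baropycnal commutators, and the commutator of filtering with the nonlinear map $(u,\vr)\mapsto s(u,\vr)$ (whose leading contribution carries a sign by concavity of $s$).

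Finally I would let $\ell\to0$. Standard mollification estimates give $\ol{f}_\ell\to f$ in $L^q_{loc}(\Gamma)$ for every $q<\infty$ with $\|\ol f_\ell\|_{L^\infty}\le\|f\|_{L^\infty}$; since $\vr\geq\vr_0$ and $p=p(u,\vr)$, $s=s(u,\vr)$, $T=T(u,\vr)$ are bounded measurable (with $T$ bounded below) by Assumption~\ref{smoothAss}, every coarse-grained density and flux above --- a finite product of such $L^\infty$-bounded, $L^q_{loc}$-convergent factors with quotients controlled by $\vr_0$ --- converges in $L^1_{loc}(\Gamma)$ to the corresponding true density or flux, hence its $\partial_t$ and $\nabla_x$ converge in $\mathcal{D}'(\Gamma)$. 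Thus the left-hand sides and the explicit fluxes of all three coarse-grained identities converge to the left-hand sides of (\ref{eq:KE*Euler})--(\ref{eq:s*Euler}). In the kinetic-energy identity $Q_\ell^{\rm flux}$ converges by hypothesis (\ref{QcascadeDefects}), so $(p*\cG_\ell)(\Theta*\cG_\ell)$ converges as the difference of two convergent distributions; its limit is by definition $p\circ\Theta$, which gives (\ref{eq:KE*Euler}) and, by subtraction, (\ref{eq:IE*Euler}). The same convergences then force $\Sigma_\ell^{\rm inert *}$ to converge; calling the limit $\Sigma_{\rm inert}$ yields (\ref{eq:s*Euler}) and (\ref{cascadeDefects}).

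The hard part will be the entropy bookkeeping of the second step: organizing the filtering commutators so that $\Sigma_\ell^{\rm inert *}$, as defined in (\ref{eq:SigInert}), is simultaneously a \emph{genuine distribution} for each fixed $\ell$ and \emph{convergent from the assumed data alone}. Carried out naively, the thermodynamic chain rule produces terms of the form $({\rm subscale\ flux})\cdot\nabla_x\big(1/T(\ol u_\ell,\ol\vr_\ell)\big)$ and $Q_\ell^{\rm flux}/T(\ol u_\ell,\ol\vr_\ell)$, neither of which is controlled just because the subscale flux is $L^\infty$-bounded and $Q_\ell^{\rm flux}$ converges only in $\mathcal{D}'$ --- the resolved temperature roughens like $1/\ell$. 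The fix must exploit that the pressure-dilatation renormalized product $p\circ\Theta$, which also enters $\Sigma_\ell^{\rm inert *}$ through the thermodynamics, is pinned down precisely by (\ref{QcascadeDefects}); this is exactly why the theorem conditions its conclusion on the existence of that one limit. Everything else --- the kinetic- and internal-energy identities and the elementary convergence of products of $L^\infty$-bounded, $L^q_{loc}$-convergent sequences --- is routine, and in the barotropic case already appears in \cite{feireisl2016regularity}.
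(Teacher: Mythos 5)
Your first three paragraphs are essentially the paper's own proof: mollify the weak balances, derive the coarse-grained kinetic-energy identity (\ref{eq:CGKE}) with $\bT,\bq=0$, obtain internal energy by subtraction from the filtered total-energy balance, build a coarse-grained entropy identity via the thermodynamic chain rule, and pass to the limit using only the qualitative facts that cumulants of $L^\infty$ fields and commutators $\Delta_\ell h$ vanish in $L^p_{loc}$ (Propositions \ref{bndTau} and \ref{hessThermo}, estimates (\ref{density-est}) and (\ref{density-est-2})). Your closing mechanism stated there --- the strong $L^p_{loc}$ convergence of the densities and fluxes forces the distributional convergence of whatever sits on the right-hand side, with the hypothesis (\ref{QcascadeDefects}) used once, in the kinetic-energy identity, to split the convergent combination $\ol{p}_\ell\ol{\Theta}_\ell - Q_\ell^{\rm flux}$ into $p\circ\Theta$ and $Q_{\rm flux}$ --- is exactly how the paper concludes.

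The last paragraph, however, contains a genuine misconception about where the difficulty lies and how it is resolved. First, the quantity $p\circ\Theta$ does \emph{not} enter $\Sigma_\ell^{\rm inert *}$ at fixed $\ell$: the whole point of working with the \emph{intrinsic} resolved entropy $\ul{s}_\ell^*=\ul{\beta}_\ell(\ol{u}_\ell^*+\ul{p}_\ell)-\ul{\lambda}_\ell\ol{\vr}_\ell$, built on $\ol{u}_\ell^*=\ol{E}_\ell-\frac{1}{2}\ol{\vr}_\ell|\tilde{\bu}_\ell|^2$ rather than on $\ol{u}_\ell$, is that the a priori undefined quantities $\ol{Q}_\ell$ and $\ol{\tau}_\ell(p,\Theta)$ cancel identically, so that (\ref{eq:LSs*}) is an exact pointwise identity between quantities that are well defined for a general bounded weak Euler solution (see Remark \ref{RemarkIntrinsic}); if you run the chain rule on the ordinary resolved entropy $s(\ol{u}_\ell,\ol{\vr}_\ell)$ you get (\ref{eq:LSs}), whose right-hand side contains $\ol{Q}_\ell-\ol{\tau}_\ell(p,\Theta)$ and is meaningless here. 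Second, your worry that terms like $\ul{\beta}_\ell Q_\ell^{\rm flux}$ or $\nabla_x\ul{\beta}_\ell\cdot\bJ_\ell^k$ are ``not controlled from the assumed data alone'' is a red herring for this theorem: no term-by-term control of $\Sigma_\ell^{\rm inert *}$ is needed or attempted (that is the business of Theorem \ref{Onsager2}, under Besov regularity). Since $\Sigma_\ell^{\rm inert *}=\partial_t\ul{s}_\ell^*+\nabla_x\cdot\bJ_\ell^{s*}$ exactly, and $\ul{s}_\ell^*\to s$, $\bJ_\ell^{s*}\to s\bu$ strongly in $L^p_{loc}$ from the $L^\infty$ bounds, $\vr\geq\vr_0$ and Assumption \ref{smoothAss} alone, the distributional convergence of $\Sigma_\ell^{\rm inert *}$ is automatic, and the hypothesis (\ref{QcascadeDefects}) plays no role whatsoever in the entropy balance (\ref{eq:s*Euler}); it is needed only for (\ref{eq:KE*Euler})--(\ref{eq:IE*Euler}). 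So the ``fix'' you propose is both unnecessary and, as stated, incorrect; what your proposal is actually missing is the identification of the intrinsic entropy (equivalently, the cancellation of $\ol{Q}_\ell-\ol{\tau}_\ell(p,\Theta)$ against the unresolved kinetic-energy budget) as the device that makes the finite-$\ell$ entropy identity legitimate in the first place.
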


\begin{remark} \label{DuchonRobertRem1}
This result is analogous to Proposition 2 of \cite{DuchonRobert2000} for weak solutions of incompressible Euler
with $\bu\in L^3(\mathbb{T}^d \times (0,T))$. In their theorem, the assumption on the existence of $Q_{\rm flux}$ 
was unnecessary. We need to add this as an additional hypothesis,
because of the new term $p\circ\Theta$ that appears in the energy balance equations. Of course, $p\circ\Theta=0$ 
assuming incompressibility.
 \end{remark} 

\begin{remark}
Note that the second equation in (\ref{cascadeDefects}) for $p\circ\Theta$ is a standard definition of a generalized distributional 
product of $p$ and $\Theta$ \cite{Oberguggenberger92}. This standard definition requires that the limit be independent of the
chosen mollifier $\cG$.  We note that for the purposes of Theorem \ref{Onsager1E}, one could alternatively assume existence 
of $p \circ\Theta$ and then deduce it for $Q_{\rm flux}$. The combination $p\circ \Theta-Q_{\rm flux}$ always exists. 
 \end{remark} 

Our next results concern the strong limits of Navier-Stokes solutions satisfying Assumptions \ref{assum1} -- \ref{assum1b}.  
 First, we prove that these limits are necessarily weak solutions of the Euler equations, even if the limit dissipation measures 
 in Assumption \ref{assum1b} remain positive: $Q>0$ and $\Sigma>0.$  Moreover, we show that such solutions satisfy weak energy and entropy balance laws which include possible anomalies: 
 
\begin{theorem}\label{Onsager1NS}
\noindent  The strong limits $u,\vr,\bu$ of compressible Navier-Stokes solutions under Assumptions \ref{assum1} -- \ref{assum1b} are weak solutions of the compressible Euler system (\ref{E-eq:rho})--(\ref{E-eq:TE}) on $\Omega \times (0,T)$. 
Furthermore, the following local energy and entropy equations hold in the sense of distributions on $\Omega \times (0,T)$:
 \begin{eqnarray}
 \label{eq:KE*}
\partial_t \left(\frac{1}{2}\vr |\bu|^2\right) + \nabla_x \cdot \left(\left(p+\frac{1}{2}\vr |\bu|^2\right) \bu \right)   &=& p *\Theta-  Q,\\  
\label{eq:IE*}
\partial_t  u + \nabla_x \cdot \left( u\bu\right)   &=&  Q- p *\Theta,\\
 \label{eq:s*}
\partial_t s+ \nabla_x \cdot \left(s\bu \right)  &=&   \Sigma,
\end{eqnarray}
with $Q\geq 0$ and $\Sigma\geq 0$ given by Assumption \ref{assum1b} and with 
\be p*\Theta := \Dlim_{\ve\to 0}\  p^\ve \Theta^\ve, \label{pDil} \ee
where this distributional limit necessarily exists. 
\end{theorem}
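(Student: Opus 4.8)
The plan is to obtain both assertions by passing to the limit $\varepsilon\to 0$ in the balance laws that the smooth Navier--Stokes solutions of Assumption \ref{assum1} satisfy exactly: the system (\ref{eq:rho})--(\ref{eq:TE}) itself will give the weak Euler solution, while the classical local balances (\ref{eq:KE}), (\ref{eq:IE}) and (\ref{eq:s}) will give (\ref{eq:KE*})--(\ref{eq:s*}). I expect the only genuinely substantive point to be that the dissipative fluxes disappear in the limit; the remainder should be the stability of strong $L^q_{loc}$-convergence under the nonlinearities, together with a soft argument that reads the existence of $p*\Theta$ off of the energy balance itself.

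First I would show that the dissipative fluxes vanish in $L^1_{loc}(\Gamma)$. By Assumption \ref{assum1b}, for every non-negative $\varphi\in C_c^\infty(\Gamma)$ the pairings $\langle Q^\varepsilon,\varphi\rangle$ and $\langle\Sigma^\varepsilon,\varphi\rangle$ converge, hence are bounded; taking $\varphi\geq 1$ on a fixed $O\subset\subset\Gamma$ shows $Q^\varepsilon,\Sigma^\varepsilon$ are bounded in $L^1_{loc}(\Gamma)$, and therefore so are the dominated quantities $Q^\varepsilon_\eta=2\eta^\varepsilon|\bS^\varepsilon|^2$, $Q^\varepsilon_\zeta=\zeta^\varepsilon(\Theta^\varepsilon)^2$ and $\Sigma^\varepsilon_\kappa=\kappa^\varepsilon|\nabla_x T^\varepsilon/T^\varepsilon|^2$ (here $T^\varepsilon=T(u^\varepsilon,\vr^\varepsilon)$ is bounded below by a positive constant, since $T$ is continuous and positive on the compact $K$ of Assumption \ref{smoothAss}). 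Since $\eta^\varepsilon=\varepsilon\,\eta(u^\varepsilon,\vr^\varepsilon)$ with $\eta$ bounded on $K$, the splitting $\eta^\varepsilon\bS^\varepsilon=\sqrt{\eta^\varepsilon}\,(\sqrt{\eta^\varepsilon}\,\bS^\varepsilon)$ and Cauchy--Schwarz give, on each $O\subset\subset\Gamma$,
\be
\|\eta^\varepsilon\bS^\varepsilon\|_{L^1(O)}\ \leq\ \left(\varepsilon\,\|\eta\|_{L^\infty(K)}\,|O|\right)^{1/2}\left(\frac{1}{2}\,\|Q^\varepsilon_\eta\|_{L^1(O)}\right)^{1/2}\ \longrightarrow\ 0 .
\ee
The same estimate with $\sqrt{\zeta^\varepsilon}$ and with $\sqrt{\kappa^\varepsilon}$ (all of size $O(\sqrt\varepsilon)$ uniformly on $K$, and with $T^\varepsilon$ also bounded above on $K$) gives $\zeta^\varepsilon\Theta^\varepsilon\to 0$, $\kappa^\varepsilon\nabla_x T^\varepsilon\to 0$ and $\kappa^\varepsilon\nabla_x T^\varepsilon/T^\varepsilon\to 0$ in $L^1_{loc}$; hence $\bT^\varepsilon\to 0$, $\bq^\varepsilon\to 0$ and $\bq^\varepsilon/T^\varepsilon\to 0$, and, multiplying by the uniformly bounded $\bu^\varepsilon$, also $\bT^\varepsilon\cdot\bu^\varepsilon\to 0$, all strongly in $L^1_{loc}(\Gamma)$ and a fortiori in $\mathcal{D}'(\Gamma)$.

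Next I would record the convergence of the nonlinearities and pass to the limit. From Assumption \ref{assum1}, $\vr^\varepsilon,u^\varepsilon,\bu^\varepsilon$ are uniformly bounded in $L^\infty$ and converge strongly in $L^p_{loc}$, hence (interpolating with the $L^\infty$ bound) strongly in every $L^q_{loc}$, $q<\infty$; consequently $\vr^\varepsilon\bu^\varepsilon$, $\vr^\varepsilon\bu^\varepsilon\bu^\varepsilon$, $\frac{1}{2}\vr^\varepsilon|\bu^\varepsilon|^2$ and $E^\varepsilon=\frac{1}{2}\vr^\varepsilon|\bu^\varepsilon|^2+u^\varepsilon$ converge strongly to the corresponding products of the limits. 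Every thermodynamic function $h$ among $p,T,s,\mu,\dots$ is $C^M$ with $M\geq 2$ on the open set $U\supseteq K$ (Assumption \ref{smoothAss}), hence Lipschitz on $K$; since $(u^\varepsilon,\vr^\varepsilon)\in{\mathcal R}^\varepsilon\subseteq K$ and $(u,\vr)\in{\mathcal R}\subseteq K$ a.e., it follows that $\|h(u^\varepsilon,\vr^\varepsilon)-h(u,\vr)\|_{L^q(O)}\to 0$ with $h(u^\varepsilon,\vr^\varepsilon)$ uniformly bounded, so $p^\varepsilon\to p:=p(u,\vr)$, $s^\varepsilon\to s:=s(u,\vr)$ and $T^\varepsilon\to T(u,\vr)$ strongly in $L^q_{loc}$, and all products of these with $\bu^\varepsilon$ that appear in the system and its balance laws converge strongly. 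Passing to the limit in (\ref{eq:rho})--(\ref{eq:TE}), read in $\mathcal{D}'(\Gamma)$, every term converges and the $\bT^\varepsilon$-, $\bq^\varepsilon$- and $\bT^\varepsilon\cdot\bu^\varepsilon$-terms drop out, so $(u,\vr,\bu)$ is a bounded weak solution of (\ref{E-eq:rho})--(\ref{E-eq:TE}) with pressure $p(u,\vr)$ and $u=E-\frac{1}{2}\vr|\bu|^2$. For the local balances, the smooth solutions satisfy (\ref{eq:KE}), (\ref{eq:IE}) and (\ref{eq:s}) classically; rewriting the kinetic-energy balance as
\be
p^\varepsilon\Theta^\varepsilon\ =\ \partial_t\left(\frac{1}{2}\vr^\varepsilon|\bu^\varepsilon|^2\right)+\nabla_x\cdot\left(\left(p^\varepsilon+\frac{1}{2}\vr^\varepsilon|\bu^\varepsilon|^2\right)\bu^\varepsilon+\bT^\varepsilon\cdot\bu^\varepsilon\right)+Q^\varepsilon ,
\ee
every term on the right has a $\mathcal{D}'(\Gamma)$-limit (by the above and Assumption \ref{assum1b}, $Q^\varepsilon\Dto Q$), so $p^\varepsilon\Theta^\varepsilon$ has one too; this is $p*\Theta$ of (\ref{pDil}), and the limit of the displayed identity is (\ref{eq:KE*}). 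The identical passage in (\ref{eq:IE}) gives (\ref{eq:IE*}) with the same $p*\Theta$ --- consistency with (\ref{eq:KE*}) is automatic, since adding the two reproduces the already-established total-energy balance --- and the identical passage in (\ref{eq:s}), using $\bq^\varepsilon/T^\varepsilon\to 0$ and $\Sigma^\varepsilon\Dto\Sigma$, gives (\ref{eq:s*}); finally $Q,\Sigma\geq 0$ as distributional limits of the non-negative functions $Q^\varepsilon,\Sigma^\varepsilon$.

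The hard part will be the first step. Nothing in Assumptions \ref{assum1}--\ref{smoothAss} bounds $\nabla_x\bu^\varepsilon$ or $\nabla_x T^\varepsilon$ on their own, so the vanishing of the work term $\bT^\varepsilon\cdot\bu^\varepsilon$ and of the heat fluxes is not automatic: an inviscid limit could a priori develop arbitrarily rough gradients. It is precisely Assumption \ref{assum1b} that rescues the argument --- it forces $\sqrt{\eta^\varepsilon}\,\bS^\varepsilon$ and $\sqrt{\kappa^\varepsilon}\,\nabla_x T^\varepsilon/T^\varepsilon$ to stay bounded in $L^2_{loc}$, after which the explicit $O(\sqrt\varepsilon)$ size of $\sqrt{\eta^\varepsilon}$, $\sqrt{\zeta^\varepsilon}$, $\sqrt{\kappa^\varepsilon}$ kills the fluxes through one Cauchy--Schwarz. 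Everything else is routine: strong $L^q_{loc}$-convergence is stable under products and under composition with the locally Lipschitz thermodynamic functions, and $p*\Theta$ is never constructed directly --- its existence is forced by (\ref{eq:KE*}) as soon as all the other terms are known to converge.
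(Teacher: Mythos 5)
Your proposal is correct: the uniform local $L^1$ bound you extract for $Q^\ve,\Sigma^\ve$ from non-negativity plus $\mathcal{D}'$-convergence is legitimate, the Cauchy--Schwarz splitting $\eta^\ve\bS^\ve=\sqrt{\eta^\ve}\,(\sqrt{\eta^\ve}\bS^\ve)$ with the $O(\sqrt\ve)$ size of the transport coefficients does kill $\bT^\ve$, $\bT^\ve\cdot\bu^\ve$, $\bq^\ve$, $\bq^\ve/T^\ve$ in $L^1_{loc}$, and the second half (strong convergence of the ideal nonlinearities and of the thermodynamic compositions via the mean-value/Lipschitz bound on $K$, then reading the existence of $p*\Theta$ off the kinetic-energy balance) coincides with the paper's argument. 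The implementation of the first half differs from the paper's: there, the vanishing of the dissipative terms is proved \emph{pointwise after space-time mollification}, by pairing the dissipation measures against the fixed test functions $|(\partial\cG)_\ell(\cdot-X)|^2$ (so the full convergences $Q^\ve\Dto Q$, $\Sigma^\ve\Dto\Sigma$ of Assumption \ref{assum1b} are invoked directly), and the conclusion that $(u,\vr,\bu)$ is a weak Euler solution is reached through the equivalence of coarse-grained and distributional solutions (Proposition \ref{CGequiv}); you instead bypass the coarse-graining formalism, use only the local boundedness of the dissipation (which non-negativity makes available from mere $\mathcal{D}'$-convergence), and pass to the limit in the standard weak formulation. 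Your route is slightly more elementary and yields the somewhat stronger intermediate statement that the dissipative fluxes themselves vanish strongly in $L^1_{loc}$ rather than only their coarse-grained divergences pointwise; the paper's route is chosen to exercise the coarse-grained-solution machinery that is reused in the proofs of Theorems \ref{Onsager1E}, \ref{Onsager2} and \ref{timeReg}. Two cosmetic points: the lower bound on $T^\ve$ you mention is not actually needed where you invoke it (only the upper bound on $T^\ve$ enters, when converting $\kappa^\ve\nabla_xT^\ve/T^\ve$ into $\bq^\ve$), and the Lipschitz bound for $h(u^\ve,\vr^\ve)-h(u,\vr)$ should be justified by noting that the segment joining $(u^\ve,\vr^\ve)$ and $(u,\vr)$ lies in the convex compact $K$ of Assumption \ref{smoothAss}, exactly as the paper does.
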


\begin{remark}
Theorem \ref{Onsager1NS} is analogous to Proposition 4 of \cite{DuchonRobert2000} for the strong limits of solutions
of the incompressible Navier-Stokes equation with viscosity tending to zero. Again, in their theorem, the analogue 
of our Assumption \ref{assum1b} was unnecessary, whereas we needed to add this as an additional hypothesis because of the new term $p*\Theta$ defined by (\ref{pDil}) that appears in the energy balance equations. 
\end{remark}


\begin{remark}
Euler solutions obtained from Theorem \ref{Onsager1NS} for vanishing viscosity necessarily satisfy Theorem \ref{Onsager1E} 
for general weak Euler solutions.  It follows that:
\be\label{compressible45ths}
 \Sigma_{\rm inert} = \Sigma \geq 0  \quad\quad {\rm and } \quad\quad   Q_{inert}:=Q_{\rm flux} + \tau(p,\Theta)=Q\geq 0,
\ee
where $\tau(p,\Theta)$ is the ``pressure-dilatation defect'' defined by
 \be \tau(p,\Theta)= p*\Theta-p\circ \Theta \label{pressDildef} .\ee
The lefthand sides in (\ref{compressible45ths}) are ``inertial-range'' expressions for $Q$ and $\Sigma$, 
analogous to those established in Proposition 1 and Section 5 of \cite{DuchonRobert2000} for incompressible fluids.   
In particular, $\Sigma_{\rm inert}$ and $Q_{\rm flux}$ describe ``cascade''  and can be expressed in terms of increments 
of the variables $u,$ $\vr,$ $\bu$ by analogues of the Kolmogorov ``4/5th-law'' for compressible turbulence.  
Whereas $\Sigma_{\rm inert},$ $Q_{\rm flux}$ can have any signs for general weak Euler solutions, they are 
constrained by (\ref{compressible45ths}) for zero-viscosity solutions. The pressure-dilation defect 
in (\ref{pressDildef}) is an additional source of anomalous energy dissipation, with no analogue 
for incompressible fluids.   
\end{remark}

\begin{remark} 
Shock solutions on Euclidean space $\mathbb{R}^d$, as discussed in  \cite{johnson2014closed} and \cite{eyinkdrivascompr}, provide examples  
for which $Q>0$ and $\Sigma>0$ in (\ref{eq:KE*})--(\ref{eq:s*}). It is of some interest to note that for stationary, planar 
shocks in an ideal gas, $Q=\tau(p,\Theta)>0,$ so that the entire contribution to $Q$ is from the 
pressure-dilatation defect. See \cite{eyinkdrivascompr} for this result.   
Although shock solutions with discontinuous state variables $u,$ $\vr,$ $\bu$ provide the simplest examples
of weak Euler solutions with $Q,$ $\Sigma$ positive, presumably positive anomalies can occur even with continuous solutions. 
\end{remark}

We now state an analogue of the Onsager singularity theorem. We prove necessary conditions for anomalous
dissipation involving Besov space exponents, as in the improvement by \cite{CET1994} of Onsager's H\"older-space statement. 
Here we note that the Besov space $B_p^{\sigma,\infty}(O)$ for a general 
open set $O \subset\subset\Gamma$ is made up of measurable functions $f:\Gamma\to \mathbb{R}$ which are finite in the norm:
\be
\| f\|_{B_p^{\sigma,\infty}(O)}:= \| f\|_{L^p(O)} 
+ \sup_{h\in \mathbb{R}^D,|h|<h_O} \frac{\|f(\cdot+h) - f\|_{L^p(O)}}{|h|^\sigma},
\label{besov-def} \ee
for $p\geq 1$ and $\sigma\in (0,1)$ and where $h_O={\rm dist}(O, \partial\Gamma)$.  See \cite{feireisl2016regularity} and, for a general discussion, \cite{triebel2006theory}, \S 1.11.9. 
In this paper, we define a local Besov space:
\be \label{local-besov-def}
B_{p,loc}^{\sigma,\infty}(\Gamma):= \{ f:\Gamma \to \mathbb{R} \ {\rm meas.} \  | \  f\in B_p^{\sigma,\infty}(O), \   \  \forall\ {\rm open}\  O\subset\subset \Gamma\}.
\ee  
Again, whenever $\bar{\Gamma}$ is itself compact (e.g. $\bar{\Gamma }= \mathbb{T}^d \times [0,T]$), 
$B_{p,loc}^{\sigma,\infty}(\Gamma)=B_{p}^{\sigma,\infty}(\Gamma)$.

\begin{theorem}\label{Onsager2}
Let $u,\vr,\bu\in L^\infty(\Omega \times (0,T))$ be any weak solution of the compressible Euler system (\ref{E-eq:rho})--(\ref{E-eq:TE}) satisfying $\vr\geq\vr_0>0$, Assumption \ref{smoothAss}, and additionally
$$
u \in  B_{p,loc}^{\sigma_p^u,\infty}(\Omega\times (0,T)), \ \ \vr \in   B_{p,loc}^{\sigma_p^\vr,\infty}(\Omega\times(0,T)),  \ \ 
\bu \in  B_{p,loc}^{\sigma_p^v,\infty} (\Omega\times (0,T)),
$$
with all three of the following conditions satisfied 
 \begin{eqnarray}
 \label{sigineq1}
 2\min\{\sigma_p^u,\sigma_p^\vr\} + \sigma_p^v&>& 1,\\
 \label{sigineq2}
 \min\{ \sigma_p^u,\sigma_p^\vr\} +2\sigma_p^v&>&1, \\
 \label{sigineq3} 
 3\sigma_p^v&>&1 ,
 \label{regass}
 \end{eqnarray} 
for some $p\geq 3.$ Then $Q_{\rm flux}$, $\Sigma_{\rm flux} $ necessarily exist and equal zero.  Further, inviscid 
limit solutions  from Theorem \ref{Onsager1NS}  satisfying exponent conditions (\ref{sigineq1})-(\ref{sigineq3}) 
have 
$$
Q = \Sigma=0 \quad {\rm and } \quad p *\Theta=p \circ \Theta.
$$
Thus,  it is only possible that $Q>0$ or $\Sigma>0$ if at least one of (\ref{sigineq1})--(\ref{sigineq3}) fails to hold
for each $p\geq 3.$
\end{theorem}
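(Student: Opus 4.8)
The plan is to follow the classical Constantin–E–Titi commutator strategy, now applied to the ``inertial'' expressions for $Q_{\rm flux}$ and $\Sigma_{\rm inert}$ furnished by Theorem \ref{Onsager1E}. For a space-time mollifier $\cG_\ell$ write $f_\ell := f*\cG_\ell$ for each of the fields $u,\vr,\bu,p$. Mollifying the Euler system (\ref{E-eq:rho})--(\ref{E-eq:TE}) produces exact balance laws for the smoothed fields in which the only nonlinear terms are Reynolds-type stresses $\tau_\ell(f,g):= (fg)_\ell - f_\ell g_\ell$; the definitions (\ref{eq:Quell}) and (\ref{eq:SigInert}) of $Q_\ell^{\rm flux}$ and $\Sigma_\ell^{\rm inert *}$ are built precisely from such commutators applied to $\vr,\bu,u,p$. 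The core estimate is the standard one: for $f\in B^{\sigma_f,\infty}_{p_f,loc}$ and $g\in B^{\sigma_g,\infty}_{p_g,loc}$ with $1/p_f+1/p_g = 1/r$,
\[
\|\tau_\ell(f,g)\|_{L^r(O)} \;\lesssim\; \ell^{\,\sigma_f+\sigma_g}\,\|f\|_{B^{\sigma_f,\infty}_{p_f}(O')}\|g\|_{B^{\sigma_g,\infty}_{p_g}(O')}
\]
on $O\subset\subset O'\subset\subset\Gamma$, which one proves by the split $\tau_\ell(f,g) = \big((f-f_\ell)(g-g_\ell)\big)_\ell - (f-f_\ell)(g-g_\ell) - (f-f_\ell)\,g_\ell - f_\ell\,(g-g_\ell)$ after recentering, using $\|f-f_\ell\|_{L^{p_f}}\lesssim \ell^{\sigma_f}\|f\|_{B^{\sigma_f,\infty}_{p_f}}$ and $\|\nabla f_\ell\|_{L^{p_f}}\lesssim \ell^{\sigma_f-1}\|f\|_{B^{\sigma_f,\infty}_{p_f}}$, plus Young/Hölder. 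Since all fields are also in $L^\infty$, I can trade integrability for regularity freely: any $B^{\sigma,\infty}_{p}\cap L^\infty$ field lies in $B^{\sigma p/q,\infty}_{q}$ for $q\ge p$, so it suffices to prove the bounds with a single common exponent, and in fact $p\ge 3$ is exactly what is needed to put the relevant cubic combinations into $L^1$.

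The second step is bookkeeping: read off from (\ref{eq:Quell}) and (\ref{eq:SigInert}) which triple products of commutators and gradients appear, and check that each is dominated, up to $L^\infty$ factors, by one of the three scaling combinations $\ell^{2\min\{\sigma^u_p,\sigma^\vr_p\}+\sigma^v_p-1}$, $\ell^{\min\{\sigma^u_p,\sigma^\vr_p\}+2\sigma^v_p-1}$, $\ell^{3\sigma^v_p-1}$. Schematically, the energy flux $Q_\ell^{\rm flux}$ and the inertial entropy production are built from terms of the type $\nabla\bu_\ell : \tau_\ell(\vr\bu,\bu)$, $\nabla\bu_\ell\,\tau_\ell(\vr,\bu\!\cdot\!\bu)$-style contractions, and $\nabla T_\ell\cdot\tau_\ell(\cdots)$; because $p,T,s$ are smooth functions of $(u,\vr)$ on the open neighborhood $U\supset K$ by Assumption \ref{smoothAss}, composition with these $C^M$ functions ($M\ge 2$) preserves the Besov class with the same exponent (this is the only place smoothness of the equation of state enters), so $p_\ell, T_\ell, s_\ell$ inherit $\sigma^u_p\wedge\sigma^\vr_p$. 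Counting factors: a term with $k$ velocity commutators/gradients and $3-k$ thermodynamic ones scales like $\ell^{(3-k)(\sigma^u_p\wedge\sigma^\vr_p)+k\sigma^v_p-1}$ for $k=1,2,3$, and (\ref{sigineq1})--(\ref{sigineq3}) are exactly the statements that these three exponents are strictly positive. Hence every contributing term is $o(1)$ in $L^1_{loc}$, giving $Q_\ell^{\rm flux}\to 0$ and $\Sigma_\ell^{\rm inert *}\to 0$, and by Theorem \ref{Onsager1E} the limits $Q_{\rm flux}$ and $\Sigma_{\rm inert}$ exist and vanish.

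For the last sentence of the theorem, the inviscid-limit solutions of Theorem \ref{Onsager1NS} are in particular weak Euler solutions satisfying Assumption \ref{smoothAss} and the same Besov hypotheses, so the first part applies and gives $Q_{\rm flux}=\Sigma_{\rm inert}=0$; combining (\ref{eq:s*Euler}) with (\ref{eq:s*}) forces $\Sigma=\Sigma_{\rm inert}=0$, and combining (\ref{eq:KE*Euler})--(\ref{eq:IE*Euler}) with (\ref{eq:KE*})--(\ref{eq:IE*}) gives $Q = Q_{\rm flux}+\tau(p,\Theta)$ while $p\circ\Theta$ exists; since $p*\Theta$ also exists, $\tau(p,\Theta)=p*\Theta-p\circ\Theta$, and one checks $p*\Theta=p\circ\Theta$ — hence $\tau(p,\Theta)=0$ and $Q=0$ — by noting that $p\circ\Theta=\Dlim(p_\ell\Theta_\ell)$ where $p_\ell\Theta_\ell = p_\ell\,\diver\bu_\ell$, and that the difference $p^\ve\Theta^\ve - p_\ell\Theta_\ell$ is controlled by commutators of the already-established vanishing type together with the strong $L^p_{loc}$ convergence of the Navier-Stokes fields (Assumption \ref{assum1}), so that the mollified-Euler product and the viscous product have the same limit. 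The contrapositive — $Q>0$ or $\Sigma>0$ requires at least one of (\ref{sigineq1})--(\ref{sigineq3}) to fail for every $p\ge 3$ — is then immediate.

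The main obstacle I anticipate is not any single estimate but the careful accounting in the second step: the compressible flux and inertial-entropy expressions (\ref{eq:Quell}), (\ref{eq:SigInert}) contain more terms than in the incompressible case (products of three genuinely different fields, plus the chain-rule remainders from composing with the equation of state), and one must verify that the \emph{worst} combination in each term is captured by one of the three inequalities rather than by some fourth condition — in particular that no term needs, say, $2\sigma^u_p+\sigma^v_p>1$ separately from (\ref{sigineq1}), which is where the $\min\{\sigma^u_p,\sigma^\vr_p\}$ in the hypotheses is doing real work. Tracking the $C^2$ Taylor remainder of $p(u,\vr)$ and $s(u,\vr)$ under mollification, and showing it contributes at the same $\ell$-order as the leading commutators rather than worse, is the other delicate point.
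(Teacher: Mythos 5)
Your first two steps (the commutator estimates and the term-by-term bookkeeping showing $\|Q_\ell^{\rm flux}\|_{p/3}$ and $\|\Sigma_\ell^{\rm inert *}\|_{p/3}$ are $\cO(\ell^{\theta})$ with $\theta$ one of the three exponents in (\ref{sigineq1})--(\ref{sigineq3})) are essentially the paper's argument, including the point you flag at the end: the term $I_\ell^{\rm flux}=\ul{\beta}_\ell(\ol{p}_\ell-\ul{p}_\ell)\ol{\Theta}_\ell$ has only two factors, and the quadratic gain $\|\ol{p}_\ell-\ul{p}_\ell\|_{p/2}=\cO(\ell^{2\min\{\sigma_p^u,\sigma_p^\vr\}})$ comes from the second-order Taylor estimate on the equation of state (Proposition \ref{hessThermo}), which is exactly why Assumption \ref{smoothAss} demands $M\geq 2$, not merely Besov-stability under $C^1$ composition. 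Up to that point your proposal and the paper coincide.

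The genuine gap is in your closing step for the inviscid-limit solutions. You propose to prove $p*\Theta=p\circ\Theta$ directly, by arguing that $p^\ve\Theta^\ve-\ol{p}_\ell\ol{\Theta}_\ell$ is ``controlled by commutators of the already-established vanishing type together with the strong $L^p_{loc}$ convergence,'' and then to deduce $Q=\tau(p,\Theta)=0$. This step would fail: $\Theta^\ve=\nabla_x\cdot\bu^\ve$ does not converge strongly (its gradients are precisely the quantities that blow up as $\ve\to 0$ so that $Q^\ve$ can have a nonzero limit), and the sub-viscous-scale correlations between $p^\ve$ and $\Theta^\ve$ are invisible to commutator estimates, which only see the Besov regularity of the limiting Euler fields. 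The stationary planar shock is the cautionary example: there $p^\ve\to p$, $\bu^\ve\to\bu$ strongly, yet $Q=\tau(p,\Theta)>0$, so no estimate of the type you invoke can identify $p*\Theta$ with $p\circ\Theta$ without additional input; indeed your route is circular, since $\tau(p,\Theta)=Q-Q_{\rm flux}$, so proving $p*\Theta=p\circ\Theta$ under the Besov hypotheses is equivalent to proving $Q=0$. The missing idea in the paper is thermodynamic, not analytic: from (\ref{dissterms}) one has pointwise $\Sigma^\ve\geq \beta^\ve Q^\ve\geq Q^\ve/T_0$ with $T_0$ a uniform bound on $\|T^\ve\|_\infty$ for $\ve<\ve_0$; passing to the distributional limit gives $0=\Sigma\geq Q/T_0\geq 0$ once the entropy anomaly has been killed via $\Sigma=\Sigma_{\rm inert}=0$ from (\ref{compressible45ths}). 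Hence $Q=0$ first, and only then does $p*\Theta=p\circ\Theta$ follow, as the consequence $\tau(p,\Theta)=Q-Q_{\rm flux}=0$ rather than as its cause. This is also why the theorem genuinely needs all three inequalities (through the entropy estimate) and not merely the two that make $Q_\ell^{\rm flux}$ vanish.
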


\begin{remark}
Our proof of Theorem \ref{Onsager2} generalizes the argument of \cite{CET1994}, which employed a simple 
mollification of the weak Euler solution. In fact, this idea can be exploited to give a new notion of 
``coarse-grained Euler solution'' which we introduce in section \ref{sec:CGWS} and show there to be equivalent 
to the standard notion of ``weak solution,'' not only for compressible Euler equations but for very general 
balance relations. As discussed in \cite{eyinkdrivascompr}, the concept of ``coarse-grained solution'' 
makes connection with renormalization-group methods in physics.  We employ this notion to prove both our 
Theorems \ref{Onsager1NS} and \ref{Onsager2}. Our analysis of compressible Navier-Stokes and Euler 
solutions was directly motivated by the earlier work of Aluie \cite{aluie2013scale}, and 
our theorems generalize previous results for barotropic compressible flow \cite{feireisl2016regularity}. 
It is worth noting that all of our results generalize to relativistic Euler equations in Minkowski spacetime, 
following the discussion in \cite{eyinkdrivasrelat}.  
\end{remark}

\begin{remark}
Our Theorem \ref{Onsager2} is formulated in terms of space-time regularity, whereas the original statement 
of Onsager and most following works have given necessary conditions for anomalous dissipation in terms of 
space-regularity only. Note that our proof of Theorem \ref{Onsager2} requires mollification/coarse-graining in time 
as well as space, and thus space-time regularity is natural for the proof (and also in the relativistic setting). However, 
we  obtain conditions involving space-regularity only from the next theorem. 
Adapting standard definitions, we set:
\begin{eqnarray}  \label{Linfty_into_BesovLoc}
L^\infty((0,T); B_{p,loc}^{s,\infty}(\Omega))&:=&\{f:\Gamma\to \mathbb{R} \ {\rm meas.} \ | \\\nonumber
&& \quad\ \  \sup_{t\in (0,T)}\|f (\cdot,t)\|_{B_{p}^{s,\infty}(O)}<\infty ,\ \  \forall \ {\rm open}\  O\subset\subset \Omega \}.
\end{eqnarray}
With this convention, we have the following result:
\end{remark} 

\begin{theorem}\label{timeReg}
 Let $u,\vr,\bu$ be any weak Euler solution satisfying $\vr\geq\vr_0>0$ and  $\vr,u,\bu\in L^\infty(\Omega \times (0,T))$ together with: 
 $$
 u \in   L^\infty((0,T); B_{p,loc}^{\sigma_p^u,\infty}(\Omega)), \  \vr \in  L^\infty((0,T); B_{p,loc}^{\sigma_p^\vr,\infty}(\Omega)), \ \bu \in  L^\infty((0,T); B_{p,loc}^{\sigma_p^v,\infty} (\Omega)),
$$
for Besov exponents $0\leq \sigma_p^u, \sigma_q^\vr, \sigma_q^v\leq 1$. Then the solutions are also Besov regular 
locally in space-time:
  \begin{eqnarray}
  \label{thm3-2}
   u &\in& B_{p,loc}^{\min\{\sigma_p^\vr, \sigma_p^v,\sigma_p^u\},\infty}(\Omega \times (0,T)),\\
\label{thm3-1}  
\vr &\in& B_{p,loc}^{\min\{\sigma_p^\vr, \sigma_p^v\},\infty}( \Omega \times (0,T)),  \\
\label{thm3-3}    
     \bu &\in& B_{p,loc}^{\min\{\sigma_p^\vr, \sigma_p^v,\sigma_p^u\},\infty}(\Omega \times (0,T)). 
\end{eqnarray}
\end{theorem}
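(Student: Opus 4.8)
\emph{Proof proposal.} The plan is to bootstrap the given uniform‐in‐time spatial regularity into space–time regularity using the \emph{conservative} form of the Euler system. For a bounded weak solution, the density, the momentum, and the total energy $E:=\frac{1}{2}\vr|\bu|^2+u$ satisfy the balance laws $\partial_t\vr+\diver(\vr\bu)=0$, $\partial_t(\vr\bu)+\diver(\vr\bu\bu+p\bI)=0$ and $\partial_t E+\diver\big((p+E)\bu\big)=0$ in $\mathcal{D}'(\Omega\times(0,T))$, each with a \emph{vanishing} right‐hand side; it is essential to use these three and not, say, the internal‐energy balance (\ref{eq:IE*Euler}), whose source is a genuine measure. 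The core of the argument is the following estimate: if $f\in L^\infty(\Omega\times(0,T))$ satisfies $\partial_t f+\diver\bF=0$ in $\mathcal{D}'$ with $f\in L^\infty((0,T);B_{p,loc}^{\sigma,\infty}(\Omega))$ and $\bF\in L^\infty(\Omega\times(0,T))\cap L^\infty((0,T);B_{p,loc}^{\theta,\infty}(\Omega))$ for some $\sigma,\theta\in(0,1]$, then $f\in B_{p,loc}^{\min\{\sigma,\theta\},\infty}(\Omega\times(0,T))$. Granting this, each of (\ref{thm3-2})--(\ref{thm3-3}) follows by applying it to one of the conserved quantities and then undoing the algebra (we take all exponents positive, the case of a zero exponent being trivial).

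To prove the core estimate, mollify $f$ in the space variable only, $f_\ell:=f*\psi_\ell$ with $\psi_\ell(\bx)=\ell^{-d}\psi(\bx/\ell)$. On any $O\times I\subset\subset\Omega\times(0,T)$ the mollified identity $\partial_t f_\ell=-\diver(\bF*\psi_\ell)$ holds, with $f_\ell$ smooth in $\bx$ and right‐hand side in $L^\infty(I;L^p(O))$; hence $f_\ell$ has a representative Lipschitz in $t$ with values in $L^p(O)$, so that for a.e.\ $t,t+\tau\in I$, $\|f_\ell(\cdot,t+\tau)-f_\ell(\cdot,t)\|_{L^p(O)}\le\tau\sup_s\|\diver(\bF*\psi_\ell)(\cdot,s)\|_{L^p(O)}$. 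Since $\int\nabla\psi_\ell=0$, the standard increment representation of $\diver(\bF*\psi_\ell)$ together with the uniform‐in‐time Besov bound on $\bF$ gives $\sup_s\|\diver(\bF*\psi_\ell)(\cdot,s)\|_{L^p(O)}\lesssim\ell^{\theta-1}$, while the uniform‐in‐time Besov bound on $f$ gives $\sup_s\|f(\cdot,s)-f_\ell(\cdot,s)\|_{L^p(O)}\lesssim\ell^{\sigma}$. Thus $\|f(\cdot,t+\tau)-f(\cdot,t)\|_{L^p(O)}\lesssim\ell^{\sigma}+\tau\ell^{\theta-1}$ for all small $\ell$; optimizing at $\ell\sim\tau^{1/(1+\sigma-\theta)}$ (legitimate since $1+\sigma-\theta>0$) yields $\lesssim\tau^{\sigma/(1+\sigma-\theta)}$, and the elementary inequality $\sigma/(1+\sigma-\theta)\ge\min\{\sigma,\theta\}$ shows the pure‐time increment of $f$ is $O(\tau^{\min\{\sigma,\theta\}})$ in $L^p(O\times I)$. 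Splitting a general space–time increment into its spatial part --- bounded by $|h|^{\sigma}\lesssim|h|^{\min\{\sigma,\theta\}}$ uniformly in $t$ --- and its temporal part just estimated gives $f\in B_{p,loc}^{\min\{\sigma,\theta\},\infty}(\Omega\times(0,T))$.

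The applications use two standard facts, valid for every $p\ge1$ and $s\in(0,1]$ on space and space–time domains alike: a product of bounded functions in $B_{p,loc}^{s_1,\infty}$ and $B_{p,loc}^{s_2,\infty}$ lies in $B_{p,loc}^{\min\{s_1,s_2\},\infty}$, and a $C^M$ function ($M\ge2$) of a bounded $B_{p,loc}^{s,\infty}$ field is again in $B_{p,loc}^{s,\infty}$ --- applied to $p=p(u,\vr)$ via Assumption \ref{smoothAss}, to $1/\vr$ since $\vr\ge\vr_0>0$, and to squares. Writing $\gamma:=\min\{\sigma_p^u,\sigma_p^\vr,\sigma_p^v\}$, these give, uniformly in $t$: $\vr\bu\in B_{p,loc}^{\min\{\sigma_p^\vr,\sigma_p^v\},\infty}(\Omega)$, both $\vr\bu\bu+p\bI$ and $(p+E)\bu$ in $B_{p,loc}^{\gamma,\infty}(\Omega)$, and $E=\frac{1}{2}\vr|\bu|^2+u\in B_{p,loc}^{\gamma,\infty}(\Omega)$. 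Applying the core estimate to $\vr$ (continuity equation; $\sigma=\sigma_p^\vr$, $\theta=\min\{\sigma_p^\vr,\sigma_p^v\}$) yields (\ref{thm3-1}); applying it to $\vr\bu$ (momentum equation; $\sigma=\min\{\sigma_p^\vr,\sigma_p^v\}$, $\theta=\gamma$) gives $\vr\bu\in B_{p,loc}^{\gamma,\infty}(\Omega\times(0,T))$, whence $\bu=(\vr\bu)(1/\vr)\in B_{p,loc}^{\gamma,\infty}(\Omega\times(0,T))$ by the product and composition facts together with (\ref{thm3-1}), which is (\ref{thm3-3}); applying it to $E$ (energy equation; $\sigma=\theta=\gamma$) gives $E\in B_{p,loc}^{\gamma,\infty}(\Omega\times(0,T))$, whence $u=E-\frac{1}{2}\vr|\bu|^2\in B_{p,loc}^{\gamma,\infty}(\Omega\times(0,T))$, which is (\ref{thm3-2}).

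\textbf{Main obstacle.} The substance is entirely in the core estimate. Two points there require care: first, the rigorous passage from the distributional identity $\partial_t f_\ell=-\diver(\bF*\psi_\ell)$ to a Lipschitz‐in‐time, $L^p$‐valued representative of $f_\ell$, which is what licenses bounding the time increment of $f_\ell$ by the time integral of $\|\partial_t f_\ell\|_{L^p}$; and second, extracting the \emph{sharp} gain --- verifying $\sigma/(1+\sigma-\theta)\ge\min\{\sigma,\theta\}$ (with equality only if $\sigma=\theta$ or $\theta=1$), so that one does not settle for the weaker $O(\tau^{\sigma/(1+\sigma)})$ obtained by discarding the regularity of the flux. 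The remainder is bookkeeping of the product/composition estimates and of the compact exhaustions in (\ref{local-besov-def}).
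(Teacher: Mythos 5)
Your proposal is correct and follows essentially the same route as the paper: the paper's Theorem 4* is exactly your ``core estimate'' (stated for a general system of balance laws rather than a scalar one), proved by the same Minkowski splitting, spatial mollification, bounding the time increment of the mollified field by $|\tau|\,\ell^{\theta-1}$ via the equation and the fluctuation by $\ell^{\sigma}$, and then recovering $\bu$ and $u$ from the conserved variables $(\vr,\bj,E)$ by the same product/composition (Lemma \ref{thermobnd}/Corollary \ref{prodBes}) arguments. The only cosmetic difference is in the optimization: the paper takes $\ell\propto|\tau|$ after replacing both exponents by their minimum, while you optimize at $\ell\sim|\tau|^{1/(1+\sigma-\theta)}$ and invoke the (correct) inequality $\sigma/(1+\sigma-\theta)\geq\min\{\sigma,\theta\}$, which yields the same conclusion.
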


\begin{remark}
This result is very similar to that obtained in recent work of P. Isett for H\"{o}lder-continuous weak solutions of incompressible 
Euler \cite{isett2013}, and the proof is almost the same. In fact, we shall derive Theorem \ref{timeReg} as a consequence 
of a more general result which derives time-regularity from space-regularity for a wide class of weak balance equations. 
\end{remark} 

\begin{remark}
It is interesting to know how sharp are the necessary conditions for anomalous dissipation following from 
Theorems \ref{Onsager2} and \ref{timeReg}. 
While answering this question for the incompressible case has required more sophisticated tools 
\cite{isett2012,isett2014,Buckmaster13,DeLellisSzekelyhidi10}, we have a very cheap argument showing that 
our conditions are sharp for $p=3$ and $\Omega=\mathbb{R}^d$.  In fact, the stationary planar shock
solutions for an ideal gas in \cite{johnson2014closed,eyinkdrivascompr} are obtained as strong limits of compressible 
Navier-Stokes solutions for vanishing viscosity and satisfy $u,\vr,\bu\in (BV_{loc}\cap L^\infty)(\mathbb{R}^d).$ 
These provide a simple example of dissipative Euler solutions saturating our bounds, since $(BV_{loc}\cap L^\infty)(\Omega)\subset B_{p,loc}^{1/p,\infty}(\Omega),$ 
$p\geq 1$ 
by the argument of \cite{feireisl2016regularity}, Proposition 2.1. That paper stated this result only for $\Omega=\mathbb{T}^d,$
but the proof rests on a standard approximation theorem for $BV$ functions that holds for any open $O\subset \mathbb{R}^d$ 
(see e.g. \cite{evans2015measure}, Thm. 2  of \S 5.2.2, or \cite{Ziemer1989}, Thm. 5.3.3). For $p=3$ this means that we may 
take $\sigma_3^u=\sigma_3^\vr=\sigma_3^v=1/3$ and then (\ref{sigineq1})--(\ref{sigineq3}) are satisfied as equalities.  
For $p>3$, the sharpness of our results for solutions on $\mathbb{R}^d$ remains an open issue. Note that 
a standard Besov embedding gives $B^{\sigma,\infty}_{p,loc}(\Omega)\subset C^{\sigma-d/p}_{loc}(\Omega)$ and 
$B^{\sigma,\infty}_{p,loc}(\Omega\times (0,T))\subset C^{\sigma-(d+1)/p}_{loc}(\Omega\times (0,T))$ 
(see \cite{triebel2006theory}, \S 1.11.1). Thus, if our necessary conditions are sharp, 
then dissipative solutions at the critical values for sufficiently large $p$ must be H\"older-continuous.

No stationary Euler solution can illustrate the sharpness of our results, if 
a finite entropy $S=\int d^dx\, s$ and bounded velocities are required. 
If $(1\wedge |\bx|^{-1})s\,\bu \in L^1(\mathbb{R}^d),$ then $\nabla_x\cdot (s\bu)=\Sigma\geq 0$ only for 
$\Sigma\equiv 0.$ This follows by smearing the stationary entropy balance with $\phi(|\bx|/R)$ for $\phi\in C^\infty_c({\mathbb R}^+,{\mathbb R}^+)$
with $\phi(r)=1$ for $r<1,$ $\phi(r)=0$ for $r>2,$ so $\int d^dx\, \Sigma=\lim_{R\to\infty} -\int_{R<|\bx|<2R} d^dx\,
\frac{1}{R}\phi'\left(\frac{|\bx|}{R}\right) s v_r,$ with $v_r$ the radial component of $\bu.$ Thus, $\int d^dx\, \Sigma=0$ with the integrability  
assumption on $s\,\bu,$ e.g. for $\bu\in L^\infty(\mathbb{R}^d)$ and $s\in L^1(\mathbb{R}^d).$ The 
sharpness of our conditions thus remains open for all $p\geq 3$ with such solutions on $\mathbb{R}^d.$ Likewise, 
the question remains open  for Euler solutions on $\mathbb{T}^d$.  No stationary shock examples of the type  
discussed in \cite{johnson2014closed,eyinkdrivascompr} can exist on the torus, since the anomalous entropy production 
in a stationary solution must arise from positivity of the space-divergence of the entropy current, which necessarily vanishes for periodic 
solutions. (We owe both of the above observations to an anonymous referee).  On the other hand, turbulent solutions of the compressible 
Navier-Stokes equation observed in numerical simulations on the torus appear to exhibit non-stationary shocks (e.g. \cite{kim2005density}).
We therefore expect that such shock solutions again illustrate sharpness of our results for $p=3$ and $\Omega=
\mathbb{R}^d$ or $\mathbb{T}^d,$
but the rigorous mathematical construction of such non-stationary solutions will be more involved.
\end{remark}



The detailed contents of the present paper are as follows: 
In section \ref{sec:CGWS} we introduce the space-time coarse-graining operation and prove the equivalence of distributional and coarse-grained solutions. 
In section \ref{sec:CGNS} we derive balance equations for the coarse-grained compressible Navier-Stokes system.
In section \ref{sec:proofs} we establish auxiliary commutator estimates necessary for our main theorems.
In sections \ref{sec:proofThm1}--\ref{sec:proofThm3} we prove Theorems \ref{Onsager1E}--\ref{timeReg}.


\section{Coarse-Grained Solutions and Weak Solutions}\label{sec:CGWS}

We are concerned in this section with general balance equations of the form 
\be \partial_t \bv + \nabla_x\cdot \bF=\bzed \label{gen-bal} \ee 
on a space-time domain $\Omega\times\mathbb{R}$ where again either 
$\Omega=\mathbb{T}^d$ or $\mathbb{R}^d,$ for simplicity, and $\bv\in \mathbb{R}^m$ and $\bF\in \mathbb{R}^{d\times m}.$
As usual, one defines $(\bv,\bF)$ to be a {\it weak/distributional solution} of (\ref{gen-bal}) iff
\be \langle \partial_t\varphi,\bv\rangle + \langle \nabla_x\varphi;\bF\rangle=\bzed, \quad \forall
\varphi\in D(\Omega\times \mathbb{R}), \label{gb-dist} \ee
where the space $D(\Omega\times \mathbb{R})=C_c^\infty(\Omega\times \mathbb{R})$ of test functions 
consists of $C^\infty$ functions $\varphi$ compactly supported in space-time, provided the  
topology defined by uniform convergence of functions and all their derivatives on compact 
sets containing all the supports.  Components $u_a,F_{ia}$ belong to the space $D'(\Omega\times \mathbb{R})$ 
of continuous linear functionals on $D(\Omega\times \mathbb{R})$, with  
$\langle \partial_t\varphi,\bv\rangle_a=\langle \partial_t\varphi,u_a\rangle$ and 
$\langle \nabla_x\varphi;\bF\rangle_a=\sum_{i=1}^d\langle \nabla_{x_i}\varphi,F_{ia}\rangle$ 
for $a=1,\dots,m.$ For these standard notions, e.g. see \cite{showalter2011hilbert,rudin2006functional}. 
We offer here a slightly different point of view on these topics. 

Let $\cG$ be a standard space-time \emph{mollifier}, with $\cG\in D(\Omega\times \mathbb{R}),$ $\cG\geq 0,$ and also
$\int_\Omega d^dr\int_{\mathbb{R}} \rmd \tau\ \cG(\br,\tau)=1.$ To simplify certain estimates we also assume, 
without loss of generality, that ${\rm supp}(\cG)$ is contained in the Euclidean unit ball in $(d+1)$ dimensions. 
Define the dilatation $\cG_\ell(\br,\tau)=\ell^{-(d+1)} \cG(\br/\ell, \tau/\ell)$ and space-time reflection 
$\check{\cG}(\br,\tau)=\cG(-\br,-\tau)$. For any $\bv\in D'(\Omega\times \mathbb{R})$ we define 
its \emph{coarse-graining at scale $\ell$} by 
\be \bar{\bv}_\ell=\check{\cG}_\ell *\bv\in C^\infty(\Omega\times \mathbb{R}). \label{cg-def} \ee 
Here $\ast$ denotes the convolution defined by 
\be (\check{\cG}_\ell *\bv)(\bx,t) = \langle S_{\bx,t}\cG_\ell,\bv\rangle \label{convdef1} \ee
for shift operator $(S_{\bx,t}\cG_\ell)(\br,\tau)=\cG_\ell(\br-\bx,\tau-t)$ or, equivalently, by 
\be \langle \varphi, \check{\cG}_\ell *\bv\rangle=\langle\varphi*\cG_\ell,\bv\rangle \label{convdef2} \ee  
for all test functions $\varphi\in D(\Omega\times \mathbb{R}).$ See \cite{rudin2006functional}. 
We say that $(\bv,\bF)$ are a {\it (space-time) coarse-grained solution} of (\ref{gen-bal}) iff 
\be \partial_t \bar{\bv}_\ell + \nabla_x\cdot \bar{\bF}_\ell=\bzed \label{gb-cg} \ee 
holds pointwise in space-time for all $\ell>0.$ We then have: 

\begin{proposition}\label{CGequiv}
$(\bv,\bF)$ are a distributional solution of (\ref{gen-bal}) on $\Omega\times \mathbb{R}$ iff 
$(\bv,\bF)$ are a coarse-grained solution of (\ref{gen-bal}) on $\Omega\times \mathbb{R}$
\end{proposition}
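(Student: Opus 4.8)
The plan is to prove the two implications separately, in each case reducing everything to the adjoint relation (\ref{convdef2}), the fact that convolution commutes with differentiation, and the convergence of mollifications in the topology of $D(\Omega\times\mathbb{R})$. Since all operations are linear and act componentwise, I can argue as if $\bv$ and $\bF$ were scalar.

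First I would show that a distributional solution is a coarse-grained solution. Fix $\ell>0$. Because $\bar{\bv}_\ell$ and $\bar{\bF}_\ell$ are $C^\infty$, the pointwise identity (\ref{gb-cg}) is equivalent to the assertion that the smooth function $\partial_t\bar{\bv}_\ell+\nabla_x\cdot\bar{\bF}_\ell$ annihilates every test function $\varphi\in D(\Omega\times\mathbb{R})$. Integrating by parts (legitimate since all objects are smooth) and then applying (\ref{convdef2}) gives
\be \langle\varphi,\,\partial_t\bar{\bv}_\ell+\nabla_x\cdot\bar{\bF}_\ell\rangle = -\langle(\partial_t\varphi)*\cG_\ell,\,\bv\rangle-\langle(\nabla_x\varphi)*\cG_\ell;\,\bF\rangle. \ee
Since convolution commutes with derivatives, $(\partial_t\varphi)*\cG_\ell=\partial_t\psi$ and $(\nabla_x\varphi)*\cG_\ell=\nabla_x\psi$ where $\psi:=\varphi*\cG_\ell$, and $\psi$ is itself an admissible test function: it is $C^\infty$ and compactly supported, with ${\rm supp}(\psi)\subseteq{\rm supp}(\varphi)+{\rm supp}(\cG_\ell)$. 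Hence the right-hand side equals $-\langle\partial_t\psi,\bv\rangle-\langle\nabla_x\psi;\bF\rangle$, which vanishes by the distributional solution property (\ref{gb-dist}). As $\varphi$ was arbitrary, (\ref{gb-cg}) follows.

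For the converse I would assume $(\bv,\bF)$ is a coarse-grained solution. Given $\varphi\in D(\Omega\times\mathbb{R})$, I pair the pointwise identity (\ref{gb-cg}) with $\varphi$, integrate by parts, and use (\ref{convdef2}) once more to obtain, for every $\ell>0$,
\be \langle(\partial_t\varphi)*\cG_\ell,\,\bv\rangle+\langle(\nabla_x\varphi)*\cG_\ell;\,\bF\rangle=\bzed. \ee
It then remains to let $\ell\to0$. Standard properties of mollification give $(\partial_t\varphi)*\cG_\ell\to\partial_t\varphi$ and $(\nabla_x\varphi)*\cG_\ell\to\nabla_x\varphi$ in $D(\Omega\times\mathbb{R})$: for all small $\ell$ the supports remain inside a fixed compact neighborhood of ${\rm supp}(\varphi)$, and on that set the convolutions converge uniformly together with all their derivatives. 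Since $\bv$ and $\bF$ are continuous linear functionals on $D(\Omega\times\mathbb{R})$, passing to the limit yields $\langle\partial_t\varphi,\bv\rangle+\langle\nabla_x\varphi;\bF\rangle=\bzed$, which is exactly (\ref{gb-dist}). Equivalently, (\ref{convdef2}) combined with $\varphi*\cG_\ell\to\varphi$ in $D$ shows $\bar{\bv}_\ell\to\bv$ and $\bar{\bF}_\ell\to\bF$ in $D'(\Omega\times\mathbb{R})$, and one applies this to the distributional form of (\ref{gb-cg}).

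I do not expect a serious obstacle here: the statement is a soft duality computation, each direction amounting to a couple of lines once the definitions are unwound. The only points demanding care are bookkeeping ones: that $\varphi*\cG_\ell$ remains compactly supported (hence is a legitimate test function — this uses only the compact-support structure of $D$, not anything special about $\Omega=\mathbb{T}^d$ or $\mathbb{R}^d$), that convolution and differentiation commute, and that $\varphi*\cG_\ell\to\varphi$ in the inductive-limit topology of $D$, i.e.\ with uniform convergence of \emph{all} derivatives on one common compact set. Note finally that the asymmetry of $\cG$ — we never assume $\check{\cG}=\cG$ — causes no trouble, precisely because the reflection is what the adjoint identity (\ref{convdef2}) is designed to absorb.
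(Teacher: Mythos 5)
Your proof is correct and follows essentially the same route as the paper: both directions rest on the adjoint identity (\ref{convdef2}) together with the convergence $\varphi*\cG_\ell\to\varphi$ (and of its derivatives) in $D(\Omega\times\mathbb{R})$, and your converse direction is exactly the paper's argument. The only cosmetic difference is in the forward direction, where the paper simply inserts $\varphi=S_{\bx,t}\cG_\ell$ into (\ref{gb-dist}) and reads off (\ref{gb-cg}) pointwise via (\ref{convdef1}), whereas you test the smooth residual $\partial_t\bar{\bv}_\ell+\nabla_x\cdot\bar{\bF}_\ell$ against arbitrary $\varphi$ and invoke the fundamental lemma of the calculus of variations --- equivalent, just one step less direct.
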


\begin{proof} 
If $(\bv,\bF)$ satisfy (\ref{gen-bal}) weakly, then taking $\varphi=S_{\bx,t}\cG_\ell$ in (\ref{gb-dist})
for any space-time point $(\bx,t)$ implies (\ref{gb-cg}) by the definition (\ref{convdef1}) of 
the convolution.

On the other hand, suppose that $(\bv,\bF)$ are a coarse-grained solution of (\ref{gen-bal}). 
Smearing (\ref{gb-cg}) with an arbitrary test function 
$\varphi\in D(\Omega\times \mathbb{R}),$ then gives by the second definition (\ref{convdef2}) of convolution that 
\be 
\langle (\partial_t\varphi)*\cG_\ell,\bv\rangle + \langle (\nabla_x\varphi)*\cG_\ell;\bF\rangle=0. \ee
However, in the limit $\ell\rightarrow 0,$ then  $(\partial_t\varphi)*\cG_\ell\rightarrow \partial_t\varphi$ 
and  $(\nabla_x\varphi)*\cG_\ell\rightarrow\nabla_x\varphi$ in the standard Fr\'echet topology on test functions.
Since $\bv,$ $\bF\in D'(\Omega\times\mathbb{R})$ are, by definition, continuous functionals on  $D(\Omega\times\mathbb{R}),$
the equation (\ref{gb-dist}) of the standard weak formulation immediately follows. 
\hfill $\Box$ \end{proof} 

This equivalence extends to solutions with prescribed initial-data. A standard approach to  
define weak solutions $(\bv,\bF)$ of (\ref{gen-bal}) on space-time domain $\Omega\times [0,\infty)$
with initial data $\bv_0\in D'(\Omega)$  is 
to require that 
\be \langle \partial_t\varphi,\bv\rangle + \langle \nabla_x\varphi;\bF\rangle 
+ \langle\varphi(\cdot,0),\bv_0\rangle=\bzed, \quad \forall \varphi\in D(\Omega\times [0,\infty)). 
\label{wksol-def-id} \ee
Here the space $D(\Omega\times [0,\infty))$ is taken to consist of piecewise-smooth 
functions of the form $\varphi(\bx,t)=\theta(t)\phi(\bx,t),$ products of the Heaviside step function $\theta(t)$ and 
some $\phi\in D(\Omega\times \mathbb{R}).$ Such test functions $\varphi\in D(\Omega\times [0,+\infty))$ 
are \emph{causal}, with $\varphi(\bx,t)=0$ for $t<0.$ In order to make the lefthand side of (\ref{wksol-def-id})
meaningful, a stronger assumption is required than only $(\bv,\bF)\in D'(\Omega\times\mathbb{R})$. 
A very general assumption is that distributional products $\theta\odot \bv,$ $\theta\odot\bF$ exist defined by 
$\theta\odot f :=\Dlim_{\ell\to 0} \theta \ol{f}_\ell$ for $f\in D'(\Omega\times\mathbb{R})$
\cite{Oberguggenberger92}. In that case, we can take 
\be\label{definLowReg}
 \langle \partial_t\varphi,\bv\rangle :=  \langle \partial_t \phi, \theta\odot \bv\rangle, 
 \quad  \langle \nabla_x\varphi;\bF\rangle := \langle \nabla_x\phi;\theta\odot \bF\rangle. 
\ee
Because limit distributions $\theta\odot f$ clearly have support in $\Omega\times [0,\infty),$ the 
definition (\ref{definLowReg}) does not depend upon the choice of $\phi$ such that $\varphi=\theta\phi.$
In the special case when $f=\bv,\bF\in L_{loc}^1(\Omega\times [0,\infty))$, then strong convergence 
of $\ol{f}_\ell\to f$ in $ L_{loc}^1$ (e.g. see Lemma 7.2 of \cite{gilbarg2015elliptic}) implies that the definitions 
(\ref{definLowReg}) reduce to their standard interpretation.  
In addition,to make the definition (\ref{wksol-def-id}) meaningful, 
one must require weak-$\ast$ continuity of the distribution $\bv$ in time, so that $t\mapsto\langle \psi,\bv(\cdot,t)\rangle$ 
is continuous for all $\psi\in D(\Omega).$ Initial data is then achieved in the sense that 
\be \lim_{t\rightarrow 0+} \langle \psi,\bv(\cdot,t)\rangle=\langle \psi,\bv_0\rangle, \quad \forall \psi\in D(\Omega).
\label{dist-id} \ee
The coarse-graining approach can be also carried over with only minor changes. 
The mollifier $\cG$ must now be chosen to be \emph{strictly causal}, with $\cG\in D(\Omega\times (0,\infty))$
and thus $\cG(\br,\tau)\equiv 0$ for $\tau\leq 0.$ The definition (\ref{cg-def}) of coarse-graining still applies, noting that the convolution in 
time is $(\chi_1*\chi_2)(t)=\int_0^t \rmd s\ \chi_1(s)\chi_2(t-s)$ for causal functions $\chi_1,\chi_2.$
We can again define $(\bv,\bF)$ to be a coarse-grained solution of (\ref{gen-bal}) if (\ref{gb-cg}) holds 
pointwise in space-time for all $\ell>0.$ Since $\ol{\bv}_\ell\in C^\infty(\Omega\times [0,\infty))$,
the functions $\ol{\bv}_\ell(\cdot,0)\in C^\infty(\Omega)$ are well-defined and 
the coarse-grained solution is naturally said to take on initial data $\bv_0\in D'(\Omega)$ when 
\be \Dlim_{\ell\to 0}\ol{\bv}_\ell(\cdot,0)=\bv_0. \label{cg-id} \ee  
It is straightforward to see for all $\psi\in D(\Omega)$ that 
\be \langle \psi, \ol{\bv}_\ell\rangle = \int d^dr \int_0^\infty d\tau\ \cG_\ell(\br,\tau) \Psi(\br,t), \quad
\Psi(\br,\tau):=\langle S_{\br}\psi,\bv(\cdot,\tau)\rangle.  \label{Psi-def} \ee
Suppose that one requires not only weak-$\ast$ continuity of $\bv$ in time, but also the stronger statement 
that $\Psi(\br,\tau)$ defined in (\ref{Psi-def}) is jointly continuous in $(\br,\tau)$ for all $\psi\in D(\Omega).$
The initial data prescribed by (\ref{definLowReg}) and (\ref{cg-id}) are then the same. 

This leads to: 
\begin{proposition}
If $(\bv,\bF)$ is a coarse-grained solution of (\ref{gen-bal}) on $\Omega\times [0,\infty)$ with initial data $\bv_0,$
then it is a distributional solution with the same initial data. If also $\langle S_{\br}\psi,\bv(\cdot,\tau)\rangle$ is 
jointly continuous in $(\br,\tau)$ for all $\psi\in D(\Omega),$ then a distributional solution $(\bv,\bF)$ of (\ref{gen-bal}) 
on $\Omega\times [0,\infty)$ with initial data $\bv_0$ is a coarse-grained solution with the same initial data. 
\end{proposition}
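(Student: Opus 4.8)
The plan is to adapt the proof of Proposition~\ref{CGequiv}; the only features not already present there are the boundary term at $t=0$ generated by integrating by parts in time, and the need to reconcile the two notions of attained initial data, (\ref{dist-id}) versus (\ref{cg-id}).

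For the first implication I would start from a coarse-grained solution $(\bv,\bF)$ on $\Omega\times[0,\infty)$ with datum $\bv_0$. Strict causality of $\cG$ guarantees that, for every $\ell>0$, the smooth fields $\ol{\bv}_\ell,\ol{\bF}_\ell$ are defined on all of $\Omega\times[0,\infty)$ — the convolution samples $\bv,\bF$ only at times $\tau>t\geq 0$ — and that $\partial_t\ol{\bv}_\ell+\nabla_x\cdot\ol{\bF}_\ell=\bzed$ holds there. Given $\phi\in D(\Omega\times\mathbb{R})$, I would multiply this identity by $\phi$, integrate over $\Omega\times[0,\infty)$, and integrate by parts in $t$ and $x$: the boundary term at $t=\infty$ vanishes by compact support, while the one at $t=0$ produces $-\int_\Omega\phi(\bx,0)\cdot\ol{\bv}_\ell(\bx,0)\,d^dx$, giving
\[
\int_0^\infty\!\!\int_\Omega\big(\partial_t\phi\cdot\ol{\bv}_\ell+\nabla_x\phi:\ol{\bF}_\ell\big)\,d^dx\,dt=-\int_\Omega\phi(\bx,0)\cdot\ol{\bv}_\ell(\bx,0)\,d^dx.
\]
Passing $\ell\to 0$, the right side tends to $-\langle\phi(\cdot,0),\bv_0\rangle$ by the definition (\ref{cg-id}) of coarse-grained initial data, while the left side, rewritten as $\langle\partial_t\phi,\theta\ol{\bv}_\ell\rangle+\langle\nabla_x\phi;\theta\ol{\bF}_\ell\rangle$, tends to $\langle\partial_t\phi,\theta\odot\bv\rangle+\langle\nabla_x\phi;\theta\odot\bF\rangle$ by the defining property of the product $\odot$. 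With $\varphi=\theta\phi$ this is exactly the weak formulation (\ref{wksol-def-id})--(\ref{definLowReg}) with the same datum. (If one insists on the individual products $\theta\odot\bv,\theta\odot\bF$ existing separately — which is in any case needed for (\ref{wksol-def-id}) to be well posed — the displayed identity at least shows their gradient-smeared combination converges, which is all (\ref{wksol-def-id}) tests; in the application of interest $\bv,\bF\in L^\infty$ so the point is moot.)

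For the converse, starting from a distributional solution $(\bv,\bF)$ with datum $\bv_0$, I would first restrict (\ref{wksol-def-id}) to test functions $\varphi=\phi$ supported in $\Omega\times(0,\infty)$: for these $\varphi(\cdot,0)=0$, and $\theta\odot\bv$, $\theta\odot\bF$ coincide with $\bv$, $\bF$ on the support of $\phi$, so $\langle\partial_t\phi,\bv\rangle+\langle\nabla_x\phi;\bF\rangle=0$, i.e. $(\bv,\bF)$ solves (\ref{gen-bal}) weakly on the open set $\Omega\times(0,\infty)$. Exactly as in Proposition~\ref{CGequiv}, testing against $\varphi=S_{\bx,t}\cG_\ell$ — a legitimate element of $D(\Omega\times(0,\infty))$ for every $t\geq0$, since strict causality places its support in $\tau\in(t,t+\ell)\subset(0,\infty)$ — yields $\partial_t\ol{\bv}_\ell+\nabla_x\cdot\ol{\bF}_\ell=\bzed$ pointwise on $\Omega\times[0,\infty)$ for all $\ell>0$. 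For the initial data I would use the identity (\ref{Psi-def}): for $\psi\in D(\Omega)$, $\langle\psi,\ol{\bv}_\ell(\cdot,0)\rangle=\int d^dr\int_0^\infty d\tau\,\cG_\ell(\br,\tau)\,\Psi(\br,\tau)$ with $\Psi(\br,\tau)=\langle S_\br\psi,\bv(\cdot,\tau)\rangle$; since $\cG_\ell$ is a mollifier concentrating at the point $(\bzed,0^+)$ and $\Psi$ is jointly continuous by hypothesis, the right side converges to $\Psi(\bzed,0)$, which equals $\langle\psi,\bv_0\rangle$ by the weak-$\ast$ continuity (\ref{dist-id}). Hence $\ol{\bv}_\ell(\cdot,0)\to\bv_0$ in $D'(\Omega)$, which is (\ref{cg-id}).

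I expect the one genuinely delicate step to be the last: the coarse-grained datum $\ol{\bv}_\ell(\cdot,0)$ is a space-time average of $\Psi$ over a shrinking ball abutting the hyperplane $\{t=0\}$, whereas the distributional datum is the one-sided temporal limit $\lim_{t\to0^+}\langle\psi,\bv(\cdot,t)\rangle$, and identifying the two genuinely requires control of $\Psi$ jointly in $(\br,\tau)$ rather than mere separate continuity in $\tau$ — which is precisely why this joint continuity enters as a hypothesis only for the converse direction. Everything else (the integrations by parts, the $\ell\to 0$ limits, and the reduction of the converse to the no-data case already handled in Proposition~\ref{CGequiv}) should be routine.
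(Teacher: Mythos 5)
Your argument is correct and essentially reproduces the paper's own proof: for the first direction, multiplying the pointwise coarse-grained equation by a test function, integrating by parts in time to pick up the boundary term $\langle\phi(\cdot,0),\ol{\bv}_\ell(\cdot,0)\rangle$, and passing $\ell\to 0$ via (\ref{definLowReg}) and (\ref{cg-id}); for the converse, testing (\ref{wksol-def-id}) with the strictly causal $\varphi=S_{\bx,t}\cG_\ell$ and recovering the initial data from (\ref{Psi-def}) together with the joint continuity of $\Psi$. The extra detail you supply (smoothness of $\ol{\bv}_\ell$ up to $t=0$, the concentration argument for the mollifier at $(\bzed,0^+)$, and the remark on the products $\theta\odot\bv,\theta\odot\bF$) is consistent with, and slightly more explicit than, the paper's treatment.
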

\begin{proof}
To prove the first statement, multiply the coarse-grained equation (\ref{gb-cg}) with the Heaviside 
function $\theta$ and then smear with an arbitrary $\phi\in 
D(\Omega\times \mathbb{R}).$ An integration-by-parts in time gives that 
$$ \langle (\partial_t\phi),\theta\overline{\bv}_\ell\rangle + \langle (\nabla_x\phi);\theta\overline{\bF}_\ell\rangle
+\langle \phi(\cdot,0), \ol{\bv}_\ell\rangle =0. $$
Taking the limit $\ell\rightarrow 0$ with definition (\ref{definLowReg}) and assumption (\ref{cg-id}) 
recovers (\ref{wksol-def-id}).

For the second statement,  take $\varphi=S_{\bx,t}\cG_\ell \in D(\Omega\times (0,\infty))$
for any $\bx\in\Omega$ and $t\geq 0.$ 
We see that $\varphi$ is strictly causal, i.e. $\varphi(\cdot,0)=0.$ The equation 
(\ref{wksol-def-id}) of the weak formulation thus yields the coarse-grained equation (\ref{gb-cg})  
for that choice of $(\bx,t)$ and $\ell.$  Furthermore, because of (\ref{Psi-def}) 
and the joint continuity of $\langle S_{\br}\psi,\bv(\cdot,\tau)\rangle$ in $(\br,\tau),$
$\ol{\bv}_\ell(\cdot,0)\Dto \bv_0$ holds for the same $\bv_0$ given by (\ref{dist-id}).  \hfill $\Box$ \end{proof}

\begin{remark}
If $\bv \in C([0,\infty);L^p(\Omega))$ with continuity in the strong $L^p$-norm topology
for some $p\geq 1$, then the joint continuity follows from the obvious continuity of $\Psi(\br,\tau)$ 
in $\br$ for each $\tau$ and the H\"older inequality 
$$|\Psi(\br,\tau)-\Psi(\br,\tau')|\leq \|\psi\|_q\|\bv(\cdot,\tau)-\bv(\cdot,\tau')\|_p, \quad q=p/(p-1), $$
which implies continuity of $\Psi(\br,\tau)$ in $\tau$ uniform in $\br\in \Omega.$
\end{remark}

\begin{remark} In Lemma 8 of \cite{DeLellisSzekelyhidi10} it was proved that, if 
$(\bv,\bF)$ is a weak solution  with $\bv\in L^\infty([0,\infty),L^2(\Omega))$ and 
$\bF\in L^1_{\rm loc}(\Omega\times [0,\infty)),$ then $\bv$ can always be altered 
on a zero measure set of times so that $\bv\in C_w([0,\infty),L^2(\Omega)),$ 
with continuity in the weak topology of $L^2(\Omega).$ In that case, 
$\Psi(\br,\tau)$ defined for any $\psi\in D(\Omega)$ by (\ref{Psi-def}) is continuous 
in $\tau$ for each $\br\in\Omega.$  By Cauchy-Schwartz, 
$$ |\nabla_r \Psi(\br,\tau)| \leq \|\nabla\psi\|_2  \|\bv\|_{L^\infty([0,\infty);L^2(\Omega))}, $$
so that $\Psi(\br,\tau)$ is also (Lipschitz) continuous in $\br$ uniformly in $\tau,$
and thus is jointly continuous in $(\br,\tau)$ under the same assumptions as in \cite{DeLellisSzekelyhidi10}. 
\end{remark} 

\begin{remark}\label{rem:finiteTime}
The above results hold with only minor modifications for solutions on $\Omega\times [0,T)$ 
with $0<T<\infty.$  Coarse-grained solutions are required now to satisfy 
equations (\ref{gb-cg}) only for $\bx,t$ and $\ell$ such that $S_{\bx,t}\cG_\ell\in D(\Omega\times (0,T)).$ 
On the other hand, for any $\varphi\in D(\Omega\times [0,T)),$ then $T_\varphi=\max\{t: (\bx,t)\in {\rm supp}(\varphi)\}<T$.
Since ${\rm supp}(\cG)$ is contained in the unit ball, then $S_{\bx,t}\cG_\ell\in D(\Omega\times (0,T))$ 
for any $\ell<T-T_\varphi$ and  $(\bx,t)\in {\rm supp}(\varphi)$ and our previous arguments on equivalence
of the two notions of solution can be repeated without change. 
\end{remark}

\begin{remark} In the paper \cite{CET1994}, only space mollification was employed. 
One can also define a space coarse-graining with a standard mollifier $G_\ell(\br)=\ell^{-d}G(\br/\ell),$
that is, $ \hat{\bv}_\ell=\check{G}_\ell *\bv.$ This is a smooth function of space but only a distribution in time. 
In that case, we say that $(\bv,\bF)$ are a {\it (space) coarse-grained solution} of the balance relation 
(\ref{gen-bal}) iff 
\be \partial_t \hat{\bv}_\ell + \nabla_x\cdot \hat{\bF}_\ell=\bzed \label{gb-scg} \ee
holds pointwise in space and distributionally in time for all $\ell>0$. This is also equivalent to the standard notion of 
weak solution, as can be seen by arguments very similar to those given above. 
If furthermore $\bv,$ $\bF\in L^1_{loc}(\Omega\times (0,T)),$ then standard approximation arguments show that 
the time-derivative in (\ref{gb-scg}) can be taken to be a classical derivative at Lebesgue almost all times. 
\end{remark} 

In many applications, including those considered in this paper, $\bv$ is not merely a distribution but a 
measurable function of space-time, and $\bF:=\bF(\bv)$ is a pointwise nonlinear function of $\bv.$ A key aspect of 
the coarse-graining operation is that coarse-graining nonlinear functions of fields generally gives a result different 
from evaluating the 
function at the coarse-grained fields, i.e. the operations of coarse-graining and function-evaluation do not commute.
For simple products of the form $f_1 f_2 \cdots f_n,$ this non-commutation can be measured by 
\emph{coarse-graining cumulants}, which are defined iteratively in $n$ by $\tau_\ell(f)=\bar{f}_\ell$ and
\be  \overline{(f_1 \cdots f_n)}_\ell = \sum_\Pi  \prod_{p=1}^{|\Pi|} \ol{\tau}_\ell(f_{i^{(p)}_1},\dots,f_{i^{(p)}_{n_p}}),  \ee
where the sum is over all partitions $\Pi$ of the set $\{1,2,\dots,n\}$ into $|\Pi|$ disjoint subsets $\{ i^{(p)}_1,\dots,i^{(p)}_{n_p}\},$
$p=1,\dots,|\Pi|.$   See e.g. \cite{huang2009introduction,stuart2009kendall}.  For example, for $n=2$
\be
\ol{(f g)}_\ell  = \ol{f}_\ell\ol{g}_\ell+\ol{ \tau}_\ell ({f}, g) \ \ \ \  {\rm or} \ \ \ \ol{ \tau}_\ell ({f}, g) = \ol{(f  g)}_\ell-\ol{f}_\ell \ol{g}_\ell.
\ee
For general composed functions $h=h(f_1, \cdots, f_n)$ with $h$ a smooth nonlinear function on $\mathbb{R}^n$, the non-commutation 
is measured by the quantity 
\be  \Delta_\ell h:= \overline{h(f_1, \cdots, f_n)}_\ell - h(\ol{(f_1)}_\ell, \cdots, \ol{(f_n)}_\ell). \ee 
To simplify the writing of various expressions, we shall  often use an ``under-bar" notation to indicate the function 
evaluated at coarse-grained fields: 
\be\label{underbarNot}
\ul{h}_\ell:=h(\ol{(f_1)}_\ell, \cdots, \ol{(f_n)}_\ell),
\ee
whereas $\ol{h}_\ell= \overline{h(f_1, \cdots, f_n)}_\ell.$ 

\begin{remark}
If, as in Remark \ref{rem:finiteTime} above, we consider space-time domains with a
finite time interval $\Gamma= \Omega\times (0,T)$, $T<\infty$ (or a semi-infinite interval $\Omega\times (0,\infty)$ for
mollifiers which are not causal), coarse-graining cumulants $\tau_\ell(f_1, \cdots, f_n)$ and smooth functions $\ul{h}_\ell$ 
of coarse-grained fields are not defined everywhere on $\Gamma$ for $\ell>0$. Instead, they are defined only for $(\bx,t)\in \Gamma$ 
such that $S_{\bx,t} \cG_\ell \in \mathcal{D}( \Omega\times (0,T)),$ e.g. when the distance of $(\bx,t)$ to $\partial\Gamma$ 
is less than $\ell.$ They are thus well-defined for every $(\bx,t)\in \Omega\times (0,T)$ at sufficiently small $\ell.$
\end{remark}


\section{Coarse-Grained Navier-Stokes and Balance Equations} \label{sec:CGNS}

We now discuss the results of coarse-graining the solutions of the compressible Navier-Stokes system. None of the results 
in this section depend upon the particular type of coarse-graining and are valid whether coarse-graining is in 
space, time, space-time or using some other averaging procedure (such as as weighted coarse-graining). 
We drop the superscript $\varepsilon$ in this section to simplify notations. 

The coarse-grained Navier-Stokes equations for mass density $\vr,$ momentum density $\bj=\vr\bu,$ 
and energy density $E$ are  
\begin{eqnarray} \label{eq:CGrho}
\partial_t \ol{\vr}_\ell &+& \nabla_x\cdot\ol{\nj}_\ell =0,\\
 \label{eq:CGj}
\partial_t \ol{\,\nj\,}_\ell &+& \nabla_x \cdot \left( \ol{(\bj  \bu)}_\ell +  \ol{p}_\ell \bI +\ol{\bT}_\ell \right)  = \bzed, \\
\label{eq:CGTE}
\partial_t \ol{E}_\ell &+& \nabla_x \cdot \left( \ol{((E+p)\bu)}_\ell +\ol{(\bT\cdot \bu)}_\ell + \ol{\bq}_\ell \right)  =0.
\end{eqnarray}
It is useful to rewrite the equations (\ref{eq:CGrho}) and (\ref{eq:CGj}) employing the \emph{Favre (density-weighted) averaging}: 
\be \tilde{f}_\ell = \ol{(\vr f)}_\ell/\ol{\vr}_\ell. \ee 
One may likewise define cumulants $\tilde{\tau}_\ell(f_i,\dots,f_n)$ with respect to this Favre filtering. 
See \cite{favre1969statistical,aluie2013scale}. With this new averaging, (\ref{eq:CGrho})--(\ref{eq:CGj}) may be rewritten: 
\begin{eqnarray} \label{eq:CGrho2}
\partial_t \ol{\vr}_\ell &+& \nabla_x\cdot\left( \ol{\vr}_\ell \tilde{\bu}_\ell \right) =0,\\
\label{eq:CGu}
\ol{\vr}_\ell( \partial_t  + \tilde{\bu}_\ell\cdot\nabla_x)\tilde{\bu}_\ell &+& \nabla_x\cdot\left( \ol{\vr}_\ell\tilde{\tau}_\ell (\bu, \bu) +  \ol{p}_\ell \bI +\ol{\bT}_\ell \right)  = 0.
\end{eqnarray}
We emphasize that our use of Favre coarse-graining is mathematically only a matter of convenience, in order to reduce 
the number of terms in our coarse-grained equations (and to provide them with simple physical interpretations \cite{aluie2013scale,eyinkdrivascompr}). 
Favre cumulants of $f_1,\dots,f_n$ may always be rewritten in terms of unweighted cumulants of $f_1,\dots,f_n$ and $\vr.$
For example \cite{aluie2013scale,eyinkRec2015}:
\begin{eqnarray} \label{favref}
&& \qquad \hspace{12pt} \tilde{f}_\ell  = \ol{f}_\ell + \frac{1}{\ol{\vr}_\ell} \ol{\tau}_\ell(\vr, f),\\
\label{favreTau}
&& \quad \tilde{\tau}_\ell(f,g)  = \ol{\tau}_\ell(f,g) + \frac{1}{\ol{\vr}_\ell}\ol{\tau}_\ell(\vr,f,g) - \frac{1}{\ol{\vr}_\ell^2}\ol{\tau}_\ell(\vr,f)\ol{\tau}_\ell(\vr,g) ,\\
\label{favreTau3}
&& \tilde{\tau}_\ell(f,g,h)  = \ol{\tau}_\ell(f,g,h) + \frac{1}{\ol{\vr}_\ell}\ol{\tau}_\ell(\vr,f,g,h) \\
\nonumber
&& \quad \ \ - \frac{1}{\ol{\vr}_\ell^2} 
[\ol{\tau}_\ell(\vr,f)\ol{\tau}_\ell(\vr,g,h) + \mbox{cyc. perm. $f,g,h$}] 
+ \frac{2}{\ol{\vr}_\ell^3}\ol{\tau}_\ell(\vr,f)\ol{\tau}_\ell(\vr,g)\ol{\tau}_\ell(\vr,h).
\end{eqnarray}
We next derive various balance equations for the coarse-grained fields. 
\vspace{2mm}


\noindent \emph{Resolved Kinetic Energy:} Following Aluie \cite{aluie2013scale}, we consider a resolved kinetic energy 
$ \frac{1}{2}\ol{\vr}_\ell |\tilde{\bu}|^2= |\ol{\nj}|_\ell^2/2\ol{\vr}_\ell$.   
Using (\ref{eq:CGrho2}) and (\ref{eq:CGu}) one can derive its balance equation:
\be
\partial_t \left( \frac{1}{2} \ol{\vr}_\ell |\tilde{\bu}_\ell |^2\right) + \nabla_x \cdot \bJ_\ell^v =    \ol{p}_\ell \ol{\Theta}_\ell -Q_\ell^{\rm flux}   -D_\ell^v ,\label{eq:CGKE}
\ee
where the various terms are defined by:
 \begin{eqnarray}  \label{eq:Juell}
\bJ_\ell^v &:=& \left(\frac{1}{2}\ol{\vr}_\ell|\tilde{\bu}_\ell|^2 
+\ol{p}_\ell\right)\tilde{\bu}_\ell+ \ol{\vr}_\ell\tilde{\bu}_\ell\cdot\tilde{ \tau}_\ell (\bu, \bu)  -\frac{\ol{p}_\ell }{\ol{\vr}_\ell }\ol{ \tau}_\ell(\vr, \bu)  + \tilde{\bu}_\ell\cdot \ol{\bT}_\ell ,\\
 \label{eq:Quell}
Q_\ell^{\rm flux} &:=& \frac{\nabla_x\ol{p}_\ell }{\ol{\vr}_\ell } \cdot \ol{ \tau}_\ell(\vr, \bu)- \ol{\vr}_\ell \nabla_x \tilde{\bu}_\ell :\tilde{ \tau}_\ell(\bu,\bu)   ,\\
D_\ell^v &:=& -\nabla_x\tilde{\bu}_\ell :\ol{\bT}_\ell    .\label{eq:Vuell}
 \end{eqnarray}
Equation (\ref{eq:CGKE}) may be rewritten as 
\be
\partial_t \left( \frac{1}{2} \ol{\vr}_\ell |\tilde{\bu}_\ell |^2\right) + \nabla_x \cdot \bJ_\ell^v =    \ol{(p\Theta)}_\ell -Q_\ell^{\rm inert}   -D_\ell^v, \label{eq:CGKE2}
\ee
where the ``inertial dissipation'' is defined by 
\be Q_\ell^{\rm inert} :=Q_\ell^{\rm flux} +\ol{\tau}_\ell(p,\Theta). \ee


\noindent \emph{Unresolved Kinetic Energy}. 
We define this quantity (with summation over repeated $i$ indices) as 
\be\label{subscaleKE}
k_\ell:=\frac{1}{2} \ol{\vr}_\ell \tilde{\tau}_\ell(v_i,v_i). 
\ee
Note that 
$\frac{1}{2} \ol{\vr}_\ell |\tilde{\bu}_\ell |^2+ k_\ell=\frac{1}{2}\ol{\left(\vr |\bu|^2\right)}_\ell,$
whose integral over $\Omega$ is a time-mollification of the total kinetic energy. 
Taking the difference of the coarse-grained kinetic-energy Eq. (\ref{eq:KE}) governing $\frac{1}{2}\ol{\left(\vr |\bu|^2\right)}_\ell$ and Eq. (\ref{eq:CGKE}) for $\frac{1}{2} \ol{\vr}_\ell |\tilde{\bu}_\ell |^2$,  one obtains:
 \be
\partial_t k_\ell+ \nabla \cdot \bJ_\ell^k=  (\ol{\tau}_\ell(p,\Theta)-\ol{Q}_\ell)+Q_\ell^{\rm flux} 
+D_\ell^k, \label{eq:taupD}  
 \ee
 where 
 \begin{eqnarray}
 \label{Jkell} 
 \bJ_\ell^k :&=& \frac{1}{2} \ol{\vr}_\ell \tilde{\tau}_\ell(v_i,v_i)\tilde{\bu}_\ell + \ol{\tau}_\ell (p,\bu)+ \frac{1}{2} \ol{\vr}_\ell \tilde{\tau}_\ell(v_i,v_i,\bu) \\
 && \hspace{80pt} + \ol{(\bT\cdot\bu)}_\ell-\ol{\bT}_\ell\cdot \tilde{\bu}_\ell , \nonumber \\
 \label{Dkell}
 D_\ell^k :&=& -\ol{\bT}_\ell:\nabla_x\tilde{\bu}_\ell.
 \end{eqnarray}
 

\noindent \emph{Resolved Internal Energy:} Directly coarse-graining equation (\ref{eq:IE}), one finds the following 
balance equation for the resolved internal energy: 
\be 
\partial_t \ol{u}_\ell + \nabla_x \cdot \bJ_\ell^u = \ol{Q}_\ell- \ol{(p\Theta)}_\ell ,
\label{eq:CGIE}
\ee 
where 
 \be
 \label{eq:Jeell}
\bJ_\ell^u = \ol{(u\bu)}_\ell+\ol{\bq}_\ell  = \ol{u}_\ell\ol{\bu}_\ell+\ol{ \tau}_\ell (u,\bu)+\ol{\bq}_\ell .
\ee
A more important quantity for our analysis is $\ol{u}_\ell^*:= \ol{u}_\ell+ k_\ell,$ which we term the ``intrinsic resolved internal energy''.
It is defined more fundamentally by the  implicit relation 
\be \ol{E}_\ell = \frac{1}{2} \ol{\vr}_\ell|\tilde{\bu}_\ell |^2+ \ol{u}_\ell^*, \label{LSu*} \ee 
in terms of the resolved quantities $\ol{\vr}_\ell,$ $\tilde{\bu}_\ell$, and $\ol{E}_\ell$.
One thus derives a balance equation for this intrinsic internal energy by subtracting the resolved kinetic energy 
balance (\ref{eq:CGKE}) from the coarse-grained total energy equation (\ref{eq:CGTE}): 
\be 
\partial_t \ol{u}_\ell^* + \nabla_x \cdot \bJ_\ell^{u*} =   Q_\ell^{\rm flux} -\ol{p}_\ell \ol{\Theta}_\ell+  D_\ell^k,
\label{eq:CGIE*}
\ee 
where $ D_\ell^k$ is defined by equation (\ref{Dkell}) and 
 \begin{eqnarray}  \nonumber
\bJ_\ell^{u*} &=& \ol{u}_\ell\ol{\bu}_\ell+\ol{ \tau}_\ell (h,\bu)  + \frac{1}{2} \ol{\vr}_\ell \tilde{\tau}_\ell(v_i,v_i)\tilde{\bu}_\ell + \frac{1}{2} \ol{\vr}_\ell \tilde{\tau}_\ell(v_i,v_i,\bu) \\
& &\    +\ \ol{\bq}_\ell+ \ol{(\bT\cdot\bu)}_\ell-\ol{\bT}_\ell\cdot \tilde{\bu}_\ell,
 \end{eqnarray} \label{eq:Jeell*}
with $h:=u+p$ defining the standard thermodynamic enthalpy.  
 \\

\vspace{-2mm}
\noindent \emph{Resolved Entropy}: 
We derive an equation for $\ul{s}_\ell:=s(\ol{u}_\ell,\ol{\vr}_\ell)$ using (\ref{eq:CGIE}), also (\ref{eq:CGrho}) rewritten as
\be \partial_t \ol{\vr}_\ell +\nabla_x\cdot(\ol{\vr}_\ell\ol{\bu}_\ell+\ol{\tau}_\ell(\vr,\bu)) =0,\ee 
the homogeneous Gibbs relation $\ul{T}_\ell \ul{s}_\ell = (\ol{u}_\ell + \ul{p}_\ell)- \ul{\mu}_\ell \ol{\vr}_\ell$, and the first law of thermodynamics: 
\be\label{eq:CGGibbs}
 \ul{T}_\ell\ol{\mathcal{D}}_t \ul{s}_\ell= \ol{\mathcal{D}}_t\ol{u}_\ell - \ul{\mu}_\ell \ol{\mathcal{D}}_t \ol{\vr}_\ell,
\ee
with $\ol{\mathcal{D}}_t =\partial_t + \ol{\bu}_\ell\cdot \nabla$ being the material derivative along the smoothed flow. One then finds that the resolved entropy satisfies:
\be
\partial_t \ul{s}_\ell+ \nabla_x \cdot \bJ_\ell^s =   \frac{\ol{Q}_\ell-\ol{\tau}_\ell(p,\Theta)}{\ul{T}_\ell}  -  I_\ell^{\rm flux}+  \Sigma_\ell^{\rm flux}+ D_\ell^s, \label{eq:LSs}
\ee
where
 \begin{eqnarray}  \label{eq:Jsell}
\bJ_\ell^s &:=& \ul{s}_\ell \ol{\bu}_\ell+ \ul{\beta}_\ell\left( {\ol{\tau}_\ell (u,\bu)} + {\ol{\bq}_\ell}\right) -\ul{\lambda}_\ell\ol{ \tau}_\ell(\vr, \bu),\\
 \label{eq:Isell}
I_\ell^{\rm flux}&:=& \ul{\beta}_\ell (\ol{p}_\ell -\ul{p}_\ell) \ol{\Theta}_\ell,\\
 \label{eq:Lambell}
 \Sigma_\ell^{\rm flux}&:=&  \nabla_x \ul{\beta}_\ell \cdot\ol{\tau}_\ell (u,\bu) - \nabla_x \ul{\lambda}_\ell \cdot\ol{ \tau}_\ell(\vr,\bu),\\
D_\ell^s &:=& -\frac{ \ol{\bq}_\ell\cdot   \nabla_x\ul{T}_\ell}{\ul{T}_\ell^2},\label{eq:Vsell}
 \end{eqnarray}
with $\beta := 1/T$ and $\lambda := \mu/T$. Considering the source terms on the righthand side of (\ref{eq:LSs}), we shall see that all of the 
terms marked ``flux'' satisfy simple bounds, and the direct dissipation term $D_\ell^s$ will be seen to vanish as 
$\varepsilon\rightarrow 0,$ but the quantity $\ol{Q}_\ell-\ol{\tau}_\ell(p,\Theta)$, 
which originates from the $\ol{\mathcal{D}}_t\ol{u}_\ell$ term in (\ref{eq:CGGibbs}), is more difficult to estimate. 
Fortunately, the same term appears in the balance equation for ``unresolved kinetic energy.''
\vspace{2mm}


\noindent \emph{Intrinsic Resolved Entropy}: In order to cancel the difficult term $\ol{Q}_\ell-\ol{\tau}_\ell(p,\Theta)$,  we introduce 
an ``intrinsic resolved entropy density'' by $\ul{s}_\ell^*:=s(\ol{u}_\ell,\ol{\vr}_\ell)+ \ul{\beta}_\ell k_\ell.$  
This quantity is defined more fundamentally by 
\be \ul{s}_\ell^* = \ul{\beta}_\ell(\ol{u}_\ell^*+\ul{p}_\ell)-\ul{\lambda}_\ell\ol{\vr}_\ell, \label{LSs*} \ee 
where $\ol{u}_\ell^*$ is the intrinsic resolved internal energy 
defined in (\ref{LSu*}). The two definitions are seen to be the same using 
the homogenous Gibbs relation (\ref{eq:homogenousGibbs}), or  $\ul{s}_\ell = \ul{\beta}_\ell(\ol{u}_\ell+\ul{p}_\ell)-\ul{\lambda}_\ell\ol{\vr}_\ell$.
By means of (\ref{LSs*}) and (\ref{eq:CGIE*}), together with the standard thermodynamic relation 
$\ol{\mathcal{D}}_t (\ul{\beta}_\ell\ul{p}_\ell)= \ol{\vr}_\ell \ol{\mathcal{D}}_t\ul{\lambda}_\ell - \ol{u}_\ell \ol{\mathcal{D}}_t \ul{\beta}_\ell,$ 
one obtains
\be\label{eq:thermoRel}
\ol{\mathcal{D}}_t \ul{s}_\ell^*= (\ol{\mathcal{D}}_t\ul{\beta}_\ell) k_\ell + \ul{\beta}_\ell \ol{\mathcal{D}}_t\ol{u}_\ell^* - \ul{\lambda}_\ell \ol{\mathcal{D}}_t \ol{\vr}_\ell.
\ee
rather than (\ref{eq:CGGibbs}). Note that $\ol{\mathcal{D}}_t\ol{u}_\ell^*$ appears here rather than $\ol{\mathcal{D}}_t\ol{u}_\ell$.  It is 
straightforward using (\ref{eq:thermoRel}) to derive the balance equation for $\ul{s}_\ell^*$: 
\be
\partial_t \ul{s}_\ell^*+ \nabla_x \cdot \bJ_\ell^{s*} =   
 - I_\ell^{\rm flux} +  \Sigma_\ell^{\rm flux *}+ D_\ell^s +\ul{\beta}_\ell D_\ell^k \label{eq:LSs*}
\ee
with 
 \begin{eqnarray}  \label{eq:Jsell*}
\bJ_\ell^{s*} &:=& \bJ_\ell^s + \ul{\beta}_\ell \bJ_\ell^k,\\
  \label{eq:Lambell*}
 \Sigma_\ell^{\rm flux *}&:=&  \Sigma_\ell^{\rm flux} +  \ul{\beta}_\ell Q_\ell^{\rm flux} + \partial_t\ul{\beta}_\ell\ k_\ell + \nabla_x\ul{\beta}_\ell \cdot \bJ_\ell^k  .
 \end{eqnarray}
We also then write 
\be \Sigma_\ell^{\rm inert *} = - I_\ell^{\rm flux} +  \Sigma_\ell^{\rm flux *} \label{eq:SigInert}\ee
for the net ``inertial'' production of the intrinsic entropy. 
The balance equation (\ref{eq:LSs*})  of the intrinsic entropy turns out to be the key identity for the proof of Theorem \ref{Onsager2}. 
On the righthand side, the direct dissipation terms will be shown to vanish as $\varepsilon\rightarrow 0$ and the remaining 
terms are ``flux-like'' and depend only upon increments of the basic variables $u,$ $\vr,$ $\bu.$ This latter result follows 
from commutator estimates of Section \ref{sec:proofs}. 

\begin{remark}\label{RemarkIntrinsic}
{Note that the balance equations (\ref{eq:CGKE}) for resolved kinetic energy, (\ref{eq:CGIE*}) for intrinsic resolved internal energy 
and (\ref{eq:LSs*}) for intrinsic resolved entropy are valid for general weak Euler solutions after setting 
${\bf T}={\bf q}={\bf 0},$
without the need for considering the viscous regularization with $\ve>0$ and taking $\ve\to 0.$ On the other hand, the balance equations 
(\ref{eq:taupD}) for unresolved kinetic energy, (\ref{eq:CGIE}) for resolved internal energy, and (\ref{eq:LSs}) for resolved 
entropy are valid with ${\bf T}={\bf q}={\bf 0}$
only for weak Euler solutions obtained from the inviscid limit. In fact, the latter equations 
contain the quantities $\ol{Q}_\ell$ and $\ol{\tau}_\ell(p,\Theta)$ which are {\it a priori} undefined for general weak Euler solutions.} 
\end{remark}  


\section{Commutator Estimates}\label{sec:proofs}

The estimates that we derive in this section are valid for coarse-graining in space, time, or space-time. We state 
them here for the space-time coarse-graining that we use in our proofs of 
Theorems \ref{Onsager1E}--\ref{Onsager2}.
The need for coarse-graining in time as well as in space is due to the time-derivative term in expression (\ref{eq:Lambell*}) 
for $\Sigma_\ell^{\rm flux *}.$ In order to present the estimates, it is useful to employ a ``space-time vector'' notation, with 
$X=(\bx,c t),$ $R=(\br,c\tau)$ where $c$ is a constant with dimensions of velocity which is fixed independent of 
$\epsilon$ and $\ell.$ For example, we may take $c$ to be the speed of sound (or, in the relativistic case, the speed 
of light). We correspondingly take the $(d+1)$-dimensional domain $\Gamma=\Omega \times (0,T)$
and consider coarse-graining of functions $f_i\in L^\infty(\Gamma),$ $i=1,2,3,\dots$ with a non-negative, standard mollifier 
$\cG\in C^\infty(\Gamma)$ which can, but need not, be causal.  We assume, for convenience, that ${\rm supp}(\cG)$ 
is contained in the Euclidean unit ball.  Recall that since $L^\infty(\Gamma)\subset L_{loc}^p(\Gamma)$ for $p\geq 1$,  
the functions $f_i$ are locally $p$--integrable, $f_i\in L_{loc}^p(\Gamma)$.  For any open 
$O\subset\subset \Gamma,$ let $\|\cdot\|_{p,O}$ represent the standard $L_p(O)$-norm on the restriction $f_i\big|_O$ . 
All estimates assume $\ell$ sufficiently small for fixed $O\subset\subset \Gamma$, in particular $\ell<\ell_O= {\rm dist}(O,\partial\Gamma)$.   

A basic result is the following:

  \begin{lemma}  \label{tauprop}
For $n>1$, the coarse-graining cumulants are related to cumulants of the difference fields $\delta f(R;X):= f(X+R)- f(X)$ as follows:
\be \tau_\ell(f_1, \dots, f_n) = \langle \delta f_1, \dots, \delta f_n\rangle_\ell^c , \label{cum-avrg} \ee
where $\langle \cdot \rangle_\ell$ denotes average over the displacement vector $R$ with density $\cG_\ell(R)$ and the superscript $c$ 
indicates the cumulant with respect to this average.  
\end{lemma}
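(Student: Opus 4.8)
The plan is to prove Lemma~\ref{tauprop} by a direct generating-function computation, exploiting the key algebraic fact that coarse-graining acts on products essentially like a probabilistic expectation: if $\langle\cdot\rangle_\ell$ denotes averaging against the probability density $\cG_\ell(R)$ in the displacement variable $R$, then $\overline{(f_1\cdots f_n)}_\ell(X) = \langle f_1(X+R)\cdots f_n(X+R)\rangle_\ell$. Since each $f_i(X+R) = f_i(X) + \delta f_i(R;X)$ and the $f_i(X)$ are constants with respect to the $R$-average, this is an affine change of the random variables. First I would recall that cumulants are defined via the logarithm of a (formal) moment generating function: for the average $\langle\cdot\rangle_\ell$ one sets
\be
K_\ell(\lambda_1,\dots,\lambda_n;X) := \log\left\langle \exp\left(\sum_{j=1}^n \lambda_j\, f_j(X+R)\right)\right\rangle_\ell,
\ee
and then $\tau_\ell(f_{i_1},\dots,f_{i_k}) = \partial_{\lambda_{i_1}}\cdots\partial_{\lambda_{i_k}} K_\ell\big|_{\lambda=0}$, with the convention that the first cumulant $\tau_\ell(f) = \overline{f}_\ell$ is the mean. (One should check this generating-function characterization reproduces the iterative partition definition given in the text; that is a standard identity, e.g. \cite{stuart2009kendall}.)

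The main step is then the translation-covariance of the generating function. Writing $f_j(X+R) = f_j(X) + \delta f_j(R;X)$ inside the exponential and pulling out the $R$-independent factor gives
\be
K_\ell(\lambda;X) = \sum_{j=1}^n \lambda_j f_j(X) + \log\left\langle \exp\left(\sum_{j=1}^n \lambda_j\, \delta f_j(R;X)\right)\right\rangle_\ell
= \sum_{j=1}^n \lambda_j f_j(X) + K_\ell^{\delta}(\lambda;X),
\ee
where $K_\ell^{\delta}$ is the cumulant generating function of the difference fields $\delta f_j$ under the same average. The extra linear term $\sum_j \lambda_j f_j(X)$ affects only the first derivatives, i.e.\ only the first cumulant (the mean), and is annihilated by any mixed derivative of order $\geq 2$. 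Hence for every $k = n > 1$,
\be
\tau_\ell(f_1,\dots,f_n) = \partial_{\lambda_1}\cdots\partial_{\lambda_n}K_\ell\big|_{0} = \partial_{\lambda_1}\cdots\partial_{\lambda_n}K_\ell^{\delta}\big|_{0} = \langle \delta f_1,\dots,\delta f_n\rangle_\ell^c,
\ee
which is exactly \eqref{cum-avrg}. The restriction $n>1$ is precisely what makes the linear shift term irrelevant; for $n=1$ one would pick up the unwanted $f(X)$ contribution, consistent with $\tau_\ell(f) = \overline{f}_\ell \neq \overline{\delta f}_\ell$ in general.

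The one genuine technical point — and the place I expect to spend the most care rather than the most difficulty — is justifying that these formal generating-function manipulations are legitimate for the functions at hand. Since all $f_i \in L^\infty(\Gamma)$ and $\cG_\ell$ is a compactly supported probability density, every moment $\langle \prod_j f_{i_j}(X+R)\rangle_\ell$ is finite and bounded pointwise by $\prod_j \|f_{i_j}\|_{\infty}$, so the generating function is real-analytic in $\lambda$ near the origin and all derivative interchanges are valid; for the same reason $\delta f_j$ are bounded and the shifted generating function is equally well-behaved. Alternatively — and this is the cleaner route to write up — one can bypass generating functions entirely and argue combinatorially: expand each $f_j(X+R) = f_j(X) + \delta f_j(R;X)$, substitute into the iterative partition definition of $\tau_\ell$, and observe by induction on $n$ that all terms involving at least one undifferenced factor $f_j(X)$ cancel among the various partitions, leaving only the cumulant of the $\delta f_j$'s. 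I would present the generating-function proof as the main argument since it is transparent and makes the role of $n>1$ manifest, and remark that the combinatorial cancellation is the same identity in disguise.
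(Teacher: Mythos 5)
Your proposal is correct and rests on exactly the same idea as the paper's (cited) proof: identify coarse-graining with averaging over the displacement $R$ with density $\cG_\ell$, write $f_i(X+R)=f_i(X)+\delta f_i(R;X)$, and use the invariance of cumulants of order $n>1$ under shifts of the ``random variables'' by the ``non-random'' constants $f_i(X)$. Your generating-function computation (with the boundedness of the $f_i$ and compact support of $\cG_\ell$ justifying the manipulations) is simply a standard, fully written-out way of establishing that shift invariance, so this matches the paper's argument rather than offering a different route.
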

This result is proved in \cite{CET1994} for $n=2$ and, in the more general form quoted here, in  \cite{eyinknotes} or \cite{eyinkRec2015}, 
Appendix B. The proof is an easy application of the invariance of cumulants of ``random variables'' to shifts of those variables by ``non-random'' constants.   A direct consequence of Lemma \ref{tauprop} is:
\begin{proposition} 
\emph{(cumulant estimates)} 
\label{bndTau}   For open $O\subset\subset \Gamma,$ $p\in [1,\infty]$ and $n>1$
\be\label{tauIncBnd}
\|\tau_\ell(f_1, \dots, f_n)\|_{p,O} =\cO\left(\prod_{i=1}^n \|\delta f_i(\ell)\|_{p_i,O}\right)  \ \ \ {\rm  with } \ \ \ \frac{1}{p}=\sum_{i=1}^n \frac{1}{p_i} ,
\ee
where $ \|\delta f(\ell)\|_{p,O} :=  \sup_{|R|<\ell}\|\delta f(R)\|_{p,O}$.  Assuming $f_i \in B_{p_i,loc}^{\sigma_i,\infty}(\Gamma)$ with $0<\sigma_i\leq 1$ 
for $i=1, \dots, n$:
\be
\|\tau_\ell(f_1, \dots, f_n)\|_{p,O} =\cO\left(\ell^{\sum_{i=1}^n \sigma_i}\right), 
\ee
If only $f_i \in L^\infty(\Gamma),$ then at least 
\be \lim_{\ell\rightarrow 0} \|\tau_\ell(f_1, \dots, f_n)\|_{p,O} =0, \quad 1\leq p<\infty, \label{density-est}  \ee
but without an estimate of the rate. 
\end{proposition}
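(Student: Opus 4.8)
\textbf{Proof proposal for Proposition \ref{bndTau}.}

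The plan is to deduce everything from Lemma \ref{tauprop}, which expresses $\tau_\ell(f_1,\dots,f_n)$ as the cumulant $\langle \delta f_1,\dots,\delta f_n\rangle_\ell^c$ of the increment fields under the averaging with density $\cG_\ell$. First I would recall that a cumulant of order $n$ is a finite linear combination of products of moments: schematically $\langle \delta f_1,\dots,\delta f_n\rangle_\ell^c = \sum_\Pi c_\Pi \prod_p \langle \prod_{i\in B_p} \delta f_i\rangle_\ell$ over partitions $\Pi=\{B_p\}$ of $\{1,\dots,n\}$. Hence it suffices to bound each moment $\langle \prod_{i\in B} \delta f_i\rangle_\ell(X) = \int \cG_\ell(R)\prod_{i\in B}\delta f_i(R;X)\,dR$ pointwise in $X$, and then assemble. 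For a single block $B$, writing $q_i$ for exponents with $\sum_{i\in B} 1/q_i = 1/q_B$ and applying H\"older in the variable $R$ against the probability density $\cG_\ell(R)\,dR$ (supported in $|R|<\ell$), one gets $|\langle \prod_{i\in B}\delta f_i\rangle_\ell(X)| \le \prod_{i\in B}\big(\int \cG_\ell(R)|\delta f_i(R;X)|^{q_i}dR\big)^{1/q_i}$. Taking the $L^{q_B}(O)$-norm in $X$ and using Minkowski's integral inequality (to pull the $L^{q_B}_X$-norm inside the $R$-integral) bounds this by $\prod_{i\in B}\big(\int_{|R|<\ell}\cG_\ell(R)\,\|\delta f_i(R)\|_{q_i,O}^{q_i}\,dR\big)^{1/q_i} \le \prod_{i\in B}\|\delta f_i(\ell)\|_{q_i,O}$, where $\|\delta f_i(\ell)\|_{q_i,O}=\sup_{|R|<\ell}\|\delta f_i(R)\|_{q_i,O}$. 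One must be mildly careful that $\ell<\ell_O$ so all translates $X+R$ lie in $\Gamma$ and the restricted norms are finite; this is exactly the standing hypothesis. Multiplying the block estimates across a partition and choosing, for each block, the exponents $q_i$ consistently with a global assignment $\{p_i\}$ satisfying $\sum_{i=1}^n 1/p_i = 1/p$ (any block $B$ then has $1/q_B=\sum_{i\in B}1/p_i$, and the product of the $1/q_B$-conjugate factors telescopes correctly), one obtains $\|\prod_p(\text{moment}_p)\|_{p,O} = \cO\big(\prod_{i=1}^n\|\delta f_i(\ell)\|_{p_i,O}\big)$ via one more application of generalized H\"older over the blocks. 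Summing over the finitely many partitions $\Pi$ gives \eqref{tauIncBnd}.

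The Besov estimate is then immediate: if $f_i\in B^{\sigma_i,\infty}_{p_i,loc}(\Gamma)$ with $0<\sigma_i\le 1$, then by the definition \eqref{besov-def} one has $\|\delta f_i(R)\|_{p_i,O}\le \|f_i\|_{B^{\sigma_i,\infty}_{p_i}(O')}|R|^{\sigma_i}$ for a slightly larger $O'$ with $O\subset\subset O'\subset\subset\Gamma$, hence $\|\delta f_i(\ell)\|_{p_i,O}=\cO(\ell^{\sigma_i})$, and plugging into \eqref{tauIncBnd} yields $\|\tau_\ell(f_1,\dots,f_n)\|_{p,O}=\cO(\ell^{\sum_i\sigma_i})$.

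The only genuinely non-routine point is the last assertion \eqref{density-est}: when we know merely $f_i\in L^\infty(\Gamma)$ we still get $\|\delta f_i(\ell)\|_{p,O}\to 0$ as $\ell\to 0$ for every finite $p$, but with no rate. I would obtain this from continuity of translation in $L^p$: since $O\subset\subset\Gamma$ and each $f_i\in L^\infty(\Gamma)\subset L^p_{loc}(\Gamma)$, standard density of $C_c$ in $L^p$ gives $\|f_i(\cdot+R)-f_i\|_{p,O}\to 0$ as $|R|\to 0$, i.e. $\|\delta f_i(\ell)\|_{p,O}\to 0$; applying \eqref{tauIncBnd} with, say, all $p_i=np$ (so $\sum 1/p_i=1/p$) — note $f_i\in L^\infty$ makes each $\|\delta f_i(\ell)\|_{np,O}$ finite and still $o(1)$ — forces $\|\tau_\ell(f_1,\dots,f_n)\|_{p,O}\to 0$. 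The potential pitfall here, and the step to handle with care, is the uniformity hidden in $\|\delta f_i(\ell)\|_{p,O}=\sup_{|R|<\ell}\|\delta f_i(R)\|_{p,O}$: one needs that $\|\delta f_i(R)\|_{p,O}$ is small uniformly over all $|R|<\ell$, not just along a sequence $R\to 0$. This follows because $R\mapsto f_i(\cdot+R)$ is (uniformly) continuous from the compact ball $\{|R|\le\ell_O/2\}$ into $L^p(O)$, so its modulus of continuity at $0$ controls the whole supremum; alternatively one invokes that the map is uniformly continuous on the closed ball. Everything else is bookkeeping over the finite partition sum and the H\"older exponent split.
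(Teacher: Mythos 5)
Your proposal is correct and follows essentially the same route as the paper: the bound (\ref{tauIncBnd}) is obtained directly from Lemma \ref{tauprop} by expanding the cumulant into moments of increments and applying H\"older/Minkowski (the paper leaves this bookkeeping implicit), the Besov estimate is immediate, and the vanishing statement (\ref{density-est}) comes from strong continuity of translations in $L^p(O)$, exactly as in the paper's one-line justification.
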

Here ``big-$\cO$'' notation, as usual, means inequality up to a constant independent of $\ell$, which in this case depends on the details of the mollifier $\cG$. The final statement is a consequence of the bound (\ref{tauIncBnd}) and the strong continuity of 
the shift operators $(S_{-\br} f)(\bx)=f(\bx+\br)$ in the $L^p(O)$-norm, a standard fact which follows from a simple density argument.  

We also need bounds on space-time derivatives of the cumulants.  This can be accomplished using the fact that all derivatives of cumulants 
with respect to  $X$ can be transferred to space-derivatives of the filter kernels $\cG_\ell(R)$ with respect to $R$. 
This is another consequence of the invariance of cumulants to constant shifts; see \cite{eyinknotes} or 
\cite{eyinkRec2015}. For example, with 
\begin{eqnarray}
\frac{\partial}{\partial X_k} \ol{\tau}_\ell (f_i) &=& \frac{\partial \ol{(f_i)}_\ell}{\partial X_k} =-\frac{1}{\ell} \int \rmd^{d+1} R \ \left(\frac{\partial\cG}{\partial R_k}\right)_\ell(R) \delta f_i(R), 
\label{CGgrad}\\ \nonumber
\frac{\partial}{\partial X_k} \ol{\tau}_\ell (f_i,f_j) &=& -\frac{1}{\ell} \left\{ \int \rmd^{d+1} R \ \left(\frac{\partial\cG}{\partial R_k}\right)_\ell(R)  \delta f_i(R) \delta f_j(R)\right.\\ \nonumber
&&  \! \!\!  -\int \rmd^{d+1} R \ \left(\frac{\partial\cG}{\partial R_k}\right)_\ell(R)  \delta f_i(R) \int \rmd R'\cG_\ell(r') \delta f_j(R')\\
 &&\! \!\!   -\left. \int \rmd^{d+1} R \  \cG_\ell(R) \delta f_i(R) \int \rmd R' \left(\frac{\partial\cG}{\partial R_k'}\right)_\ell(R')  \delta f_j(R') \right\}\!, \label{eq:dtau}
\end{eqnarray}
and so forth. Using expressions of this type, one obtains bounds of the form:
\begin{proposition} 
\emph{(cumulant-derivative  estimates)}
\label{bndTauder} For open $O\subset\subset \Gamma,$ $n\geq 1$ and $\partial_k=\partial/\partial X_k$
\be
\|\partial_{k_1}\cdots \partial_{k_m}\tau_\ell(f_1, \dots, f_n)\|_{p,O} 
=\cO\left(\ell^{-m}\prod_{i=1}^n \|\delta f_i(\ell)\|_{p_i,O}\right)  \ \ \ {\rm  with } \ \ \ \frac{1}{p}=\sum_{i=1}^n \frac{1}{p_i} .
\ee
Assuming $f_i \in B_{p_i,loc}^{\sigma_i,\infty}(\Gamma)$ with $0<\sigma_i\leq 1$ for $i=1, \dots, n$:
\be
\|\partial_{k_1}\cdots \partial_{k_m}\tau_\ell(f_1, \dots, f_n)\|_{p,O} =\cO\left(\ell^{-m+\sum_{i=1}^n \sigma_i}\right) .
\ee
\end{proposition}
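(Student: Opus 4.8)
The plan is to reduce the proposition to Lemma~\ref{tauprop} together with the elementary fact that $X$-derivatives of a coarse-graining cumulant can always be transferred onto the mollifier, exactly as illustrated for $n=1,2$ in formulas (\ref{CGgrad})--(\ref{eq:dtau}). Concretely, I would first establish the structural identity
\be
\partial_{k_1}\cdots\partial_{k_m}\tau_\ell(f_1,\dots,f_n)=\ell^{-m}\sum_{j}c_j\prod_{b=1}^{r_j}\int \rmd^{d+1}R\;\bigl(\partial^{\beta_b^{j}}\cG\bigr)_\ell(R)\prod_{i\in B_b^{j}}\delta f_i(R;X),
\ee
where the sum is finite with the number of terms bounded in terms of $m,n$ only, the $c_j$ are combinatorial constants, and for each $j$ the sets $\{B_1^{j},\dots,B_{r_j}^{j}\}$ form a partition of $\{1,\dots,n\}$ while the multi-indices $\beta_b^{j}$ satisfy $\sum_b|\beta_b^{j}|=m$; here $(\partial^\beta\cG)_\ell(R)=\ell^{-(d+1)}(\partial^\beta\cG)(R/\ell)$ is the dilation of a partial derivative of $\cG$. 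This identity rests on two facts. First, the $X$- and $R$-dependence of each translate $f_i(X+R)$ is only through $X+R$, so every $\partial/\partial X_k$ may be converted to $\partial/\partial R_k$ and integrated by parts onto $\cG_\ell$, producing one factor $\ell^{-1}$ and the replacement $\cG_\ell\mapsto(\partial_k\cG)_\ell$; iterating $m$ times and using the Leibniz rule distributes the $m$ derivatives among the partition blocks. Second, the invariance of cumulants under constant (in $R$) shifts of their arguments forces the un-differenced contributions $f_i(X)$ to cancel, so that a \emph{difference} field $\delta f_i(R;X)=f_i(X+R)-f_i(X)$ survives in every block (for a block carrying a derivative-kernel this is also automatic from $\int(\partial^\beta\cG)_\ell=0$). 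The general combinatorics is the one carried out in \cite{eyinknotes,eyinkRec2015}, so for the proposal I would state this identity and point to those references and to the displayed low-order cases.

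Granting the identity, both bounds follow by the argument already used for Proposition~\ref{bndTau}. Fix an open $O\subset\subset\Gamma$ and $\ell<\ell_O=\mathrm{dist}(O,\partial\Gamma)$, so that each $\delta f_i(R;\cdot)$ with $|R|<\ell$ is well-defined on $O$. For a single block factor $H_b(X)=\int(\partial^{\beta_b}\cG)_\ell(R)\prod_{i\in B_b}\delta f_i(R;X)\,\rmd^{d+1}R$, Minkowski's integral inequality for the $L^{p_b}(O)$-norm (with $1/p_b=\sum_{i\in B_b}1/p_i$) followed by H\"older's inequality on the inner product gives
\be
\|H_b\|_{p_b,O}\le\int|(\partial^{\beta_b}\cG)_\ell(R)|\prod_{i\in B_b}\|\delta f_i(R)\|_{p_i,O}\,\rmd^{d+1}R\le\|\partial^{\beta_b}\cG\|_{L^1}\prod_{i\in B_b}\|\delta f_i(\ell)\|_{p_i,O},
\ee
using that the dilated kernel has mass $\|(\partial^{\beta_b}\cG)_\ell\|_{L^1}=\|\partial^{\beta_b}\cG\|_{L^1}=\cO(1)$ (the factor $\ell^{-|\beta_b|}$ having already been extracted) and the definition $\|\delta f(\ell)\|_{p,O}=\sup_{|R|<\ell}\|\delta f(R)\|_{p,O}$. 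Multiplying over the blocks of a partition, applying H\"older once more (the exponents $p_b$ satisfying $\sum_b 1/p_b=\sum_{i=1}^n 1/p_i=1/p$), and summing the finitely many terms yields $\|\partial_{k_1}\cdots\partial_{k_m}\tau_\ell(f_1,\dots,f_n)\|_{p,O}=\cO(\ell^{-m}\prod_{i=1}^n\|\delta f_i(\ell)\|_{p_i,O})$. For the Besov case, when $f_i\in B_{p_i,loc}^{\sigma_i,\infty}(\Gamma)$ with $0<\sigma_i\le1$, the definition (\ref{besov-def}) gives $\|\delta f_i(\ell)\|_{p_i,O}\le\ell^{\sigma_i}\|f_i\|_{B^{\sigma_i,\infty}_{p_i}(O)}=\cO(\ell^{\sigma_i})$ for $\ell<\ell_O$, and substituting produces the rate $\cO(\ell^{-m+\sum_i\sigma_i})$. (For $n=1$ one needs $m\ge1$; then $\int(\partial^\beta\cG)_\ell=0$ is precisely what lets the single difference $\delta f_1$ be inserted, and the same estimate applies.)

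The main obstacle is the structural identity, and more precisely the assertion that after all the integrations by parts the terms containing bare values $f_i(X)$ cancel, leaving a difference in every block. This is what makes each factor carry the smallness $\|\delta f_i(\ell)\|_{p_i,O}\to0$ rather than merely $\|f_i\|_\infty$, without which the $\ell^{-m}$ prefactor would be fatal. The statement is combinatorial rather than analytic: it is transparent for $n\le2$ from (\ref{CGgrad})--(\ref{eq:dtau}) and propagates by the same mechanism in general, but writing it cleanly for arbitrary $m$ and $n$ requires setting up the cumulant/partition bookkeeping with some care --- most efficiently via the cumulant generating function $\log\int\cG_\ell(R)\exp\bigl(\sum_i\lambda_i f_i(X+R)\bigr)\,\rmd^{d+1}R$, whose $X$-derivatives feed $\ell^{-1}$-weighted derivative kernels $(\partial_k\cG)_\ell$ into the $\lambda$-expansion, while the normalizations $\int\cG_\ell=1$ and $\int(\partial^\beta\cG)_\ell=0$ force the surviving terms into the displayed shape. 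Everything after the identity --- the $\ell$-bookkeeping and the H\"older/Minkowski estimates --- is routine and identical in spirit to the proof of Proposition~\ref{bndTau}.
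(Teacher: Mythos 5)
Your proposal is correct and follows essentially the same route as the paper, which likewise transfers the $X$-derivatives onto dilated kernel derivatives $(\partial^\beta\cG)_\ell$ via shift-invariance of cumulants (illustrated by (\ref{CGgrad})--(\ref{eq:dtau}) and deferred to \cite{eyinknotes,eyinkRec2015} for the general combinatorics), and then concludes with the same H\"older/Minkowski bounds as in Proposition \ref{bndTau}. Your explicit structural identity and the remark that the bare $f_i(X)$ contributions cancel (leaving a difference field in every block) simply spell out what the paper leaves as ``expressions of this type,'' so there is nothing to correct.
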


For the ``unresolved'' or ``fluctuation'' part of a field $f_\ell':=f-\ol{f}_\ell,$ we have the simple formula
\be  f_\ell'(X)  = -\int \rmd^{d+1} R \ \cG_\ell(R) \delta f(R;X), \ee
which gives
\begin{proposition} \label{lem:fluc}
\emph{(fluctuation estimates)}
\label{bndfpr} For open $O\subset\subset \Gamma$ and $p\in [1,\infty],$  $\|f'_\ell\|_{p,O} 
=\cO\left(\|\delta f(\ell)\|_{p,O}\right)$ and $\|f'_\ell\|_{p,O} =\cO\left(\ell^\sigma\right)$
when also $f\in B_{p,loc}^{\sigma,\infty}(\Gamma)$ for $0<\sigma\leq 1.$
\end{proposition}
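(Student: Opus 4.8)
The plan is to argue directly from the representation $f_\ell'(X) = -\int \rmd^{d+1}R\ \cG_\ell(R)\,\delta f(R;X)$ displayed just above the statement, which exhibits the fluctuation field as a $\cG_\ell$-weighted average, over space-time displacements $R$, of the increments $\delta f(R;\cdot)$. Since by our normalization $\cG_\ell\geq 0$, $\int \rmd^{d+1}R\ \cG_\ell(R)=1$, and ${\rm supp}(\cG_\ell)\subset\{|R|<\ell\}$, the first estimate follows from a single application of Minkowski's integral inequality, and the second is obtained by feeding in the definition of the Besov seminorm. No cancellation or commutator structure is needed here, in contrast to Propositions \ref{bndTau}--\ref{bndTauder}.

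Concretely, I would fix an open $O\subset\subset\Gamma$ and restrict to $\ell<\ell_O={\rm dist}(O,\partial\Gamma)$, so that $X+R\in\Gamma$ for all $X\in O$, $|R|<\ell$, and hence $\delta f(R;\cdot)\in L^p(O)$ is well-defined for each such $R$ (for $p=\infty$ the inequalities below are read pointwise a.e.). For $1\leq p<\infty$, Minkowski's integral inequality applied to the representation gives
\[
\|f_\ell'\|_{p,O}\ \leq\ \int_{|R|<\ell}\rmd^{d+1}R\ \cG_\ell(R)\,\|\delta f(R;\cdot)\|_{p,O}\ \leq\ \Big(\sup_{|R|<\ell}\|\delta f(R)\|_{p,O}\Big)\!\int \rmd^{d+1}R\ \cG_\ell(R)\ =\ \|\delta f(\ell)\|_{p,O},
\]
which is the claimed bound (with implied constant $1$); the case $p=\infty$ follows the same way from $|f_\ell'(X)|\leq \int \rmd^{d+1}R\ \cG_\ell(R)\,|\delta f(R;X)|$. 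If moreover $f\in B_{p,loc}^{\sigma,\infty}(\Gamma)$ with $0<\sigma\leq 1$, then by the definition (\ref{besov-def}) of the Besov seminorm, $\|\delta f(R)\|_{p,O}=\|f(\cdot+R)-f\|_{L^p(O)}\leq \|f\|_{B_p^{\sigma,\infty}(O)}\,|R|^\sigma$ for $|R|<\ell<\ell_O$, so $\|\delta f(\ell)\|_{p,O}=\cO(\ell^\sigma)$; combining with the previous display yields $\|f_\ell'\|_{p,O}=\cO(\ell^\sigma)$, with implied constant depending only on $O$ and $\|f\|_{B_p^{\sigma,\infty}(O)}$.

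The hard part is essentially non-existent: the only points needing attention are bookkeeping. One must ensure $\ell<\ell_O$ so that all shifts $X+R$ with $X\in O$, $|R|<\ell$ stay in $\Gamma$ (the standing hypothesis of this section, guaranteeing the increments are defined on $O$), and one must invoke the routine measurability/Fubini justification for Minkowski's integral inequality, which is valid since $(X,R)\mapsto\cG_\ell(R)\,\delta f(R;X)$ is jointly measurable and $\cG_\ell$ is a fixed integrable kernel of unit mass. Unlike Propositions \ref{bndTau}--\ref{bndTauder}, no commutator structure or cancellation among increments is exploited: the estimate merely expresses that averaging against a probability density supported at scale $\ell$ cannot increase the $L^p$ norm beyond the largest increment over that scale.
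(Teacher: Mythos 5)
Your argument is correct and is exactly the route the paper takes: the displayed representation $f'_\ell(X)=-\int \rmd^{d+1}R\,\cG_\ell(R)\,\delta f(R;X)$, the fact that $\cG_\ell\geq 0$ is a unit-mass kernel supported in the ball of radius $\ell$, and Minkowski's integral inequality give $\|f'_\ell\|_{p,O}\leq \|\delta f(\ell)\|_{p,O}$, after which the Besov bound follows directly from the definition (\ref{besov-def}), with the restriction $\ell<\ell_O$ handling well-definedness of the increments on $O$. Nothing further is needed.
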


Finally, we will also require estimates on $\Delta_\ell h=\ol{h}_\ell-\ul{h}_\ell$ for composite functions of the form $h(f,g)$, where $f,g\in L^\infty(\Gamma)$
and $h$ is a smooth function of two variables. We have the following Lemma:
\begin{lemma}\label{thermobnd}
For  $\ p\geq 1$, let $f\in (B_{p,loc}^{\sigma_p^f,\infty}\cap L^\infty)(\Gamma)$ and $g\in(B_{p,loc}^{\sigma_p^g,\infty}\cap L^\infty)(\Gamma)$.  Let $U\subset \mathbb{R}^2$ be open
and containing the closed convex hull of ${\mathcal R}={\rm ess.ran}(f,g)$, the essential range of the measurable 
function $(f,g)\in L^\infty(\Gamma,\mathbb{R}^2)$.  Consider $H:=h(f,g)$ with $h\in C^1(U,\mathbb{R})$. Then $H\in  
(B_{p,loc}^{\min\{\sigma_p^f, \sigma_p^g\},\infty}\cap L^\infty)(\Gamma).$ 
\end{lemma}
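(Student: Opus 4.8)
The plan is to reduce the assertion to a pointwise estimate on the space--time increments $\delta H(R;X):=H(X+R)-H(X)$ in terms of $\delta f$ and $\delta g$, and then to read off membership in $B_{p,loc}^{\min\{\sigma_p^f,\sigma_p^g\},\infty}(\Gamma)$ directly from the definition (\ref{besov-def})--(\ref{local-besov-def}). The conceptual heart of the argument is the reason behind the hypothesis that $U$ contains the \emph{closed convex hull} $K:={\rm conv}[\mathcal{R}]$ of the essential range, rather than just the essential range $\mathcal{R}$ itself: for a.e.\ $X$ both points $(f(X),g(X))$ and $(f(X+R),g(X+R))$ lie in $\mathcal{R}$, so by convexity the whole segment joining them lies in $K\subset U$, where $h$ is $C^1$; this is exactly what makes the fundamental theorem of calculus available for estimating the increment of $h$.

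Concretely, I would proceed as follows. First, $\mathcal{R}$ is compact, being the essential range of an $L^\infty$ map (cf. Assumption \ref{smoothAss}), so $K$ is a compact convex subset of the open set $U$ and hence $\|h\|_{L^\infty(K)}<\infty$ and $L:=\|\nabla h\|_{L^\infty(K)}<\infty$, since $h\in C^1(U)$; in particular $H=h(f,g)\in L^\infty(\Gamma)$ at once. Next, fix an open $O\subset\subset\Gamma$ and a shift $R$ with $|R|<h_O={\rm dist}(O,\partial\Gamma)$, so that $O+R\subset\Gamma$. For a.e.\ $X\in O$ both $(f(X),g(X))$ and $(f(X+R),g(X+R))$ lie in $K$, so the fundamental theorem of calculus along the joining segment, together with $\sup_K|\nabla h|\leq L$, gives the pointwise bound
\[ |\delta H(R;X)|\leq L\left(|\delta f(R;X)|+|\delta g(R;X)|\right),\qquad \mbox{a.e. }X\in O. \]
Taking $L^p(O)$ norms (Minkowski) and then $\sup_{|R|<\ell}$ yields $\|\delta H(\ell)\|_{p,O}\leq L\left(\|\delta f(\ell)\|_{p,O}+\|\delta g(\ell)\|_{p,O}\right)$. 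Finally, by (\ref{besov-def}) one has $\|\delta f(R)\|_{p,O}\leq\|f\|_{B_p^{\sigma_p^f,\infty}(O)}|R|^{\sigma_p^f}$ and likewise for $g$ whenever $|R|<h_O$; since the Besov exponents do not exceed $1$ and $|R|$ ranges over the bounded interval $(0,h_O)$, both $|R|^{\sigma_p^f}$ and $|R|^{\sigma_p^g}$ are dominated by a constant multiple of $|R|^{\min\{\sigma_p^f,\sigma_p^g\}}$. Hence $\sup_{|R|<h_O}\|\delta H(R)\|_{p,O}/|R|^{\min\{\sigma_p^f,\sigma_p^g\}}<\infty$, which together with $\|H\|_{L^p(O)}\leq\|h\|_{L^\infty(K)}|O|^{1/p}<\infty$ is precisely $H\in B_p^{\min\{\sigma_p^f,\sigma_p^g\},\infty}(O)$. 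As $O\subset\subset\Gamma$ was arbitrary, $H\in B_{p,loc}^{\min\{\sigma_p^f,\sigma_p^g\},\infty}(\Gamma)$.

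I expect the only genuinely delicate point to be the very first one --- guaranteeing that the interpolating segment stays inside the region where $h$ is regular --- and this is what forces the convex-hull formulation of the hypothesis on $U$: a bound on $\nabla h$ valid merely on $\mathcal{R}$ would not suffice, since the segment can leave $\mathcal{R}$. Everything after that is routine: Minkowski's inequality and an elementary comparison of powers of $|R|$ on a bounded interval. (This is, of course, the familiar statement that composition with a $C^1$ function is bounded on $B_p^{\sigma,\infty}$-type spaces for $\sigma\in(0,1)$, but the direct argument above is self-contained.) The same argument, applied termwise, covers smooth functions of any finite number of arguments, which is the form actually used later for thermodynamic functions of $(u,\vr)$.
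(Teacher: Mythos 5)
Your proposal is correct and follows essentially the same route as the paper's proof: the paper also writes $\delta H(R;X)$ via the mean value theorem along the segment joining $(f(X),g(X))$ and $(f(X+R),g(X+R))$, uses compactness of the closed convex hull inside $U$ to bound $\vec{\partial}h$, and then passes to $L^p(O)$ norms of the increments. Your use of the fundamental theorem of calculus instead of the mean value theorem, and your explicit handling of the comparison of powers of $|R|$, are only cosmetic differences.
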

\begin{proof}
Clearly, $H\in L^\infty(\Gamma).$ Since $h\in C^1(U,\mathbb{R})$, the mean value theorem gives:
\begin{eqnarray}
\delta H(R;X) & := &h(f(X+R), g(X+R)) -h(f(X), g(X)) \cr
&=& (\delta f(R;X),  \delta g(R;X)) \cdot\vec{\partial} h(f_*,g_*)
\end{eqnarray}
for $(f_*, g_*)$ on the line segment joining  $(f(X),g(X))$, $(f(X+R),g(X+R))$. We have used the notation  $\vec{\partial}= (\partial/\partial f, \partial/\partial g)$.  
Since ${\mathcal R}\subset U$ is compact, then so also is its closed convex hull ${\rm conv}({\mathcal R})\subset U$ and $\vec{\partial}h$ is bounded 
on ${\rm conv}({\mathcal R})$.  It follows for any open $O\subset\subset \Gamma,$ $|R|<\ell_O,$  $p\geq 1$, 
$\|\delta H(R)\|_{p,O} =\cO\left(|R|^{\min\{\sigma_p^f, \sigma_p^g\}}\right)$.
\hfill $\Box$ \end{proof}
\begin{corollary}\label{prodBes}
Let $f,g$ be as in Lemma \ref{thermobnd}.  Then $f g\in (B_{p,loc}^{\min\{\sigma_p^f,\sigma_p^g\},\infty}\cap L^\infty)(\Gamma)$.
\end{corollary}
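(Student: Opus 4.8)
The plan is to obtain this as an immediate special case of Lemma \ref{thermobnd}. First I would take $h(x,y):=xy$, which is a polynomial and hence lies in $C^\infty(\mathbb{R}^2)$; in particular $h\in C^1(U,\mathbb{R})$ for any open set $U\subset\mathbb{R}^2$ containing the closed convex hull of ${\mathcal R}={\rm ess.ran}(f,g)$, which is precisely the kind of set furnished by the hypotheses of Lemma \ref{thermobnd}. Applying that lemma with this $h$ and $H:=h(f,g)=fg$ then yields at once that $fg\in(B_{p,loc}^{\min\{\sigma_p^f,\sigma_p^g\},\infty}\cap L^\infty)(\Gamma)$, which is exactly the assertion of the corollary. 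This is the route I would present, since it makes the logical dependence on Lemma \ref{thermobnd} transparent and requires no new computation.

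If a self-contained argument were preferred, one could instead note the elementary increment identity $\delta(fg)(R;X)=f(X+R)\,\delta g(R;X)+\delta f(R;X)\,g(X)$, bound the first term in $L^p(O)$ by $\|f\|_{L^\infty(\Gamma)}\,\|\delta g(R)\|_{p,O}$ and the second by $\|g\|_{L^\infty(\Gamma)}\,\|\delta f(R)\|_{p,O}$, and then invoke the Besov hypotheses on $f$ and $g$ to conclude $\|\delta(fg)(R)\|_{p,O}=\cO\!\left(|R|^{\min\{\sigma_p^f,\sigma_p^g\}}\right)$; combined with the obvious bound $fg\in L^\infty(\Gamma)$ and the definition (\ref{besov-def})--(\ref{local-besov-def}) of the local Besov space, this gives the claim directly. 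In effect this is just Lemma \ref{thermobnd} with the mean-value term $\vec{\partial}h(f_*,g_*)$ made explicit.

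I do not expect any real obstacle here. The only point requiring a moment's care is that the mean-value step inside Lemma \ref{thermobnd} needs $\vec{\partial}h=(\partial_x h,\partial_y h)=(y,x)$ to be bounded on ${\rm conv}({\mathcal R})$, but since ${\mathcal R}$ is compact so is its closed convex hull, and boundedness is automatic; no hypotheses beyond those already in force in Lemma \ref{thermobnd} are needed. Thus the corollary follows with essentially no further work.
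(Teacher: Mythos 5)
Your proposal is correct and is exactly the paper's intended argument: the corollary is stated without separate proof precisely because it is the special case $h(x,y)=xy$ of Lemma \ref{thermobnd}, with boundedness of $\vec{\partial}h=(y,x)$ on the compact convex hull of the essential range automatic from $f,g\in L^\infty(\Gamma)$. Your alternative increment-identity argument is a fine self-contained variant but adds nothing beyond the lemma.
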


The estimate on $\Delta_\ell h=\ol{h}_\ell-\ul{h}_\ell$ is as follows: 
 \begin{proposition}\label{hessThermo}
Let  $h\in C^2(U)$ with $f,g,U$ as in Lemma \ref{thermobnd}.  For  open $O\subset\subset \Gamma$
\be
 \|\Delta_\ell h \|_{p/2,O} =\cO\left(\ell^{2\min\{\sigma_{p}^f,\sigma_{p}^g\}}\right), \quad 
 \mbox{$p\geq 2$} 
\ee
Assuming only $f,g \in L^\infty(\Gamma),$ then at least 
\be \lim_{\ell\rightarrow 0}  \|\Delta_\ell h \|_{p/2,O} =0, \quad 
\mbox{$2\leq p<\infty$,}
\label{density-est-2}\ee
but without an estimate of the rate. 
\end{proposition}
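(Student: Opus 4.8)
The plan is to reduce $\Delta_\ell h$ to an averaged second‑order Taylor remainder of $h$ and then bound that remainder by exactly the increment estimates already used for the cumulants in Proposition \ref{bndTau}; this is the compressible analogue of the Constantin--E--Titi commutator bound, with the bilinear cancellation replaced by a Taylor cancellation. First I would fix a point $X\in O$ (taking $\ell<\ell_O$ so that the coarse‑grained fields are defined on $O$) and abbreviate $a:=\ol{f}_\ell(X)$, $b:=\ol{g}_\ell(X)$. Since $h\in C^2(U)$, Taylor's theorem with integral remainder expands $h(f(X+R),g(X+R))$ about the base point $(a,b)$ into $h(a,b)$, a linear term $\partial_f h(a,b)(f(X+R)-a)+\partial_g h(a,b)(g(X+R)-b)$, and a quadratic remainder $\mathcal{T}_2(R;X)$ whose coefficients are $\int_0^1(1-t)\,{\rm Hess}\,h\big((1-t)(a,b)+t(f(X+R),g(X+R))\big)\,\rmd t$. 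The base points of this Hessian are convex combinations of $(f(X+R),g(X+R))\in{\mathcal R}$ (for a.e.\ $X+R$) and of $(a,b)\in{\rm conv}({\mathcal R})$ (the latter because $(a,b)$ is the barycenter of the probability measure $\cG_\ell(R')\,\rmd^{d+1}R'$ pushed forward by $(f,g)$), hence lie in the compact set ${\rm conv}({\mathcal R})\subset U$, on which ${\rm Hess}\,h$ is bounded; thus $|\mathcal{T}_2(R;X)|\leq C(|f(X+R)-a|^2+|g(X+R)-b|^2)$ with $C$ depending only on $h$ and ${\rm conv}({\mathcal R})$.

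Next I would average against $\cG_\ell(R)$. The zeroth‑order term reproduces $h(a,b)=\ul{h}_\ell(X)$, while both linear terms vanish identically because $\int\cG_\ell(R)(f(X+R)-a)\,\rmd^{d+1}R=\ol{f}_\ell(X)-a=0$ and likewise for $g$. This yields the exact identity
\be \Delta_\ell h(X) = \int \rmd^{d+1}R\ \cG_\ell(R)\,\mathcal{T}_2(R;X), \ee
so that $|\Delta_\ell h(X)|\leq C\int\cG_\ell(R)(|f(X+R)-a|^2+|g(X+R)-b|^2)\,\rmd^{d+1}R$. Writing $f(X+R)-a=\delta f(R;X)-f'_\ell(X)$ and using Jensen's inequality in the form $|f'_\ell(X)|^2=\big|\int\cG_\ell(R)\,\delta f(R;X)\,\rmd^{d+1}R\big|^2\leq\int\cG_\ell(R)|\delta f(R;X)|^2\,\rmd^{d+1}R$ (and the same for $g$), I get the pointwise bound $|\Delta_\ell h(X)|\leq C'\int\cG_\ell(R)(|\delta f(R;X)|^2+|\delta g(R;X)|^2)\,\rmd^{d+1}R$.

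The last step is to take $L^{p/2}(O)$ norms. Since $p\geq 2$ one has $p/2\geq 1$, so Minkowski's integral inequality applies and gives $\big\|\int\cG_\ell(R)|\delta f(R;\cdot)|^2\,\rmd^{d+1}R\big\|_{p/2,O}\leq\int\cG_\ell(R)\|\delta f(R)\|_{p,O}^2\,\rmd^{d+1}R\leq\|\delta f(\ell)\|_{p,O}^2$, using that ${\rm supp}(\cG_\ell)$ lies in the ball of radius $\ell$ together with the notation $\|\delta f(\ell)\|_{p,O}=\sup_{|R|<\ell}\|\delta f(R)\|_{p,O}$ of Proposition \ref{bndTau}. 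Hence $\|\Delta_\ell h\|_{p/2,O}=\cO(\|\delta f(\ell)\|_{p,O}^2+\|\delta g(\ell)\|_{p,O}^2)$, which under the Besov hypothesis is $\cO(\ell^{2\sigma_p^f}+\ell^{2\sigma_p^g})=\cO(\ell^{2\min\{\sigma_p^f,\sigma_p^g\}})$, and which under only $f,g\in L^\infty(\Gamma)$ tends to $0$ as $\ell\to0$ for $2\leq p<\infty$ by the same strong $L^p$‑continuity of translations invoked to prove (\ref{density-est}). For the special case $h(f,g)=fg$ this argument reduces to the cumulant estimate $\ol{\tau}_\ell(f,g)$ of Proposition \ref{bndTau}.

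The step I expect to require the most care is controlling the domain of the Hessian in the Taylor remainder: the entire estimate rests on the convexity observation that the intermediate points $(1-t)(a,b)+t(f(X+R),g(X+R))$ remain in the compact set ${\rm conv}({\mathcal R})\subset U$ on which $h$ is $C^2$ and ${\rm Hess}\,h$ uniformly bounded. This is precisely where the hypothesis ${\rm conv}({\mathcal R})\subset U$ (equivalently $K\subset U$ in Assumption \ref{smoothAss}) enters and cannot be dropped; everything else is a routine combination of Jensen's and Minkowski's inequalities with the translation‑continuity facts already established for Proposition \ref{bndTau}.
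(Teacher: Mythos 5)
Your argument is correct, and it reaches exactly the paper's intermediate bound $\|\Delta_\ell h\|_{p/2,O}=\cO\bigl(\max\{\|\delta f(\ell)\|_{p,O},\|\delta g(\ell)\|_{p,O}\}^2\bigr)$, from which both conclusions follow as in the text; but your decomposition is slightly different from the paper's. The paper adds and subtracts $h(f,g)$ and the linear correction $(f'_\ell,g'_\ell)\cdot\vec{\partial}h(f,g)$, so it handles two pieces: a Taylor remainder of the \emph{averaged} composition expanded about the pointwise values $(f(X),g(X))$, quadratic in $(\delta f,\delta g)$, plus a Lagrange-remainder term quadratic in the fluctuations $(f'_\ell,g'_\ell)$, which it then controls via H\"older and the fluctuation estimate (Proposition \ref{lem:fluc}). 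You instead expand once, about the coarse-grained point $(\ol{f}_\ell(X),\ol{g}_\ell(X))$, and exploit that the linear term is annihilated identically by averaging against $\cG_\ell$ (since the base point is the $\cG_\ell$-barycenter), giving the exact identity $\Delta_\ell h(X)=\int \rmd^{d+1}R\,\cG_\ell(R)\,\mathcal{T}_2(R;X)$; Jensen then replaces the separate fluctuation estimate. Both routes rest on the same two pillars — boundedness of the Hessian on the compact set ${\rm conv}({\mathcal R})\subset U$ (your convexity observation about the intermediate points is exactly the one the paper makes for its $(f_\star,g_\star)$ term) and the increment/translation-continuity estimates behind Proposition \ref{bndTau} — so the gain of your version is mainly economy: one expansion, one exact identity, no separate appeal to Proposition \ref{lem:fluc}. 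One cosmetic slip: with the paper's convention $f'_\ell:=f-\ol{f}_\ell$ you have $f(X+R)-\ol{f}_\ell(X)=\delta f(R;X)+f'_\ell(X)$ (not minus), but since only $|f'_\ell(X)|^2$ enters via Jensen this does not affect the estimate.
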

 \begin{proof}
 Using the notation $\vec{\partial}= (\partial/\partial f, \partial/\partial g)$, we have:
\begin{eqnarray*}
 \Delta_\ell h &:=&\   \ol{h(f,g)}_\ell- h(\ol{f}_\ell, \ol{g}_\ell)\\
 &=&\ \Big(\ol{h(f,g)}_\ell- h(f,g) + (f'_\ell,  g'_\ell) \cdot\vec{\partial} h(f,g) \Big) \\
 && +\Big( h(f,g) -h(\ol{f}_\ell, \ol{g}_\ell)-(f'_\ell,  g'_\ell)  \cdot \vec{\partial} h(f,g) \Big).
\end{eqnarray*}
The first term can be rewritten as
\begin{eqnarray*}
  &&\ol{h(f,g)}_\ell- h(f,g) + (f'_\ell,  g'_\ell) \cdot\vec{\partial} h(f,g)  \\
 && = \int d^{d+1}R \ \cG_\ell(R) \ \Big(h(f(X+R),g(X+R))- h(f(X),g(X))\\
 && \quad\quad\quad\quad\quad\quad\quad\quad\quad - (\delta f(R;X),  \delta g(R;X)) \cdot\vec{\partial} h(f(X),g(X)) \Big) \\
  && = \int d^{d+1}R\  \cG_\ell(R) \left.(\vec{\partial}  \vec{\partial})  h\right|_{(f_*,g_*)} : (\delta f(R;X),  \delta g(R;X)) (\delta f(R;X),  \delta g(R;X)),
\end{eqnarray*}
where in the second equality the Taylor theorem with remainder was employed and $(f_*,g_*)$ is defined similarly 
as in Lemma \ref{thermobnd}. Likewise, using $f=\ol{f}_\ell+f'_\ell,$ the second term can be rewritten as 
\begin{eqnarray*}
  && h(f,g) - h(\ol{f}_\ell, \ol{g}_\ell) - (f'_\ell,  g'_\ell) \cdot\vec{\partial} h(f,g)  \\
 && \qquad = \left.(\vec{\partial}  \vec{\partial})  h\right|_{(f_{\star},g_{\star})} : (f'_\ell, g'_\ell) ( f'_\ell, g'_\ell),
\end{eqnarray*}  
and $(f_\star,g_\star)$ is a point on the line segment connecting $(\ol{f}_\ell(X),\ol{g}_\ell(X))$,$(f(X),g(X)).$
Note that $(\ol{f}_\ell(X),\ol{g}_\ell(X))\in {\rm conv}({\mathcal R})$ because the coarse-grained field with a non-negative 
mollifier $\cG_\ell$ is a limit of averages of values in ${\rm ess.ran.}(f,g).$ Thus, 
$ \left.(\vec{\partial}  \vec{\partial})  h\right|_{(f_{\star},g_{\star})}$ is uniformly bounded, since 
$(\vec{\partial}  \vec{\partial})  h$ is bounded on ${\rm conv}({\mathcal R}).$ It follows from the above 
formulas, the H\"older inequality, and Proposition \ref{lem:fluc} that 
\be
\|\Delta_\ell h\|_{p/2,O} =\cO\left( \max\{\|\delta f(\ell)\|_{p,O}, \|\delta g(\ell)\|_{p,O}\}^2\right). 
\label{Lpbd-comp} \ee
The above estimate immediately yields $\|\Delta_\ell h\|_{p/2,O} =\cO\left(\ell^{2\min\{\sigma_{p}^f,\sigma_{p}^g\}}\right)$
assuming the appropriate Besov regularity.

The final statement of the proposition is obtained from the estimate (\ref{Lpbd-comp}) and the strong continuity of 
the shift operators in the $L^p(O)$-norm.   \hfill $\Box$ \end{proof}


One last estimate will be needed:

\begin{proposition}\label{derivThermo}
Let  $h\in C^1(U)$ with $f,g,U$ as in Lemma \ref{thermobnd}.  For  open $O\subset\subset \Gamma$
\be
 \|\nabla_x \ul{h}_\ell \|_{p,O} =\cO\left(\ell^{\min\{\sigma_{p}^f,\sigma_{p}^g\}-1}\right), \quad p\geq 1.
\ee
\end{proposition}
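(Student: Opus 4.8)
The plan is to reduce everything to the cumulant-derivative bounds of Proposition~\ref{bndTauder} via the chain rule. Since $\ul{h}_\ell = h(\ol{f}_\ell,\ol{g}_\ell)$ with $h\in C^1(U)$, I would first write
\[
\nabla_x \ul{h}_\ell = (\partial_f h)(\ol{f}_\ell,\ol{g}_\ell)\,\nabla_x\ol{f}_\ell + (\partial_g h)(\ol{f}_\ell,\ol{g}_\ell)\,\nabla_x\ol{g}_\ell .
\]
The key observation, already used in the proof of Proposition~\ref{hessThermo}, is that because the mollifier satisfies $\cG_\ell\geq 0$ with unit mass, the coarse-grained pair $(\ol{f}_\ell(X),\ol{g}_\ell(X))$ lies in the closed convex hull ${\rm conv}({\mathcal R})$ of the essential range ${\mathcal R}={\rm ess.ran}(f,g)$, which is a compact subset of $U$. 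Hence $\vec{\partial}h=(\partial_f h,\partial_g h)$ is bounded there, giving the pointwise inequality $|\nabla_x\ul{h}_\ell|\leq C\big(|\nabla_x\ol{f}_\ell| + |\nabla_x\ol{g}_\ell|\big)$ with $C$ depending only on $h$ and ${\rm conv}({\mathcal R})$.

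Next I would bound $\|\nabla_x\ol{f}_\ell\|_{p,O}$ and $\|\nabla_x\ol{g}_\ell\|_{p,O}$ separately. Writing $\ol{f}_\ell=\tau_\ell(f)$ and noting that the spatial gradient $\nabla_x$ consists of a subcollection of the space-time derivatives $\partial_k=\partial/\partial X_k$, Proposition~\ref{bndTauder} with $n=1$, $m=1$ gives $\|\nabla_x\ol{f}_\ell\|_{p,O}=\cO\big(\ell^{-1}\|\delta f(\ell)\|_{p,O}\big)$, which becomes $\cO(\ell^{\sigma_p^f-1})$ once the assumed regularity $f\in B_{p,loc}^{\sigma_p^f,\infty}(\Gamma)$ is invoked; similarly $\|\nabla_x\ol{g}_\ell\|_{p,O}=\cO(\ell^{\sigma_p^g-1})$. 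Combining with the pointwise estimate and taking the $L^p(O)$-norm,
\[
\|\nabla_x\ul{h}_\ell\|_{p,O}\leq C\big(\|\nabla_x\ol{f}_\ell\|_{p,O}+\|\nabla_x\ol{g}_\ell\|_{p,O}\big)=\cO(\ell^{\sigma_p^f-1})+\cO(\ell^{\sigma_p^g-1})=\cO\big(\ell^{\min\{\sigma_p^f,\sigma_p^g\}-1}\big),
\]
which is the claimed bound for every $p\geq 1$.

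There is no genuine obstacle: the only points requiring care are (i) keeping $(\ol{f}_\ell,\ol{g}_\ell)$ inside the region where $\vec{\partial}h$ is controlled --- handled by the convex-hull remark --- and (ii) noting that applying the cumulant-derivative estimate to $\nabla_x$ rather than the full space-time gradient is legitimate, since the components of $\nabla_x$ are among the $X_k$-derivatives. If one prefers not to appeal to Proposition~\ref{bndTauder}, one can instead bound $\nabla_x\ol{f}_\ell$ directly from identity~(\ref{CGgrad}), which realizes $\partial_k\ol{f}_\ell$ as an $\ell^{-1}$-weighted integral of increments $\delta f$ against $(\partial_{R_k}\cG)_\ell$, and then apply the definition of the Besov norm.
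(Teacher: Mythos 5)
Your argument is correct and is essentially the paper's own proof: the chain rule, boundedness of $\vec{\partial}h$ on ${\rm conv}({\mathcal R})$ because $(\ol{f}_\ell,\ol{g}_\ell)$ stays in that compact subset of $U$, and the $n=1$, $m=1$ case of Proposition \ref{bndTauder} to get $\cO\left(\ell^{-1}\max\{\|\delta f(\ell)\|_{p,O},\|\delta g(\ell)\|_{p,O}\}\right)$, hence $\cO\left(\ell^{\min\{\sigma_p^f,\sigma_p^g\}-1}\right)$ under the Besov assumption. The only cosmetic difference is that you sum the two gradient contributions while the paper writes the bound with a maximum; these are equivalent up to a constant.
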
 
\begin{proof}
By the chain rule, $\nabla_x \ul{h}=\vec{\partial}h(\ol{f}_\ell,\ol{g}_\ell)\cdot (\nabla_x\ol{f}_\ell,\nabla_x\ol{g}_\ell).$
Since $(\ol{f}_\ell,\ol{g}_\ell)$ is in the closed convex hull of ${\mathcal R},$ one immediately obtains from 
Proposition \ref{bndTauder} that 
\be \|\nabla_x\ul{h}_\ell\|_{p,O} =\cO\left( \frac{1}{\ell}\max\{\|\delta f(\ell)\|_{p,O},\|\delta g(\ell)\|_{p,O}\}\right), \label{grad-ulh} \ee
which gives the claimed estimate for the assumed Besov regularity. 
\hfill $\Box$ \end{proof}


\section{Proof of Theorem \ref{Onsager1E}}\label{sec:proofThm1}

By assumption $u,\vr,\bu\in L^\infty(\Omega \times (0,T))\subset L_{loc}^p(\Omega \times (0,T))$. 
We shall obtain estimates in $L^p(O)$ for any open set $O\subset\subset \Gamma$. To simplify 
expressions in the proof, we let $O$ be implicit in this section and everywhere use $\|\cdot\|_p$  
to denote the $L_p(O)$-norm $\|\cdot\|_{p,O}$ .  Also, all estimates assume $\ell<\ell_O= {\rm dist}(O,\partial\Gamma)$.
We consider in order the three balance equations (\ref{eq:KE*Euler})--(\ref{eq:s*Euler}) in Theorem 1.

\vspace{2mm}

\noindent \emph{Kinetic Energy:}  Setting $\ve= 0,$ the coarse-grained kinetic energy 
balance (\ref{eq:CGKE}) for compressible Navier-Stokes simplifies, because terms involving $\bT^\ve$ vanish:
\be
\partial_t \left( \frac{1}{2} \ol{\vr}_\ell |\tilde{\bu}_\ell |^2\right) + \nabla_x \cdot \bJ_\ell^v =    \ol{p}_\ell\ol{\Theta}_\ell  -Q_\ell^{\rm flux}  ,
\label{inert-eq:CGKE2}
\ee
where the various terms are defined by:
 \begin{eqnarray}  \label{inert-eq:Juell}
\bJ_\ell^v &:=& \left(\frac{1}{2}\ol{\vr}_\ell|\tilde{\bu}_\ell|^2 
+\ol{p}_\ell\right)\tilde{\bu}_\ell+ \ol{\vr}_\ell\tilde{\bu}_\ell\cdot\tilde{ \tau}_\ell (\bu, \bu)  -\frac{\ol{p}_\ell }{\ol{\vr}_\ell }\ol{ \tau}_\ell(\vr, \bu) , \\
 \label{inert-eq:Quell}
Q_\ell^{\rm flux} &:=& \frac{\nabla_x\ol{p}_\ell }{\ol{\vr}_\ell } \cdot \ol{ \tau}_\ell(\vr, \bu)- \ol{\vr}_\ell \nabla_x \tilde{\bu}_\ell :\tilde{ \tau}_\ell(\bu,\bu) .
 \end{eqnarray}
We now consider the limit as $\ell\to 0$ of the equation (\ref{inert-eq:CGKE2}). 
Of course, by standard results, $\ol{u}_\ell,$ $\ol{\vr}_\ell,$ $\ol{\bu}_\ell,$ $\ol{p}_\ell$ $\rightarrow $ $u,$ $\vr,$ $\bu,$ $p$ 
strong in $L_{loc}^p$ for any $1\leq p<\infty$ (see e.g. \cite{gilbarg2015elliptic}, Lemma 7.2 or 
\cite{evans2015measure}, \S 4.2.1, Theorem 1). As a special case of (\ref{favref})
\be \tilde{\bu}_\ell=\ol{\bu}_\ell+\ol{\tau}_\ell(\vr,\bu)/\ol{\vr}_\ell, \label{favrev} \ee 
which implies for any $p\geq 1$ that 
$$ \|\tilde{\bu}_\ell-\bu\|_p \leq \|\ol{\bu}_\ell-\bu\|_p + \|1/\vr\|_\infty\|\ol{\tau}_\ell(\vr,\bu)\|_p,$$
so that $\tilde{\bu}_\ell\rightarrow \bu$ strongly as well. Here (\ref{density-est}) of Proposition \ref{bndTau} was used. 
We infer that $ \frac{1}{2} \ol{\vr}_\ell |\tilde{\bu}_\ell|^2$ converges to $\frac{1}{2} \vr |\bu|^2$ strong in $L_{loc}^p$
for any $p\geq 1,$ and thus 
\be \partial_t \left( \frac{1}{2} \ol{\vr}_\ell |\tilde{\bu}_\ell |^2\right) \Dto \ \partial_t \left( \frac{1}{2} \vr |\bu|^2\right) \ee
as $\ell\to 0. $
Using the special case of (\ref{favreTau})
\be \tilde{\tau}_\ell(\bu,\bu) = \ol{\tau}_\ell(\bu,\bu) + \frac{1}{\ol{\vr}_\ell}\ol{\tau}_\ell(\vr,\bu,\bu) 
-\frac{1}{\ol{\vr}^2_\ell}\ol{\tau}_\ell(\vr,\bu)\ol{\tau}_\ell(\vr,\bu), \label{favreStress} \ee
one obtains by exactly similar arguments with Proposition \ref{bndTau} that 
\be \nabla_x \cdot \bJ_\ell^v\ \Dto\ \nabla_x  \left( (\frac{1}{2} \vr |\bu|^2+p)\bu\right). \ee
Also, under our assumptions, $Q_\ell^{\rm flux}$ has a distributional limit:
\be Q_\ell^{\rm flux}\ \Dto\ Q_{\rm flux}. \ee
Thus, all of the terms in (\ref{inert-eq:CGKE2}) except 
$\ol{p}_\ell\ol{\Theta}_\ell$ have been proved to have distributional limits as $\ell\to 0.$ It follows that the limit of 
$\ol{p}_\ell\ol{\Theta}_\ell$ also exists and equals $-Q_{\rm flux}- \partial_t \left( \frac{1}{2} \vr |\bu|^2\right)-\nabla_x  
\left( (\frac{1}{2} \vr |\bu|^2+p)\bu\right),$ independent of choice of $\cG$. 
Thus, 
\be \ol{p}_\ell\ol{\Theta}_\ell\ \Dto\ p \circ \Theta \ee
which completes the derivation of the kinetic energy balance (\ref{eq:KE*Euler}).
\vspace{4mm}

\noindent \emph{Internal Energy}: From (\ref{eq:KE*Euler}), the internal energy constructed as $u=E- \frac{1}{2}\vr |\bu|^2$, satisfies (\ref{eq:IE*Euler}) distributionally.  This could be alternatively deduced by considering the $\ell \to 0$ limit of the intrinsic resolved internal energy balance (\ref{eq:CGIE*})  with $\ve=0$.
\vspace{2mm}

\noindent \emph{Entropy}: Setting $\ve=0$ in the intrinsic resolve entropy equation (\ref{eq:LSs*}), we obtain
\be
\partial_t \ul{s}_\ell^*+ \nabla_x \cdot \bJ_\ell^{s*} =   \Sigma_\ell^{\rm inert *} ,  \label{E-eq:LSs*}
\ee
for 
 \begin{eqnarray}  \label{E-eq:Jsell*}
\bJ_\ell^{s*} &:=& \bJ_\ell^s + \ul{\beta}_\ell \bJ_\ell^k,\\
 \label{E-eq:Jsell}
\bJ_\ell^s &:=& \ul{s}_\ell \ol{\bu}_\ell+ \ul{\beta}_\ell {\ol{\tau}_\ell (u,\bu)} -\ul{\lambda}_\ell\ol{ \tau}_\ell(\vr, \bu),\\
\label{E-Jkell} 
 \bJ_\ell^k &:=&\frac{1}{2} \ol{\vr}_\ell \tilde{\tau}_\ell(v_i,v_i)\tilde{\bu}_\ell + \ol{\tau}_\ell (p,\bu)+ \frac{1}{2} \ol{\vr}_\ell \tilde{\tau}_\ell(v_i,v_i,\bu) ,
\end{eqnarray}
and, with $\Sigma_\ell^{\rm inert *}=  - I_\ell^{\rm flux} +  \Sigma_\ell^{\rm flux *},$ for 
 \begin{eqnarray}  \label{E-eq:Isell}
I_\ell^{\rm flux}&:=& \ul{\beta}_\ell (\ol{p}_\ell -\ul{p}_\ell) \ol{\Theta}_\ell,\\
  \label{E-eq:Lambell*}
 \Sigma_\ell^{\rm flux *}&:=&  \Sigma_\ell^{\rm flux} +  \ul{\beta}_\ell Q_\ell^{\rm flux} + \partial_t\ul{\beta}_\ell\ k_\ell + \nabla_x\ul{\beta}_\ell \cdot \bJ_\ell^k  ,\\
 \label{E-eq:Lambell}
 \Sigma_\ell^{\rm flux}&:=&  \nabla_x \ul{\beta}_\ell \cdot\ol{\tau}_\ell (u,\bu) - \nabla_x \ul{\lambda}_\ell \cdot\ol{ \tau}_\ell(\vr,\bu). 
 \end{eqnarray}
We next show that  $\partial_t \ul{s}_\ell^*+ \nabla_x \cdot \bJ_\ell^{s*}\Dto\ \partial_t s + \nabla_x \cdot (s\bu)$ as $\ell\to 0.$
Note that  
$$\|s(\ol{u}_\ell,\ol{\vr}_\ell)-s(u,\vr)\|_p\leq \|  \ol{s(u,\vr)}_\ell-s(u,\vr)\|_p+\|  \ol{s(u,\vr)}_\ell-s(\ol{u}_\ell,\ol{\vr}_\ell)\|_p.$$ 
Obviously $\ol{s}_\ell\to s$ strong in $L_{loc}^p$ for $p\geq 1,$ but also $\|\Delta_\ell s\|_p\rightarrow 0$ by (\ref{density-est-2}) of Proposition 
\ref{hessThermo}.  
Thus,  $\ul{s}_\ell\to s$ strong in $L_{loc}^p$. Also, $\|\ul{\beta}_\ell k_\ell\|_p\rightarrow 0$ by (\ref{density-est}) of Proposition \ref{bndTau}. 
It follows that $\ol{s}_\ell^*\to s$ strong in $L_{loc}^p$  for $p\geq 1$ and thus 
$$ \partial_t \ul{s}_\ell^*\ \Dto \ \partial_t s(u,\vr).$$ 
Using the formula (\ref{favreStress}) for $\tilde{\tau}_\ell(\bv,\bv)$ and the similar formula for $\tilde{\tau}_\ell(\bv,\bv,\bv)$
that follows from (\ref{favreTau3}), then similar arguments with Propositions \ref{bndTau} and \ref{hessThermo} show that 
$\bJ_\ell^{s*} \Dto s\bu$ strong in $L_{loc}^p$  for $p\geq 1$ and thus 
$$ \nabla_x \cdot \bJ_\ell^{s*} \ \Dto \ \nabla_x\cdot \left( s(u,\vr)\bu\right). $$
We infer from (\ref{E-eq:LSs*}) that the distributional limit of $\Sigma_\ell^{\rm inert *}$ as $\ell\to 0$ exists and  
is equal to 
$\Sigma_{\rm flux}:=\partial_t s + \nabla_x \cdot (s\bu).$ 
Thus, entropy balance (\ref{eq:s*Euler}) holds, with  
\be \Sigma_\ell^{\rm inert *}\ \Dto\ \Sigma_{\rm flux}. \ee
This completes the proof of Theorem 1. \hfill $\Box$
\vspace{2mm}

\section{Proof of Theorem \ref{Onsager1NS}}\label{sec:proofThm1NS}


\vspace{-2mm}
To prove that the strong limits of $u^\ve,$ $\vr^\ve,$ $\bu^\ve$ in $L_{loc}^p(\Gamma)$ for some $1\leq p<\infty$ 
as $\ve\rightarrow 0$ satisfy the Euler equations weakly, we use the concept of ``coarse-grained solution'' 
discussed in section \ref{sec:CGWS}. The coarse-grained Navier-Stokes system with 
transport coefficients scaled by $\ve$ appears the same as (\ref{eq:CGrho})--(\ref{eq:CGTE})
except that there is now a factor $\ve$ implicitly contained in the terms $\bT^\ve$ and $\bq^\ve$ wherever 
they appear.  Our strategy shall be to show that, pointwise in space-time, these terms indeed vanish as $\ve\rightarrow 0,$
while all of the other terms in the coarse-grained Navier-Stokes equation converge pointwise as $\ve\rightarrow 0$ 
to the corresponding terms in the coarse-grained Euler equations for the limiting fields $u,$ $\vr,$ $\bu.$

Here again, we let the open set $O\subset\subset \Gamma$ be implicit in the estimates below 
and use $\|\cdot\|_p$ to represent the $L_p(O)$-norm. We also assume that  $\ell<\ell_O= {\rm dist}(O,\partial\Gamma)$.
We first note that the properties that (i) $\| f^\ve\|_\infty$ is bounded uniformly in $\ve$ and (ii) 
$f^\ve\rightarrow f$ in $L_{loc}^p(\Gamma)$ for $1\leq p<\infty$ as 
$\ve\rightarrow 0$ for the basic fields $f^\ve=u^\ve,$ $\vr^\ve,$ $\bu^\ve$ immediately implies that the 
same is true for simple product functions such as $\bj^\ve=\vr^\ve\bu^\ve,$ $\vr^\ve |\bu^\ve|^2,$
$\vr^\ve |\bu^\ve|^2\bu^\ve$, etc. For compositions $h^\ve:=h(u^\ve,\vr^\ve)$ with thermodynamic functions
such as $h=T,$ $p,$ $\mu$, $\eta,$ $\zeta,$ $\kappa$ we need the precise Assumption 
\ref{smoothAss} on smoothness of $h$ with $M=1$. Of course, ${\mathcal R}^\ve,{\mathcal R}\subset K$ 
for $\ve<\ve_0,$ so that $\|h^\ve\|_\infty$ is bounded uniformly for $\ve<\ve_0$ and $\|h\|_\infty$ satisfies
the same bound. Furthermore, we can write
\begin{eqnarray}
&& h(u^\ve(X),\vr^\ve(X))-h(u(X),\vr(X)) \cr
&& \hspace{40pt} =  \vec{\partial} h(u_*,\vr_*)\cdot (u^\ve(X)-u(X),\vr^\ve(X)-\vr(X)),
\end{eqnarray}  
where $(u_*,\vr_*)$ is on the line segment between $(u^\ve(X),\vr^\ve(X))$ and $(u(X),\vr(X))$. 
Since $(u_*,\vr_*)\in K,$ then, by Assumption \ref{smoothAss}, the 2-vector $\ell_q$-norm $|\vec{\partial} h(u_*,\vr_*)|_q$ 
with $q=p/(p-1)$ is bounded 
by the maximum value $C_{h,q}$ of $|\vec{\partial} h|_q$ on $K.$ It thus follows 
easily that 
\be  \|h(u^\ve,\vr^\ve)-h(u,\vr)\|_p\leq C_{h,q}[\| u^\ve-u\|_p^p+ \|\vr^\ve-\vr\|_p^p]^{1/p}, \ee 
so that $h^\ve=h(u^\ve,\vr^\ve)$ also satisfies $\|h^\ve-h\|_p\rightarrow 0$ for the same $p$ as $\ve\rightarrow 0.$   
Thus $h^\ve\rightarrow h$ in $L_{loc}^p(\Gamma)$. Next note from the identity (\ref{CGgrad}) that 
\be
\frac{\partial}{\partial X_k} \ol{(f^\ve-f)}_\ell(X) 
  = -\frac{1}{\ell} \int \rmd^{d+1} R \ \left(\frac{\partial\cG}{\partial R_k}\right)_\ell(R-X) (f^\ve(R)-f(R)), 
\ee 
Hence, for each $X,$ 
\be |\partial_k \ol{(f^\ve-f)}_\ell(X)|\leq (c_{\ell,p}/\ell)\|f^\ve-f\|_p \ee
with $c_{\ell,p}=\|(\partial\cG)_\ell\|_q$ for $q=p/(p-1)$ and thus $\partial_k \ol{(f^\ve)}_\ell(X)\rightarrow \partial_k \ol{f}_\ell$
as $\ve\rightarrow 0$ whenever $f^\ve\rightarrow f$ in $L_{loc}^p(\Gamma)$. Applying this result with $f=\vr,$ $\bj,$ $\bj \bu,$
$p,$ $E,$ $(E+p)\bu,$ we get that pointwise in space-time 
 \begin{eqnarray} \label{convCGrho}
\partial_t \ol{\vr^\ve}_\ell +\nabla_x\cdot\ol{\nj}^\ve_\ell &\longrightarrow& \partial_t \ol{\vr}_\ell + \nabla_x\cdot\ol{\,\nj\,}_\ell  ,\\
 \label{convCGj}
\partial_t \ol{\,\nj\,}^\ve_\ell + \nabla_x \cdot \left( \ol{(\bj^\ve  \bu^\ve)}_\ell +  \ol{p}^\ve_\ell \bI \right) & \longrightarrow &
\partial_t \ol{\,\nj\,}_\ell + \nabla_x \cdot \left( \ol{(\bj  \bu)}_\ell +  \ol{p}_\ell \bI \right) ,\\
\label{conCGTE}
\partial_t \ol{E}_\ell^\ve + \nabla_x \cdot \left( \ol{((E^\ve+p^\ve)\bu^\ve)}_\ell\right) &\longrightarrow &
\partial_t \ol{E}_\ell + \nabla_x \cdot \left( \ol{((E+p)\bu)}_\ell\right) ,
\end{eqnarray}
as $\ve\rightarrow 0.$ The coarse-grained Euler equations 
 \begin{eqnarray} \label{E-eq:CGrho}
\partial_t \ol{\vr}_\ell &+& \nabla_x\cdot\ol{\,\nj\,}_\ell=0  ,\\
 \label{E-eq:CGj}
\partial_t \ol{\,\nj\,}_\ell &+& \nabla_x \cdot \left( \ol{(\bj  \bu)}_\ell +  \ol{p}_\ell \bI \right) =\bzed,\\
\label{E-eq:CGTE}
\partial_t \ol{E}_\ell &+& \nabla_x \cdot \left( \ol{((E+p)\bu)}_\ell\right) =0,
\end{eqnarray}
follow for $u,$ $\vr,$ $\bu$ if $\nabla_x\cdot\ol{(\bT^\ve)}_\ell,$ $\nabla_x\cdot\ol{(\bT^\ve\cdot\bu^\ve)}_\ell,$
and $\nabla_x\cdot\ol{(\bq^\ve)}_\ell$ all vanish as $\ve\rightarrow 0.$ 

We first consider the shear-viscosity contribution to $\nabla\cdot\ol{(\bT^\ve)}_\ell.$
With the shorthand notation $\eta^\ve(X) :=\ve \eta(u^\ve(X),\vr^\ve(X)),$ we can bound 
this using Cauchy-Schwartz  inequality as 
\begin{eqnarray} 
&& \left|\nabla_x\cdot \overline{(2\eta^\ve\bS^\ve)}_\ell(X)\right| = \frac{2}{\ell}\left|\int \rmd^{d+1}R\ (\nabla_x \cG)_\ell(R) \cdot 
\eta^\ve(X+R) \bS^\ve(X+R)\right| \cr 
&& \qquad \leq  \frac{2}{\ell} \sqrt{\int_{{\rm supp(\cG_\ell)}} \rmd^{d+1}R\  \eta^\ve(X+R) 
\times \int |(\partial \cG)_\ell(R-X)|^2 \ Q_\eta^\ve(dR)},\cr
&& \label{CS-etaS} \end{eqnarray} 
with $Q_\eta^\ve(\rmd R) = 2\eta^\ve(R) |\bS(R)|^2 \rmd^{d+1}R$ denoting the kinetic-energy dissipation 
measure for $\ve>0.$ Finally, because $Q_\zeta^\ve\geq 0,$
\be \left|\nabla_x\cdot \overline{(2\eta^\ve\bS^\ve)}_\ell(X)\right|
\leq  \frac{2}{\ell} \sqrt{\int_{{\rm supp(\cG_\ell)}}\!\! \!\!\!\rmd^{d+1}R\  \eta^\ve(X+R) 
\times\!\! \int |(\partial \cG)_\ell(R-X)|^2 \ Q^\ve(dR)} \ee
with $Q^\ve=Q_\eta^\ve+Q_\zeta^\ve.$ Since $\cG_\ell\in D(\Gamma)$ implies that $S_{X}|\partial \cG_\ell|^2\in D(\Gamma)$ 
also whenever ${\rm dist}(X,\partial\Gamma)<\ell$, then 
\be
\lim_{\ve\rightarrow 0}\int  |(\partial \cG)_\ell(R-X)|^2 \, Q^\ve(dR)
=  \int  |(\partial \cG)_\ell(R-X)|^2 \, Q(dR) 
\ee 
by Assumption \ref{assum1b}.  On the other hand, because $\eta(u^\ve,\vr^\ve)\in L^\infty(\Gamma)$
when $\eta$ satisfies the smoothness Assumption \ref{smoothAss} with $M=0,$ then the upper
bound in (\ref{CS-etaS}) is proportional to $\ve^{1/2}.$ Thus, $\nabla_x\cdot \overline{(2\eta^\ve\bS^\ve)}_\ell(X)
\to 0$ as $\ve\to 0$ for $\ell>{\rm dist}(X,\partial\Gamma)$. An identical argument using $Q_\eta^\ve\geq 0$ 
shows that likewise $\nabla_x\overline{(\zeta^\ve\Theta^\ve)}_\ell(X)\to 0$
as $\ve\to 0,$ and both results together imply that $\nabla\cdot\ol{(\bT^\ve)}_\ell\to 0$ pointwise. 

In a similar manner, the shear-viscosity contribution to $\nabla_x\cdot\ol{(\bT^\ve\cdot\bu^\ve)}_\ell$ can be 
bounded as 
\begin{eqnarray} 
&& \left|\nabla_x\cdot \overline{(2\eta^\ve\bS^\ve\cdot\bu^\ve)}_\ell(X)\right|\cr
&& \qquad \qquad= \frac{2}{\ell}\left|\int \rmd^{d+1}R\ (\nabla_x \cG)_\ell(R) \cdot 
\eta^\ve(X+R) \bS^\ve(X+R)\cdot\bu^\ve(X+R)\right| \cr 
&& \qquad \qquad\leq  \frac{2}{\ell} \sqrt{\int_{{\rm supp(\cG_\ell)}} \rmd^{d+1}R\  
   \eta^\ve(X+R)|\bu^\ve(X+R)|^2}\cr
&&    \hspace{150pt} \times \sqrt{\int |(\partial G)_\ell(R-X)|^2 \ Q^\ve(dR)},\cr
&& \label{CS-etaSv} \end{eqnarray} 
and an analogous bound holds for $\nabla_x\cdot \overline{(2\zeta^\ve\Theta^\ve\bu^\ve)}_\ell.$ Thus, by 
Assumption \ref{assum1b} $\nabla_x\cdot\ol{(\bT^\ve\cdot\bu^\ve)}_\ell\to 0$ pointwise as $\ve\to 0.$

Finally, $\nabla_x\cdot\ol{(\bq^\ve)}_\ell=-\nabla\cdot\ol{(\kappa^\ve\nabla_xT^\ve)}_\ell$ and the 
entropy-production measure due to thermal conductivity is defined by $\Sigma_\kappa^\ve(\rmd R)=
\kappa^\ve(R) \left|  \frac{\nabla_x T^\ve(R)}{T^\ve(R)}\right|^2\rmd^{d+1}R$ for $\ve>0.$ 
Because $Q^\ve/T^\ve\geq 0,$ thus $\Sigma_\kappa^\ve\leq \Sigma^\ve.$  Writing 
$\kappa^\ve\nabla_xT^\ve= \sqrt{\kappa^\ve}  T^\ve \cdot \sqrt{\kappa^\ve}\frac{\nabla_xT^\ve}{T^\ve}$ and using 
a Cauchy-Schwartz estimate similar to (\ref{CS-etaSv}), it follows from the convergence 
$\Sigma^\ve\Dto \Sigma$ in Assumption \ref{assum1b} that $\nabla_x\cdot\ol{(\bq^\ve)}_\ell\to 0$
pointwise as $\ve\to 0$ for $\ell>{\rm dist}(X,\partial\Gamma)$.

In conclusion, the coarse-grained Euler equations (\ref{E-eq:CGrho})--(\ref{E-eq:CGTE}) 
hold for all $X$ with ${\rm dist}(X,\partial\Gamma)<\ell$ and for all $\ell>0.$
By Proposition \ref{CGequiv} in section \ref{sec:CGWS}, we have thus proved that $(u,\vr,\bu)$ form a weak Euler solution. 
As an aside, we note that it would clearly suffice for this statement to have in Assumption \ref{assum1b} 
only the condition on entropy-production $\Sigma^\ve\Dto \Sigma$ and not the additional 
assumption $Q^\ve\ \Dto \ Q$.  If in Theorem \ref{Onsager1NS} only the statement (\ref{eq:s*}) on entropy balance were made, 
then this would be more economical in terms of hypotheses. However, to derive the balance equations (\ref{eq:KE*}) and (\ref{eq:IE*}) 
we need the additional convergence statement in Assumption \ref{assum1b} for $Q^\ve$ as we now show.  

To derive the balance equations of kinetic energy, internal energy and entropy for the weak Euler solutions, 
we start with the corresponding eqs.(\ref{eq:KE}),(\ref{eq:IE}),(\ref{eq:s}) for compressible Navier-Stokes. 
Then, because the basic fields $u^\ve,$ $\vr^\ve,$ $\bu^\ve$ and 
their compositions with functions $h^\ve:=h(u^\ve,\vr^\ve)$ satisfying the smoothness assumptions 
converge strongly in $L_{loc}^p$ for some $1\leq p<\infty$ to the corresponding  fields $u,$ $\vr,$ $\bu$ and $h(u,\vr),$ it follows directly that 
\begin{eqnarray} \label{lim-eq:KE}
\nonumber
 \partial_t \left(\frac{1}{2}\vr^\ve |\bu^\ve|^2\right) + \nabla_x \cdot \bigg(\bigg(p^\ve \!\! &+&\! \!\frac{1}{2}\vr^\ve |\bu^\ve|^2 \bigg)\bu^\ve\bigg)  \\
\nonumber
&\Dto &  \partial_t \left(\frac{1}{2}\vr |\bu|^2\right) + \nabla_x \cdot \left(\left(p+\frac{1}{2}\vr |\bu|^2 \right)\bu\right) , \\  
\label{eps-eq:IE}
\nonumber
\partial_t  u^\ve + \nabla_x \cdot \left( u^\ve\bu^\ve\right) &\Dto &  \partial_t  u + \nabla_x \cdot (u\bu)  ,\\ 
\label{lim-eq:s}
\partial_t s^\ve+ \nabla_x \cdot \left(s^\ve \bu^\ve\right)  &\Dto& \partial_t s+ \nabla_x \cdot (s\bu) .
\end{eqnarray}
To see that 
$$ \nabla_x\cdot\left(\bT^\ve\cdot \bu^\ve \right), \ \nabla_x\cdot \bq^\ve, \ 
\nabla_x \cdot \left(\frac{\bq^\ve}{T^\ve} \right) \Dto 0, $$
note that this is equivalent to $\nabla_x\ol{(\bT^\ve\cdot \bu^\ve)}_\ell,$ $\nabla_x\ol{\bq^\ve}_\ell,$
$\ol{(\bq^\ve/T^\ve)}_\ell\to 0$ pointwise. This has already been proved for the first two, and is shown 
for the third by a very similar Cauchy-Schwartz argument by writing $\bq^\ve/T^\ve=-\sqrt{\kappa^\ve}\cdot \sqrt{\kappa^\ve}\nabla_x T^\ve/T^\ve.$

Because of the condition $\Sigma^\ve\Dto\Sigma$ in Assumption \ref{assum1b}, all of the terms in the Navier-Stokes
entropy balance (\ref{eq:s}) converge distributionally and thus one obtains in the limit $\ve\to 0$ the entropy
balance (\ref{eq:s*}) for the weak Euler solution. Similarly, because of the condition $Q^\ve\ \Dto \ Q$ in Assumption 
\ref{assum1b}, all of the terms in the Navier-Stokes kinetic energy and internal energy balances 
(\ref{eq:KE}),(\ref{eq:IE}) are proved to converge distributionally, except $p^\ve \Theta^\ve$.  Thus, this term 
also converges 
\begin{eqnarray*}
 \Dlim_{\ve\rightarrow 0} p^\ve \Theta^\ve &= &
 \partial_t \left(\frac{1}{2}\vr |\bu|^2\right) + \nabla_x \cdot \left(\left(p+\frac{1}{2}\vr |\bu|^2\right) \bu \right) +Q\cr
 &=& Q- [\partial_t  u + \nabla_x \cdot \left( u\bu\right) ].
\end{eqnarray*} 
With the notation 
$ p*\Theta :=\Dlim_{\ve\rightarrow 0} p^\ve \Theta^\ve $
we thus obtain the balances (\ref{eq:KE*}),(\ref{eq:IE*}) of kinetic and internal energy for the limiting weak 
Euler solution.  \hfill $\Box$


 \section{Proof of Theorem \ref{Onsager2}}\label{sec:proofThm2}

The strategy to prove Theorem 2 is to use the commutator estimates developed in Section \ref{sec:proofs} to show that 
$Q_{\rm flux}$ and $\Sigma_{\rm flux}$ vanish when the Euler 
solutions possess suitable Besov regularity.   Then, we use the ``inertial-range'' expressions (\ref{compressible45ths}) to show the dissipation 
measures $Q$ and $\Sigma$ also vanish, and that $p*\Theta = p\circ \Theta$. 
We again make implicit the open set $O\subset\subset \Gamma$, let $\|\cdot\|_p$ represent the $L_p(O)$-norm, 
and assume that $\ell<\ell_O= {\rm dist}(O,\partial\Gamma)$.
\vspace{2mm}

\noindent \emph{Energy Flux:} We first show that $Q_{\rm flux}$ defined by (\ref{QcascadeDefects}),(\ref{eq:Quell}) necessarily exists and vanishes for weak Euler solutions satisfying the exponent inequalities (\ref{sigineq1})--(\ref{sigineq3}). To show this, simple bounds can be derived for $Q_\ell^{\rm flux}$ 
using the expressions (\ref{favrev}), (\ref{favreStress}) and Propositions \ref{bndTau} and \ref{bndTauder}. 
One obtains 
$$\|(1/\ol{\vr}_\ell)\nabla_x\ol{p}_\ell \cdot \ol{ \tau}_\ell(\vr, \bu)\|_{p/3} =\cO\left( \|1/\vr\|_\infty \frac{1}{\ell}\|\delta p(\ell)\|_p
\|\delta\vr(\ell)\|_p\|\delta\bu(\ell)\|_p\right), \quad\  p\geq 3,$$
\vspace{-4mm}
$$\|\nabla_x \tilde{\bu}_\ell\|_{p}=\frac{1}{\ell}\|\delta\bu(\ell)\|_p \left[ \cO(1) + \cO(\|1/\vr\|_\infty\|\vr\|_\infty)
+ \cO(\|1/\vr\|_\infty^2\|\vr\|_\infty^2)\right], \quad \  p\geq 1 ,$$
$$\|\tilde{ \tau}_\ell(\bu,\bu)\|_{p/2}=\|\delta\bu(\ell)\|_p^2 \left[ \cO(1) + \cO(\|1/\vr\|_\infty\|\vr\|_\infty)
+ \cO(\|1/\vr\|_\infty^2\|\vr\|_\infty^2)\right], \  p\geq 2, $$
and thus 
\be \|Q_\ell^{\rm flux}\|_{p/3} = \cO\left(\frac{1}{\ell}\|\delta p(\ell)\|_p
\|\delta\vr(\ell)\|_p\|\delta\bu(\ell)\|_p\right)+ \cO\left( \frac{\|\delta\bu(\ell)\|_p^3 }{\ell}  \right), \quad\quad\quad\  p\geq 3. \label{Qflux-est} \ee
In this latter estimate we absorb the dependence upon the maximum-to-minimum mass ratio $\|1/\vr\|_\infty\|\vr\|_\infty$ 
into the constant factor, since this ratio is $\ell$-independent. Assuming the Besov regularity of $u,$ $\vr,$ $\bu$ 
in Theorem \ref{Onsager2} and using Lemma \ref{thermobnd} to get the Besov regularity of $p$, one thus obtains 
$$ \|Q_\ell^{\rm flux}\|_{p/3} = \cO\left(\ell^{\min\{\sigma_p^u,\sigma_p^\vr\}+\sigma_p^\vr+\sigma_p^v-1}\right)+ \cO\left(\ell^{3\sigma_p^v-1}\right),
\quad p\geq 3. $$
It follows that 
$$ 2\min\{\sigma_p^u,\sigma_p^\vr\}+\sigma_p^v>1, \ 3\sigma_p^v>1, \mbox{ for some } \ p\geq 3
\Longrightarrow  \Dlim_{\ell\to 0} Q_\ell^{\rm flux}=0. $$

This is enough to infer the first statement of Theorem \ref{Onsager2} that $Q_{\rm flux}$ exists and vanishes for weak Euler solutions, but not enough to conclude that the viscous anomaly vanishes, $Q=0.$   
Recall by (\ref{compressible45ths}) that
\be Q = Q_{\rm flux} + \tau(p,\Theta).
 \ee 
 Therefore, with the exponent inequalities assumed above, we can only conclude
\be Q=\tau(p,\Theta) := p*\Theta - p\circ \Theta. \label{Q-taupTheta} \ee
In order to show that $Q=0,$ we must make use of the entropy balance, which we consider next. 
\vspace{2mm}

\noindent \emph{ Entropy Anomaly}:
We show that $\Sigma_{\rm flux}$ defined by (\ref{cascadeDefects}) necessarily exists and vanishes for weak Euler solutions satisfying the exponent inequalities (\ref{sigineq1})--(\ref{sigineq3}).   To accomplish this, we next derive bounds on $\Sigma_\ell^{\rm inert *}$ using (\ref{E-eq:Isell})--(\ref{E-eq:Lambell})
and Propositions \ref{bndTau}, \ref{bndTauder}, \ref{hessThermo}, and \ref{derivThermo}. 
Expression (\ref{E-eq:Isell}) and Propositions \ref{bndTauder}, \ref{hessThermo} give:
$$ \|I_\ell^{\rm flux}\|_{p/3}=\cO\left( \frac{1}{\ell}\max\{\|\delta u(\ell)\|_p,\|\delta\vr(\ell)\|_p\}^2\|\delta\bu(\ell)\|_p\right).$$
Expression (\ref{E-eq:Lambell}) and Propositions \ref{bndTau}, \ref{derivThermo} give:
\begin{eqnarray}
\|\Sigma_\ell^{\rm flux}\|_{p/3}&=&\cO\left(\|\nabla_x\ul{\beta}_\ell\|_p
\|\delta u(\ell)\|_p\|\delta\bu(\ell)\|_p\right)+\cO\left(\|\nabla_x \ul{\lambda}_\ell\|_p 
\|\delta\vr(\ell)\|_p\|\delta\bu(\ell)\|_p\right)\cr 
&=& \cO\left( \frac{1}{\ell}\max\{\|\delta u(\ell)\|_p,\|\delta\vr(\ell)\|_p\}^2\|\delta\bu(\ell)\|_p\right),
\end{eqnarray}
while Propositions \ref{bndTau}, \ref{derivThermo} give for the added terms to $\Sigma_\ell^{\rm flux *}$ in 
(\ref{E-eq:Lambell*}) the estimates 
\begin{eqnarray}\nonumber
\|\partial_t \ul{\beta}_\ell k_\ell\|_{p/3} &=& \cO\left(\|\partial_t\ul{\beta}_\ell\|_p\|\delta\bu(\ell)\|_p^2\right)
=\cO\left( \frac{1}{\ell}\max\{\|\delta u(\ell)\|_p,\|\delta\vr(\ell)\|_p\}\|\delta\bu(\ell)\|_p^2\right),\\\nonumber
\| \nabla_x\ul{\beta}_\ell \cdot \bJ_\ell^k \|_{p/3}&=&\cO\left(\|\nabla_x\ul{\beta}_\ell\|_p\|\delta\bu(\ell)\|_p^2\right)
=\cO\left( \frac{1}{\ell}\max\{\|\delta u(\ell)\|_p,\|\delta\vr(\ell)\|_p\}\|\delta\bu(\ell)\|_p^2\right).
\end{eqnarray}
To estimate $k_\ell$ and $\bJ_\ell^k$ we here used the expressions (\ref{favrev}) for $\tilde{\bu}_\ell,$
(\ref{favreStress}) for $\tilde{\tau}_\ell(\bu,\bu)$ and the similar expression for $\tilde{\tau}_\ell(\bu,\bu,\bu)$
that follows from (\ref{favreTau3}). Assuming the Besov regularity of $u,$ $\vr,$ $\bu$ 
in Theorem \ref{Onsager2}, one thus obtains from these estimates and the estimate of $\ul{\beta}_\ell Q_\ell^{flux}$
using (\ref{Qflux-est}) that for any $p\geq 3$
$$ \|\Sigma_\ell^{\rm inert *}\|_{p/3}= 
\cO\left(\ell^{2\min\{\sigma_p^u,\sigma_p^\vr\}+\sigma_p^v-1}\right)
+\cO\left(\ell^{\min\{\sigma_p^u,\sigma_p^\vr\}+2\sigma_p^v-1}\right)
+\cO\left(\ell^{3\sigma_p^v-1}\right). $$
The inequalities (\ref{sigineq1})--(\ref{sigineq3}) thus imply that $\Sigma_\ell^{\rm inert *}\to 0$ strong in $L_{loc}^{p/3}$
as $\ell\to 0$ for the same choice of $p\geq 3.$ Because of (\ref{compressible45ths}), 
it follows that the non-ideal entropy production also vanishes $\Sigma\equiv 0.$
\vspace{2mm}

\noindent \emph{Viscous Energy Dissipation Anomaly:}  We now show that $\Sigma=0$ implies that $Q=0.$ First note 
$$
\Sigma^\ve\geq \beta^\ve  Q^\varepsilon \geq  Q^\ve/ \|T^\ve\|_\infty.
$$
Because $\|T^\ve\|_\infty$ by Assumption \ref{assum1} is bounded by some constant $T_0$ uniformly in $\ve<\ve_0,$
we thus find that 
$$
\Sigma^\ve\geq Q^\ve/T_0 \geq 0, \quad \ve<\ve_0, 
$$
and one obtains in the limit $\ve\to 0$  that 
$$ 0= \Sigma\geq Q/T_0 \geq 0. $$
Thus, the inequalities (\ref{sigineq1})--(\ref{sigineq3}) in Theorem \ref{Onsager2} for some $p\geq 3$
imply also $Q\equiv 0$. 
\vspace{2mm}

\noindent \emph{Pressure-Dilatation Defect:}  
Lastly, the result $Q=\tau(p,\Theta)$ in (\ref{Q-taupTheta}) together with $Q\equiv 0$ implies that $p*\Theta=p\circ\Theta,$ as was claimed.  \hfill $\Box$


\section{Proof of Theorem \ref{timeReg}}\label{sec:proofThm3}

We derive Theorem \ref{timeReg} from a result for more general balance equations (\ref{gen-bal}). We consider 
cases where $\bv\in L^\infty(\Omega \times (0,T);\mathbb{R}^m),$ so that ${\mathcal R}={\rm ess.ran.}(\bv)$
is a compact subset of $\mathbb{R}^m$ with $K={\rm conv}({\mathcal R})$ also compact, and $\bF=\bF(\bv)$ is 
a $C^1$ function on an open set $U,$ $K\subset U\subset \mathbb{R}^m.$ Furthermore, the individual
components of $F_{ia}$ of $\bF$ for $i=1,\dots,d$ and $a=1,\dots,m$ may not depend upon all of the components
$u_a,$ $a=1,\dots,m$ of $\bv$ but only upon a subset. We assume that for each $a=1,\dots,m$  the $d$-vector
$\bF_a=(F_{1a},\dots,F_{da})$ is a function of the form
\be \bF_a(\bv)= \tilde{\bF}_a(u_{b^{(a)}_1},\dots,u_{b^{(a)}_{m_a}}), \quad a=1,\dots, m \label{dep-ass} \ee 
where the subset $\mathbb{M}_a=\{b^{(a)}_1,\dots,b^{(a)}_{m_a}\}\subset \{1,\dots,m\}$ has cardinality $m_a\leq m,$ and thus $\bF_a$ is constant in the 
variables $u_{b}$ for $b\notin \mathbb{M}_a$

We then have the following general result:

\begin{customthm}{4*}\label{GentimeReg}
Suppose that $\bv\in L^\infty(\Omega \times (0,T);\mathbb{R}^m)$ is a weak solution of (\ref{gen-bal}) where 
$\bF\in C^1(U)$ with $U$ open and ${\rm conv}({\rm ess.ran.}(\bv))\subset U\subset \mathbb{R}^m,$ and that 
also $\bF_a$ satisfies the condition (\ref{dep-ass}) for each $a=1,\dots,m.$ If for some $p\geq 1$
\be\label{regassFi}
 u_a\in L^\infty((0,T); B_{p,loc}^{\sigma_p^a,\infty}(\Omega)), \quad 0< \sigma_p^a\leq 1; \qquad a=1,\dots,m,
\ee
where the above spaces are defined by (\ref{Linfty_into_BesovLoc}), then 
\be\label{fspacetimeReg}
 u_a\in B_{p,loc}^{\bar{\sigma}_p^a ,\infty}(\Omega \times (0,T)), \quad \bar{\sigma}_p^a= \min\{\sigma_p^a,\min_{b\in \mathbb{M}_a}\sigma_p^{b}\};
 \qquad a=1,\dots,m. 
\ee
\end{customthm}

\begin{proof}
We use the notation $\Gamma=\Omega \times (0,T)$ and $R=(\br,\tau)\in \Gamma.$
Since $L^\infty(\Gamma)\subset L_{loc}^p(\Gamma)$ and
$p\geq 1,$ we must only bound the requisite $L^p(O)$-norm in the definition (\ref{besov-def}) of the local 
space-time Besov norm for any open $O\subset\subset \Gamma$. For $R=(\br,\tau)$ with $|R|<R_O={\rm dist}(O,\partial\Gamma),$
Minkowski's inequality gives:
\be\label{splitSpaceTime}
\| u_a(\cdot+R) - u_a\|_{L^p(O)}\leq \| u_a(\cdot, \cdot+\tau ) - u_a\|_{L^p( O')}+  \| u_a(\cdot+\br,\cdot) - u_a \|_{L^p(O)}
\ee
where $O'=S_\br O := \{(\bx+{\br},t) :  (\bx,t)\in O\}\subset\subset\Gamma$. 
The assumed uniform regularity (\ref{regassFi})  guarantees that $\| u_a(\cdot+\br,\cdot) - u_a \|_{L^p(O)}=\cO(|\br|^{\sigma_p^a})$. 
To estimate the time-increment term, fix an $0<\ell\leq |\tau|$ and decompose $\bv= \hat{\bv}_\ell+\bv'_\ell$ 
with $\hat{\bv}_\ell=\bv*\check{G}_\ell$ for a spatial mollifier $G_\ell$. Applying Minkowski's inequality again,
\begin{eqnarray}\nonumber
\| u_a(\cdot, \cdot+\tau ) - u_a\|_{L^p(O')} &\leq& \| \hat{u}_{a,\ell}(\cdot, \cdot+\tau )  - \hat{u}_{a,\ell}\|_{L^p(O')}\\
&& \hspace{10mm} +  \| u_{a,\ell}'(\cdot, \cdot+\tau ) - u_{a,\ell}'\|_{L^p(O')}.  \label{timeIncBnd} 
\end{eqnarray}
In order to estimate these terms, it is convenient to assume that $O=O_r\times O_t,$ a space-time product of open sets, 
and thus $O'=O_r'\times O_t'$ as well.  It clearly suffices to consider product sets, because any other pre-compact 
open set can be strictly included in such a product set.  Since $\partial_t u_a+\nabla_x\cdot\bF_a=0$ is satisfied in the sense 
of distributions or, equivalently, pointwise after space-time mollification (see Proposition \ref{CGequiv}),  standard approximation 
arguments show:
  \begin{eqnarray}\nonumber
 \| \hat{u}_{a,\ell}(\cdot, \cdot+\tau ) - \hat{u}_{a,\ell}\|_{L^p(O'_r\times O'_t)} &\leq& |\tau|   \|\nabla_x \cdot\hat{ \bF}_{a,\ell}\|_{L^\infty(O'_t ; L^p(O'_r))}\\ \nonumber
 &=& \cO(\ell^{\mu_p^a -1}|\tau| ),  \quad \mu_p^a=\min_{b\in\mathbb{M}_a}\sigma_p^b. \label{timeIncBnd2}
 \end{eqnarray}
Here we have used the inherited spatial Besov regularity of ${\bold F_a}$ with exponent $\mu_p^a$, which follows 
from a straightforward generalization of Lemma \ref{thermobnd}, and the spatial version of 
Proposition \ref{bndTauder}. 
On the other hand, the term involving the fluctuation fields can be bounded using the spatial analogue of Proposition \ref{lem:fluc} as:
\be\label{timeIncBnd3}
\| u_{a,\ell}'(\cdot, \cdot+\tau ) - u_{a,\ell}'\|_{L^p(O'_r\times O'_t))}\leq 2\|u_{a,\ell}'\|_{L^\infty(O_t'), L^p(O'_r)) } =  \cO(\ell^{\sigma_p^a}).
 \ee
 From equations (\ref{timeIncBnd})--(\ref{timeIncBnd3}) we obtain
  \begin{eqnarray}\label{BesovTimeIncBnd}
\| u_a(\cdot, \cdot+\tau ) - u_a\|_{L^p(O')} =\cO(\ell^{\mu_p^a -1}|\tau|)   +   \cO(\ell^{\sigma_p^a}).
\end{eqnarray}
Since $\ell\leq |\tau|<1$ by assumption, we increase the upper bound in (\ref{BesovTimeIncBnd}) by replacing both $\mu_p^a$ 
and $\sigma_p^a$ with their minimum, $\bar{\sigma}_p^a$, in (\ref{fspacetimeReg}).
The resulting bound is then optimized by choosing the arbitrary scale $\ell\leq  |\tau|$ to be $\ell \propto |\tau|$.  
Altogether,
  \begin{eqnarray}
  \label{timeAndSpaceIncBnd}
\| u_a(\cdot, \cdot+\tau ) - u_a\|_{L^p(O')} &=&\cO(|\tau|^{\bar{\sigma}_p^a} ), \\
  \| u_a(\cdot+\br,\cdot) - u_a \|_{L^p(O)}&=&\cO(|\br|^{\bar{\sigma}_p^a}).  \label{timeAndSpaceIncBnd2}
\end{eqnarray}
It follows from (\ref{splitSpaceTime}) and (\ref{timeAndSpaceIncBnd}),(\ref{timeAndSpaceIncBnd2}) that  $u_a\in B_{p,loc}^{\bar{\sigma}_p^a,\infty}(\Omega \times (0,T)).$
\hfill $\Box$ 
\end{proof}

\begin{proof}[Theorem \ref{timeReg}]
The result is proved as a corollary of Theorem \ref{GentimeReg},  
specialized to the compressible Euler system with $(u_0,u_1,\dots,u_d,u_{d+1}):=(\vr,j_1,\dots,j_d, E)$ and 
  \begin{eqnarray*}
  F_{i,0} &:=&  u_i, \\
  F_{i,j} &:=&   u_0^{-1} u_i u_j+ p(u, u_0)\delta_{ij}, \\
  F_{i,d+1} &:=&  \left( u_{d+1}+p(u,u_0)\right)u_0^{-1}u_i.
\end{eqnarray*}
for $i,j=1,\dots,d$ and $u:= u_{d+1}- \frac{u_1^2+\cdots+u_d^2}{2u_0}$. The assumed strict positivity of $\vr\geq\vr_0>0$, space-time boundedness 
of $\bv,$ and smoothness of $p$ implies that $ \bF$ possesses the requisite regularity. It follows that:
  \begin{eqnarray*}
\vr \in B_{p,loc}^{\min\{\sigma_p^{\vr},\sigma_p^{j}\},\infty}(\Omega \times (0,T)), \quad \bj, E\in B_{p,loc}^{\min\{\sigma_p^{\vr},\sigma_p^{j},\sigma_p^{E}\},\infty}(\Omega \times (0,T)), 
  \end{eqnarray*}
  Recalling that the fields $\bj$ and $E$ are algebraically related to $u,$ $\vr$, $\bu$ by $\bj :=\vr \bu$ and  $E:=\frac{1}{2}\vr |\bu|^2+u$, an application of Corollary \ref{prodBes} shows that we may take $\sigma_p^{j}= \min\{\sigma_p^{\vr},\sigma_p^{v}\}$ and $\sigma_p^{E}= \min\{\sigma_p^{u},\sigma_p^{\vr},\sigma_p^{v}\}$.  
 The inverse relations $\bu=\vr^{-1}\bj$ and $u=E-\vr^{-1}|\bj|^2$ and another application of Corollary \ref{prodBes} yields 
 the space-time regularity (\ref{thm3-1})--(\ref{thm3-3}) claimed in Theorem 3. 
\hfill $\Box$ \end{proof}

\begin{remark}
Theorem \ref{GentimeReg} applies also to solutions of the incompressible Euler equations with velocity $\bu$ and (kinematic) 
pressure $P$ satisfying $\bu,P\in L^\infty(\Gamma),$  for $\Gamma=\mathbb{T}^d\times (0,T).$
Assuming for $q\geq 1$ that $\bu\in L^\infty((0,T),B_q^{\sigma_q,\infty}(\mathbb{T}^d))$, 
elliptic regularization of the solutions of the Poisson equation 
$$ -\triangle P = \partial^2(v_iv_j)/\partial x_i\partial x_j$$
implies that  $P\in L^\infty((0,T),B_{q}^{\sigma_{q},\infty}(\mathbb{T}^d))$.  Alternatively, this regularity of $P$ follows
from boundedness of Calder\'on-Zygmund operators in Besov-space norms. 
Theorem \ref{GentimeReg} yields $\bu\in B_q^{\sigma_q,\infty}(\mathbb{T}^d \times (0,T)),$  so that $\bu$ is as regular in time as it is in space. 
\end{remark}


\begin{acknowledgements} 
We would like to thank Hussein Aluie for sharing his unpublished work. T.D. would like to thank Daniel Ginsberg for useful discussions.  Research of T.D. is partially supported by NSF-DMS grant 1703997 and a Fink Award from the Department of Applied Mathematics \& Statistics at the Johns Hopkins University.
\end{acknowledgements}

\bibliography{bibliography}

\providecommand{\noopsort}[1]{}\providecommand{\singleletter}[1]{#1}%
\begin{thebibliography}{41}
\providecommand{\natexlab}[1]{#1}
\providecommand{\url}[1]{\texttt{#1}}
\expandafter\ifx\csname urlstyle\endcsname\relax
  \providecommand{\doi}[1]{doi: #1}\else
  \providecommand{\doi}{doi: \begingroup \urlstyle{rm}\Url}\fi

\bibitem[{O}nsager(1949)]{Onsager49}
{O}nsager, L.
\newblock Statistical hydrodynamics.
\newblock \emph{Nuovo Cim. Suppl.}, VI:\penalty0 279--287, 1949.

\bibitem[Eyink(1994)]{eyink1994energy}
Eyink, G.~L.
\newblock Energy dissipation without viscosity in ideal hydrodynamics i.
  {F}ourier analysis and local energy transfer.
\newblock \emph{Physica D}, 78\penalty0 (3-4):\penalty0 222--240, 1994.

\bibitem[Constantin et~al.(1994)Constantin, Weinan, and Titi]{CET1994}
Constantin, P., Weinan, E., and Titi, E.~S.
\newblock {O}nsager's conjecture on the energy conservation for solutions of
  {E}uler's equation.
\newblock \emph{Commun. Math. Phys.}, 165\penalty0 (1):\penalty0 207--209,
  1994.

\bibitem[Duchon and Robert(2000)]{DuchonRobert2000}
Duchon, J. and Robert, R.
\newblock Inertial energy dissipation for weak solutions of incompressible
  {E}uler and {N}avier-{S}tokes equations.
\newblock \emph{Nonlinearity}, 13\penalty0 (1):\penalty0 249, 2000.

\bibitem[Eyink and Sreenivasan(2006)]{EyinkSreenivasan06}
Eyink, G.~L. and Sreenivasan, K.~R.
\newblock {{O}nsager and the theory of hydrodynamic turbulence}.
\newblock \emph{Rev. Mod. Phys.}, 78:\penalty0 87--135, 2006.

\bibitem[{De Lellis} and {Sz{\'e}kelyhidi, Jr.}(2010)]{DeLellisSzekelyhidi10}
{De Lellis}, C. and {Sz{\'e}kelyhidi, Jr.}, L.
\newblock On admissibility criteria for weak solutions of the {E}uler
  equations.
\newblock \emph{Arch. Ration. Mech. and Anal.}, 195:\penalty0 225--260, 2010.

\bibitem[De~Lellis and Sz{\'e}kelyhidi~Jr(2012)]{de2012h}
De~Lellis, C. and Sz{\'e}kelyhidi~Jr, L.
\newblock The h-principle and the equations of fluid dynamics.
\newblock \emph{Bull. Am. Math. Soc}, 49\penalty0 (3):\penalty0 347--375, 2012.

\bibitem[{Buckmaster}(2013)]{Buckmaster13}
{Buckmaster}, T.
\newblock {{O}nsager's conjecture almost everywhere in time}.
\newblock \emph{arXiv e-prints}, April 2013.

\bibitem[Isett(2016)]{isett2016proof}
Isett, P.
\newblock A proof of {O}nsager's conjecture.
\newblock \emph{arXiv preprint arXiv:1608.08301}, 2016.

\bibitem[Feireisl et~al.(2016)Feireisl, Gwiazda, {\'S}wierczewska-Gwiazda, and
  Wiedemann]{feireisl2016regularity}
Feireisl, E., Gwiazda, P., {\'S}wierczewska-Gwiazda, A., and Wiedemann, E.
\newblock Regularity and energy conservation for the compressible {E}uler
  equations.
\newblock \emph{arXiv preprint arXiv:1603.05051}, 2016.

\bibitem[Landau and Lifshitz(1987)]{LandauLifshitz87}
Landau, L. and Lifshitz, E.
\newblock \emph{Fluid Mechanics}.
\newblock Pergamon Press, 2nd edition, 1987.

\bibitem[de~Groot and Mazur(1984)]{deGrootMazur84}
de~Groot, S. and Mazur, P.
\newblock \emph{Non-equilibrium Thermodynamics}.
\newblock Dover Publications, 1984.

\bibitem[Gallavotti(2013)]{gallavotti2013foundations}
Gallavotti, G.
\newblock \emph{Foundations of {F}luid {D}ynamics}.
\newblock Springer Science \& Business Media, 2013.

\bibitem[Feireisl(2004)]{feireisl2004dynamics}
Feireisl, E.
\newblock \emph{Dynamics of {V}iscous {C}ompressible {F}luids}, volume~26.
\newblock Oxford University Press, 2004.

\bibitem[Feireisl and Novotn{\`y}(2013)]{feireisl2013inviscid}
Feireisl, E. and Novotn{\`y}, A.
\newblock Inviscid incompressible limits of the full
  {N}avier-{S}tokes-{F}ourier system.
\newblock \emph{Commun. Math. Phys.}, 321\penalty0 (3):\penalty0 605--628,
  2013.

\bibitem[Lions(1998)]{lions1998mathematical}
Lions, P.-L.
\newblock \emph{Mathematical Topics in Fluid Mechanics: Volume 2: Compressible
  Models}.
\newblock Oxford University Press, 1998.

\bibitem[Martin-L{\"o}f(1979)]{martin1979statistical}
Martin-L{\"o}f, A.
\newblock \emph{Statistical {M}echanics and the {F}oundations of
  {T}hermodynamics}.
\newblock Lecture {N}otes in {P}hysics. Springer-Verlag, 1979.

\bibitem[Ruelle(1999)]{ruelle1999statistical}
Ruelle, D.
\newblock \emph{Statistical Mechanics: Rigorous Results}.
\newblock World Scientific, 1999.

\bibitem[Callen(1985)]{callen1985thermodynamics}
Callen, H.
\newblock \emph{Thermodynamics and an Introduction to Thermostatistics}.
\newblock Wiley, 1985.

\bibitem[Evans(2004)]{evans2015entropy}
Evans, L.~C.
\newblock Entropy and partial differential equations, 2004.
\newblock URL \url{http://math.berkeley.edu/∼evans/entropy.and.PDE.pdf}.

\bibitem[Gilbarg and Trudinger(2015)]{gilbarg2015elliptic}
Gilbarg, D. and Trudinger, N.~S.
\newblock \emph{Elliptic {P}artial {D}ifferential {E}quations of {S}econd
  {O}rder}.
\newblock Springer, 2015.

\bibitem[Evans and Gariepy(2015)]{evans2015measure}
Evans, L.~C. and Gariepy, R.~F.
\newblock \emph{Measure {T}heory and {F}ine {P}roperties of {F}unctions}.
\newblock CRC press, 2015.

\bibitem[Rudin(1987)]{rudin1987real}
Rudin, W.
\newblock \emph{Real and Complex Analysis}.
\newblock McGraw-Hill, 1987.

\bibitem[Johnson(2014)]{johnson2014closed}
Johnson, B.~M.
\newblock Closed-form shock solutions.
\newblock \emph{J. Fluid Mech.}, 745:\penalty0 R1, 2014.

\bibitem[Eyink and Drivas(2017{\natexlab{a}})]{eyinkdrivascompr}
Eyink, G.~L. and Drivas, T.~D.
\newblock Cascades and dissipative anomalies in compressible fluid turbulence.
\newblock \emph{arXiv preprint arXiv:1704.03532}, 2017{\natexlab{a}}.

\bibitem[Kim and Ryu(2005)]{kim2005density}
Kim, J. and Ryu, D.
\newblock Density power spectrum of compressible hydrodynamic turbulent flows.
\newblock \emph{Astrophys. J. Lett}, 630\penalty0 (1):\penalty0 L45, 2005.

\bibitem[Oberguggenberger(1992)]{Oberguggenberger92}
Oberguggenberger, M.
\newblock \emph{Multiplication of Distributions and Applications to Partial
  Differential Equations}, volume 259 of \emph{Pitman research notes in
  mathematics series}.
\newblock Longman Scientific \& Technical, 1992.

\bibitem[Triebel(2006)]{triebel2006theory}
Triebel, H.
\newblock \emph{Theory of Function Spaces III}.
\newblock Birkh{\"a}user Basel, 2006.

\bibitem[Aluie(2013)]{aluie2013scale}
Aluie, H.
\newblock Scale decomposition in compressible turbulence.
\newblock \emph{Physica D}, 247\penalty0 (1):\penalty0 54--65, 2013.

\bibitem[Eyink and Drivas(2017{\natexlab{b}})]{eyinkdrivasrelat}
Eyink, G.~L. and Drivas, T.~D.
\newblock Cascades and dissipative anomalies in relativistic fluid turbulence.
\newblock \emph{arXiv preprint arXiv:1704.03541}, 2017{\natexlab{b}}.

\bibitem[Isett(2013)]{isett2013}
Isett, P.
\newblock Regularity in time along the coarse scale flow for the incompressible
  {E}uler equations.
\newblock \emph{arXiv preprint arXiv:1307.0565}, 2013.

\bibitem[Isett(2012)]{isett2012}
Isett, P.
\newblock H{\"o}lder continuous {E}uler flows in three dimensions with compact
  support in time.
\newblock \emph{arXiv preprint arXiv:1211.4065}, 2012.

\bibitem[Isett and Oh(2016)]{isett2014}
Isett, P. and Oh, S.-J.
\newblock On nonperiodic {E}uler flows with {H}\"older regularity.
\newblock \emph{Arch. Ration. Mech. Anal.}, 221\penalty0 (2):\penalty0
  725--804, 2016.

\bibitem[Ziemer(1989)]{Ziemer1989}
Ziemer, W.
\newblock \emph{{W}eakly {D}ifferentiable {F}unctions}.
\newblock Graduate Text in Mathematics 120, Springer-Verlag, 1989.

\bibitem[Showalter(2011)]{showalter2011hilbert}
Showalter, R.
\newblock \emph{Hilbert Space Methods in Partial Differential Equations}.
\newblock Dover Publications, 2011.

\bibitem[Rudin(2006)]{rudin2006functional}
Rudin, W.
\newblock \emph{Functional Analysis}.
\newblock McGraw-Hill, 2006.

\bibitem[Huang(2009)]{huang2009introduction}
Huang, K.
\newblock \emph{Introduction to {S}tatistical {P}hysics}.
\newblock CRC press, 2009.

\bibitem[Stuart and Ord(2009)]{stuart2009kendall}
Stuart, A. and Ord, K.
\newblock \emph{{K}endall's {A}dvanced {T}heory of {S}tatistics: {V}olume 1:
  {D}istribution {T}heory}.
\newblock Wiley, 2009.

\bibitem[Favre(1969)]{favre1969statistical}
Favre, A.
\newblock Statistical equations of turbulent gases.
\newblock In Lavrentiev, M.~A., editor, \emph{Problems of Hydrodynamics and
  Continuum Mechanics}, pages 37--44. SIAM, Philadelphia, 1969.

\bibitem[Eyink(2015{\natexlab{a}})]{eyinkRec2015}
Eyink, G.~L.
\newblock Turbulent general magnetic reconnection.
\newblock \emph{‎Astrophys. J}, 807\penalty0 (2):\penalty0 137,
  2015{\natexlab{a}}.

\bibitem[Eyink(2015{\natexlab{b}})]{eyinknotes}
Eyink, G.~L.
\newblock {T}urbulence {T}heory.
\newblock \emph{course notes}, 2015{\natexlab{b}}.
\newblock URL \url{http://www.ams.jhu.edu/~eyink/Turbulence/notes/}.

\end{thebibliography}
\end{document}